\theoremstyle{plain}
\newtheorem{mainthm}{Theorem}
\newtheorem{maincor}[mainthm]{Corollary}
\newtheorem{thm}{Theorem}[section]
\newtheorem{prop}[thm]{Proposition}
\newtheorem{cor}[thm]{Corollary}
\newtheorem*{conj*}{Conjecture}
\newtheorem{lemma}[thm]{Lemma}
\newtheorem{lem}[thm]{Lemma}
\newtheorem*{cor*}{Corollary}
\theoremstyle{definition}
\newtheorem{notation}[thm]{Notation}
\newtheorem{defn}[thm]{Definition}
\newtheorem{conv}[thm]{Convention}
\theoremstyle{remark}
\newtheorem{rmk}[thm]{Remark}
\newtheorem{example}[thm]{Example}
\newtheorem{stassumption}[thm]{Standing Assumption}
\newtheorem*{rmk*}{Remark}
\newlength{\plarg}
\newcommand{\gen}[1]{\langle #1 \rangle}
\newcommand{\pres}[2]{\langle #1 | #2 \rangle}
\newcommand{\bbZ}{\mathbb{Z}}
\newcommand{\ab}{\mathrm{ab}}
\newcommand{\pcl}[2]{\mathrm{pcl}_{#1}(#2)}
\newcommand{\lepair}{\le}
\newcommand{\embedpair}{\hookrightarrow}
\numberwithin{equation}{section}
\title{Virtual homological torsion in graphs of free groups with cyclic edge groups}
\author{Dario Ascari and Jonathan Fruchter}
\begin{document}

\begin{abstract}
    Let $G$ be a hyperbolic group that splits as a graph of free groups with cyclic edge groups. We prove that, unless $G$ is isomorphic to a free product of free and surface groups, every finite abelian group $M$ appears as a direct summand in the abelianization of some finite-index subgroup $G'\le G$. As an application, we deduce that free products of free and surface groups are profinitely rigid among hyperbolic graphs of free groups with cyclic edge groups. We also conclude that \emph{partial surface words} in a free group are determined by the word measures they induce on finite groups.
\end{abstract}

\maketitle

\tableofcontents

\section{Introduction} \label{intro}

This paper concerns virtual torsion in the abelianization of a large class of hyperbolic groups: those that split as graphs of free groups amalgamated along cyclic subgroups. This class plays a central role in geometric group theory, frequently serving as a testbed for new ideas and techniques. Notable examples include subgroup separability (Wise, \cite{wise:graph-sep}), cubulability and specialness (Hsu and Wise, \cite{wise:cube}), coherence and local quasiconvexity (Bigdely and Wise, \cite{wise:local}), and surface subgroups (Wilton \cite{Wil18}). \par \smallskip

These results can be seen as instances of group-theoretic analogues of major results in low-dimensional topology, especially within the study of (hyperbolic) $3$-manifolds. Coherence and separability echo Scott’s core theorem \cite{Scott1973}; cubulability and specialness parallel Agol and Wise’s work on the virtual Haken conjecture \cite{Wise2021, agol}; and surface subgroups reflect the results of Kahn and Markovic \cite{Kahn2012}. It is worth noting, however, that the proofs in the graphs-of-free-groups setting are often of a very different nature. They rely on combinatorial-geometric arguments, emphasize the local-global interplay between the entire graph of groups and its vertex groups, and are grounded in \emph{very} low-dimensional topology—namely, the topology of graphs. \par \smallskip

One of the greatest open challenges in the study of $3$-manifolds is the \emph{exponential torsion growth conjecture} \cite{Bergeron2012, Lck2002, Lck2013}, which predicts that, akin to L\"uck's approximation theorem for $L^2$-Betti numbers \cite[Theorem 0.1]{Lck1994}, the $L^2$-torsion of every irreducible $3$-manifold $M$ with empty or toroidal boundary can be approximated by the homological torsion in a cofinal tower of regular finite-sheeted covers. More explicitly, given such a $3$-manifold $M$, does there exist a cofinal tower of regular finite-sheeted covers 
\[\cdots \twoheadrightarrow M_n \twoheadrightarrow \cdots \twoheadrightarrow M_1 \twoheadrightarrow M_0 = M\]
for which
\[
\lim _n \frac{\ln \vert \mathrm{Tor}(H_1(M_n;\mathbb{Z}))\vert}{[\pi_1(M):\pi_1(M_n)]}=-\rho^{(2)}(\tilde{M}) = \frac{\mathrm{vol}(M)}{6\pi}\;?
\]
In fact, to the best of our knowledge, it is still an open question whether there exists a finitely presented group $G$, and an exhausting, nested sequence of finite-index normal subgroups $G_n$ of $G$, such that
\[
\lim_n \frac{\ln \vert \mathrm{Tor}(G_n^{\ab})\vert}{[G:G_n]}>0.
\]
Many efforts have been made towards answering these questions. One positive result in this direction is due to Sun \cite[Theorem 1.5]{Sun2015} (later generalized by Chu and Groves \cite[Theorem 1.1]{Chu2022} to a broader class of $3$-manifolds), who showed that for every closed, hyperbolic $3$-manifold $M$, every finite abelian group appears as a direct summand in the first homology of a finite-sheeted cover of $M$.\par \smallskip

We prove that a similar phenomenon occurs for hyperbolic groups that split as graphs of free groups with cyclic edge groups. Note that not all such groups virtually contain torsion in homology: the abelianizations of free groups, surface groups (fundamental groups of closed surfaces of non-positive Euler characteristic), and their free products—and all their finite-index subgroups—are always torsion-free. However, this is the only obstruction; beyond this case, not only does torsion appear, it does so in abundance:

\begin{mainthm} [\Cref{thm:main}] \label{mainthm}
    Let $G$ be a hyperbolic group that splits as a graph of free groups amalgamated along cyclic subgroups, and that is not isomorphic to a free product of free and surface groups. Then for every finite abelian group $M$, there exists a finite-index subgroup $H\le G$ such that $M$ is a direct summand of the abelianization $H^{{\ab}}$ of $H$.
\end{mainthm}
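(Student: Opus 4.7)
My approach is to analyze the given graph-of-groups splitting of $G$, locate a local configuration that witnesses the fact that $G$ is not a free product of free and surface groups, and then use this configuration to construct finite-index subgroups whose abelianization contains any prescribed finite abelian group as a direct summand.

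\textbf{Step 1: Reducing to a freely indecomposable non-surface case.} Using Grushko's theorem, decompose $G = G_1 \ast \cdots \ast G_k$ into freely indecomposable factors. A direct summand $M$ in the abelianization of a finite-index subgroup $H_i \le G_i$ lifts, via the natural graph-of-spaces covers, to a finite-index subgroup of $G$ with $M$ as an $\ab$-summand; hence it suffices to treat one freely indecomposable factor that is not free and not a closed surface group. By hypothesis such a $G$ still admits a non-trivial essential splitting as a graph of free groups with infinite cyclic edge groups, and we may take it to be minimal (no edge embedding is onto a vertex group).

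\textbf{Step 2: A surface/non-surface dichotomy.} The conceptual heart of the argument is to establish the following dichotomy: either (i) around every vertex $v$ the incident edge-group embeddings $c_e \mapsto w_{e,v} \in F_v$ can be packaged as the boundary-curve system of a surface-with-boundary structure on $F_v$, compatibly across edges, which forces $G$ to be a free product of free and surface groups and contradicts the hypothesis; or (ii) some vertex $v$ exhibits a \emph{non-surface feature}, meaning either an incident embedding $w_{e,v}$ is a proper power in $F_v$, or a combinatorial obstruction (detectable via Whitehead graphs or a JSJ-type analysis at $v$) prevents the incident embeddings from arising as boundary curves of any surface structure on $F_v$.

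\textbf{Step 3: From non-surface features to prescribed torsion.} For any finite-index subgroup $H \le G$, the induced splitting (via Bass--Serre theory) lets us compute $H^{\ab}$ as the cokernel of an explicit ``edge relation'' matrix: rows indexed by abelianization generators of the vertex groups of $H$, columns indexed by edges, entries determined by the lifts of the $w_{e,v}$. The torsion of $H^{\ab}$ is then read off from the Smith normal form. I would design $H$ so that the lift of the non-surface feature of Step 2 contributes a block to this matrix whose elementary divisors realize the invariant factors of the prescribed $M$, ensuring that $M$ appears as a direct summand. The coherent choice of finite quotients of the free vertex groups (matched along cyclic edge groups) provides the required flexibility; the non-surface feature guarantees that this block cannot be forced to a unimodular form, in contrast with the uniformly unimodular behavior responsible for the absence of torsion in the surface case.

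\textbf{Main obstacle.} I expect Step 2 to be the hardest: rigorously showing that local compatibility of boundary-curve systems at every vertex forces $G$ to be a free product of free and surface groups, and conversely extracting from a failure of surface compatibility a \emph{single} local obstruction robust under passage to finite covers. A careful treatment of rigid, quadratically hanging, and abelian vertices in the JSJ decomposition, together with a combinatorial analysis of Whitehead graphs governing how incident words can be realized as boundary curves, appears unavoidable; formulating a notion of non-surface feature that behaves well under finite covers is a prerequisite for engineering the matrix blocks in Step 3.
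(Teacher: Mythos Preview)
Your Step~1 and the general shape of Step~2 are in line with the paper: one reduces via Grushko to a one-ended factor that is not a surface group, and then a JSJ-type analysis yields a dichotomy (in the paper's normal form: either some cyclic vertex has valence $\ge 3$, or there is a rigid vertex).  However, Step~3 as written is not a proof strategy but a restatement of the goal, and it misses the two mechanisms that actually make the argument work.

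First, you never invoke the virtual retraction property.  The paper does \emph{not} construct the finite-index subgroup $H$ directly.  Instead, it builds a finitely generated (typically infinite-index) subgroup $K\le G$ with $M$ as a direct summand of $K^{\ab}$, and then uses that $G$ is virtually special and locally quasiconvex, hence admits local retractions, to promote $K^{\ab}$ to a direct summand of $G_0^{\ab}$ for some finite-index $G_0$ (\Cref{cor:vr-direct-factor}).  Your plan of ``coherently choosing finite quotients of the vertex groups'' to engineer the Smith normal form of a finite-index cover would have to control all vertex pieces simultaneously; by contrast, the retraction trick lets one work with a single hand-built precover whose homology is computed explicitly.

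Second, and more seriously, your ``non-surface feature'' is not sharp enough to produce \emph{arbitrary} torsion.  A proper-power edge or a Whitehead obstruction tells you the relation matrix is not globally unimodular, but it does not by itself give you the freedom to realise a prescribed list of elementary divisors in some cover.  The paper's solution is geometric: it constructs an explicit \emph{branched surface} $\mathcal{B}_M$ (a $2$-complex built from surfaces glued along boundary circles, encoding a system of $\partial$-equations equivalent to the invariant-factor presentation of $M$) together with a map $\pi_1(\mathcal{B}_M)\to G$ that is injective on torsion in the abelianisation.  When the normal form already has a valence-$\ge 3$ cyclic vertex this is direct (\Cref{prop:branched-torsion}).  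When it does not, the rigid vertex must be exploited via an \emph{artificial branching} construction (\Cref{prop:artificial-branching}): two overlapping surface subpairs inside the rigid pair, obtained from Wilton's surface-subpair theorem applied to the strongly one-ended pair with different peripheral classes deleted, whose shared boundary component $[w_1^d]$ is shown to survive as an infinite-order summand via a minimality argument resting on Calegari's rationality theorem (\Cref{lem:key-property}).  None of these ingredients---Wilton's surface subpairs, strong one-endedness, Calegari's extremal surfaces, or the minimality step---appear in your outline, and without them there is no mechanism to pass from ``some local obstruction exists'' to ``every $M$ is realised''.  You have correctly located \emph{where} the obstruction lives; the missing idea is \emph{what to build} there.
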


In fact, \Cref{mainthm} holds in the slightly more general setting of hyperbolic graphs of virtually free groups amalgamated along $2$-ended subgroups:

\begin{maincor} [\Cref{cor:virtual}] \label{virtualcor}
    If $G$ is a hyperbolic group that splits as a graph of virtually free groups with virtually cyclic edges, and which is not virtually a free product of free and surface groups, then every finite abelian group appears as a direct summand in the abelianization of a finite-index subgroup of $G$.
\end{maincor}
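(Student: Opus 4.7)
The plan is to reduce \Cref{virtualcor} to \Cref{mainthm} by passing to a torsion-free finite-index subgroup $G_0 \le G$ that inherits the structure of a finite graph of free groups with cyclic edge groups, and then applying the main theorem to $G_0$.

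For the first step, I would produce a torsion-free finite-index subgroup $G_0 \le G$. Since $G$ is hyperbolic, it has only finitely many conjugacy classes of finite subgroups, and each of them is conjugate into a vertex group of the given splitting. Each vertex group is virtually free, hence residually finite and virtually torsion-free. As vertex and edge groups are drawn from a finite list, one can arrange compatible finite-index torsion-free normal subgroups of the vertex groups that match on edge subgroups, and piece them together via Bass--Serre theory into a torsion-free finite-index subgroup of $G$; alternatively, one invokes residual finiteness of $G$ in this class (an extension of Hsu--Wise's virtual specialness to the virtually-free / virtually-cyclic setting) and intersects out the finite list of torsion conjugacy classes. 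I expect this to be the main obstacle of the argument, since it is the only place where the additional virtual structure of vertex and edge groups plays a nontrivial role.

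With $G_0$ in hand, I would transfer the splitting via Bass--Serre theory. The subgroup $G_0$ acts on the Bass--Serre tree $T$ of $G$ with finite quotient $T/G_0$ (since $[G:G_0]<\infty$); each vertex stabilizer in $G_0$ is a torsion-free subgroup of a virtually free group, hence free, while each edge stabilizer is a torsion-free subgroup of a two-ended group, hence either trivial or infinite cyclic. Therefore $G_0$ splits as a finite graph of free groups with cyclic edge groups and remains hyperbolic (being finite-index in $G$). By the Kurosh subgroup theorem, every finite-index subgroup of a free product of free and surface groups is itself such a free product (finite-index subgroups of surface groups are again surface groups, and subgroups of free groups are free), so the hypothesis that $G$ is not virtually a free product of free and surface groups forces $G_0$ not to be such a free product either. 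Applying \Cref{mainthm} to $G_0$, every finite abelian $M$ appears as a direct summand of $H^{\ab}$ for some finite-index $H \le G_0$; as $H$ is automatically finite-index in $G$, this proves the corollary.
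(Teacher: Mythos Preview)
Your proposal is correct and follows essentially the same route as the paper: pass to a torsion-free finite-index subgroup $G_0$, observe via Bass--Serre theory that $G_0$ is a hyperbolic graph of free groups with cyclic edges, note that $G_0$ cannot be a free product of free and surface groups (else $G$ would be virtually such), and apply \Cref{mainthm}. The only difference is that the paper dispatches your ``main obstacle'' in one line by citing \cite[Theorem~5.1]{wise:graph-sep} for residual finiteness of $G$ (which, together with hyperbolicity, immediately gives virtual torsion-freeness), whereas you sketch the construction more explicitly; also, your invocation of Kurosh is unnecessary, since the contrapositive (if $G_0$ were such a free product then $G$ would be virtually one) is immediate from the definition.
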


\subsection*{Applications to profinite rigidity}
Before turning to discuss the ideas behind \Cref{mainthm}, we highlight the applications of our main result to the field of \emph{profinite rigidity}. Recall that the \emph{profinite completion} $\widehat{G}$ of a group $G$ is the closure of the image of $G$ under the diagonal map $g\mapsto (gN)_{N \triangleleft_{\mathrm{f.i}}G}$ from $G$ into the direct product $\prod_{N \triangleleft_{\mathrm{f.i}}G} G/N$ of all finite quotients of $G$ (endowed with the product topology). A finitely generated, residually finite group $G$ is called \emph{(absolutely) profinitely rigid} if, up to isomorphism, it is the only finitely generated, residually finite group whose profinite completion is isomorphic to $\widehat{G}$. The question of absolute profinite rigidity is notoriously difficult, and progress is often made by restricting the question to a class of groups. For instance, the question of whether finitely generated non-abelian free groups are absolutely profinitely rigid (attributed to Remeslennikov, \cite[Question 15]{Noskov1982}) remains unsolved, but Wilton showed that free groups (and surface groups) are profinitely rigid within the class of hyperbolic graphs of free groups with cyclic edge groups (and within the closely related class of finitely generated, residually free groups; see \cite[Corollary D]{Wil18} and \cite[Theorem 2]{wilton:words}). \par \smallskip

The abelianization—and more generally, the collection of abelianizations of all finite-index subgroups—of a finitely generated, residually finite group $G$ is a well-known profinite invariant. It follows almost immediately from \Cref{mainthm} that free products of free and surface groups are profinitely rigid among hyperbolic graphs of free groups with cyclic edges:

\begin{mainthm}[\Cref{thm:pf-rigid}] \label{maincorprof}
    Let $G$ be a free product of finitely many free and surface groups, and let $H$ be a hyperbolic group that splits as a graph of free groups with cyclic edge groups. If $\widehat{G} \cong \widehat{H}$, then $G\cong H$.
\end{mainthm}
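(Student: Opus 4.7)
The plan is to deduce the theorem from \Cref{mainthm} in two steps: first, reduce to the case in which $H$ is itself a free product of free and surface groups; then invoke existing profinite rigidity results within that restricted class.

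\textbf{Step 1: $H$ is a free product of free and surface groups.} I would verify the hypothesis of the contrapositive of \Cref{mainthm} for $H$. Two observations suffice. The torsion subgroup of the abelianization of a finitely generated residually finite group $K$ is a profinite invariant: one has $\widehat{K^{\ab}} \cong (\widehat K)^{\ab}$, and the torsion subgroup of a finitely generated abelian group $A$ coincides with that of $\widehat A$. Moreover, $\widehat G \cong \widehat H$ produces an index-preserving bijection between the finite-index subgroups of $G$ and those of $H$ under which profinite completions correspond. By Kurosh's subgroup theorem, every finite-index subgroup $G'\le G$ is itself a free product of finitely many free groups and closed orientable surface groups (finite covers of closed orientable surfaces being of the same form), so $(G')^{\ab}$ is torsion-free. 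Transferring across the bijection, every finite-index subgroup $H'\le H$ has torsion-free abelianization, and the contrapositive of \Cref{mainthm} applied to $H$ forces $H$ to be isomorphic to a free product of free and surface groups.

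\textbf{Step 2: isomorphism within the restricted class.} With both $G$ and $H$ now in this class and $\widehat G \cong \widehat H$, I would combine (i) Wilton's profinite rigidity of individual free and surface groups among hyperbolic graphs of free groups with cyclic edges (\cite{Wil18, wilton:words}) with (ii) a profinite recognition of the Grushko decomposition, ensuring that the number and profinite type of freely indecomposable factors are determined by the profinite completion. Ingredient (ii) yields a bijection between the Grushko factors of $G$ and of $H$ preserving profinite completions; (i) then upgrades each corresponding pair to a genuine isomorphism of groups, giving $G \cong H$. The hard part is ingredient (ii), a profinite Bass--Serre-type statement for this class; once it is available, the reduction to Wilton's prior rigidity results is routine, and this is what makes the deduction from \Cref{mainthm} ``almost immediate.''
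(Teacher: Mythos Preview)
Your overall strategy matches the paper's, but there is a gap in Step~1 and some vagueness in Step~2 worth flagging.

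In Step~1 you assert that every finite-index subgroup $G'\le G$ has torsion-free abelianization, justifying this by saying that finite covers of closed orientable surfaces are again orientable. But the statement allows $G$ to have \emph{non-orientable} surface factors; a non-orientable closed surface group has $\mathbb{Z}/2\mathbb{Z}$ in its abelianization, and odd-degree covers of non-orientable surfaces remain non-orientable. So $(G')^{\ab}$ need not be torsion-free. The paper repairs this exactly as you would expect: the torsion in $(G')^{\ab}$ is always $2$-torsion, so one invokes \Cref{mainthm} to produce $\mathbb{Z}/3\mathbb{Z}$ (rather than any torsion) in some $(H')^{\ab}$ and derives the contradiction from that. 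Your contrapositive argument survives once you target a specific prime other than $2$.

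In Step~2 you are right that a profinite Bass--Serre argument is the missing ingredient, but the paper is more concrete about it than your sketch suggests, and does not reduce to Wilton's earlier rigidity results. Instead it runs the Wilton--Zalesskii argument directly: orientable surface groups are cohomologically good profinite Poincar\'e duality groups of dimension~$2$, hence must fix a vertex in any profinite tree with trivial edge stabilizers. Applying this to the profinite Bass--Serre tree of the Grushko splitting of $H$ (and then with roles reversed) matches the surface factors bijectively; since closed surfaces are distinguished by their abelianizations, the matched factors are isomorphic, and quotienting by their normal closures handles the free part. The non-orientable case again needs separate bookkeeping (passing to the index-$2$ subgroup with torsion-free abelianization and the maximal number of surface factors), which you do not address.
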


\Cref{mainthm} also has implications on the \emph{profinite rigidity of words} in free groups. Every word $w$ in a free group $F_k=\gen{x_1,\ldots,x_k}$ gives rise to a \emph{word map} $w:G^k\rightarrow G$ for every group $G$. If $G$ is finite, every such word map induces a probability measure on $G$, by pushing forward the uniform measure on $G^k$ via the word map $w$. A long-standing conjecture (see \cite[Question 2.2]{amit:measure_conj}, \cite[Conjecture 4.2]{shalev:measure_conj} and \cite[Section 8]{puder:primitive}) posits that whenever two words $w,w'\in F_k$ induce the same measure on every finite group, there exists an automorphism $f\in \mathrm{Aut}(F_k)$ such that $f(w)=w'$. Hanany, Meiri and Puder gave an equivalent formulation of this conjecture \cite[Theorem 2.2]{puder:commutator} in terms of automorphic orbits in the profinite completion $\widehat{F}_k$ of $F_k$:
    
\begin{conj*} \label{conj:words}
    Let $w,w'\in F_k$. If there exists $\widehat{f} \in \mathrm{Aut}(\widehat{F}_k)$ such that $\widehat{f}(w)=w'$, then there exists $f\in \mathrm{Aut}(F_k)$ with $f(w)=w'$.
\end{conj*}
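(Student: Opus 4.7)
The plan is to apply \Cref{maincorprof} to the one-relator quotients $G_w = F_k/\langle\langle w \rangle\rangle$ and $G_{w'} = F_k/\langle\langle w' \rangle\rangle$. The hypothesis $\widehat{f}(w) = w'$ descends to an isomorphism of profinite completions $\widehat{G_w} \cong \widehat{G_{w'}}$, since $\widehat{f}$ carries the closed normal closure of $w$ in $\widehat{F}_k$ onto that of $w'$. The target becomes first promoting this to an abstract isomorphism $G_w \cong G_{w'}$, and then lifting that isomorphism to an $f \in \mathrm{Aut}(F_k)$ with $f(w) = w'$.

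I would focus on the subclass of \emph{partial surface words}, i.e.\ those $w \in F_k$ that, after an automorphism of $F_k$, become a standard surface relator supported on a free factor, so that $G_w$ is a free product of a free group with a surface group. Here \Cref{mainthm} plays the crucial role. On one hand, $G_w$ is a free product of free and surface groups, so every finite-index subgroup of $G_w$ has torsion-free abelianization. On the other hand, if $G_{w'}$ were to lie in the class of hyperbolic graphs of free groups with cyclic edge groups but \emph{not} be a free product of free and surface groups, then \Cref{mainthm} would produce a finite-index subgroup of $G_{w'}$ whose abelianization contains nontrivial torsion. Since the torsion in abelianizations of finite-index subgroups is a profinite invariant, this would contradict $\widehat{G_w} \cong \widehat{G_{w'}}$. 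Hence $G_{w'}$ is itself a free product of free and surface groups, and \Cref{maincorprof} yields $G_w \cong G_{w'}$.

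The final step is to lift this abstract isomorphism to an element of $\mathrm{Aut}(F_k)$ sending $w$ to $w'$, by appealing to the Nielsen--Zieschang rigidity of surface group presentations together with Grushko's theorem for the free factors. The main obstacles are twofold: first, one must establish \emph{a priori} that $G_{w'}$ actually lies in the class of hyperbolic graphs of free groups with cyclic edges, which requires deducing from $\widehat{G_w} \cong \widehat{G_{w'}}$ that $w'$ produces a quotient of the right topological form (a splitting property not automatically preserved under profinite completion); and second, one must rule out exotic isomorphisms of one-relator groups that are not induced by Nielsen transformations on $F_k$, which demands a careful analysis of the equivalent one-relator presentations of free products of free and surface groups. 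Both of these obstructions disappear in the full generality of the conjecture only if $G_w$ can be shown to fall inside the reach of \Cref{maincorprof} for every $w$, which is the principal reason the conjecture remains open beyond the partial surface case.
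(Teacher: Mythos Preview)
The statement you are attempting is the full \emph{conjecture}, which the paper does not prove; the paper only establishes the special case of partial surface words (\Cref{maincorwords}). Your proposal is really a sketch of that special case together with an honest acknowledgment that the general case is out of reach, so as a proof of the stated conjecture it is incomplete by design.

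For the partial surface case, your route differs from the paper's in one key respect, and the difference matters. You pass to the one-relator quotient $G_{w'}=F_k/\langle\!\langle w'\rangle\!\rangle$ and then need to know that $G_{w'}$ is a hyperbolic graph of free groups with cyclic edge groups before \Cref{maincorprof} can be applied; you flag this as your first obstacle, and indeed there is no general mechanism to deduce such a splitting from the profinite isomorphism alone. The paper instead passes to the \emph{double} $F_k\ast_{w'}F_k$. This is automatically a graph of free groups with cyclic edge groups (an amalgam of two copies of $F_k$ over $\langle w'\rangle$), and it is hyperbolic as soon as $w'$ is not a proper power---which is inherited profinitely from $w$ by \cite[Theorem~1.7]{puder:commutator}. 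Thus the double construction places $w'$ squarely inside the hypotheses of \Cref{maincorprof} with no extra work, dissolving your first obstacle entirely. Your second obstacle, lifting the abstract isomorphism back to an element of $\mathrm{Aut}(F_k)$, is genuinely present in both approaches; the paper dispatches it by citing Wilton's earlier arguments \cite[Corollary~4]{wilton:words}, \cite[Corollary~E]{Wil18}, which handle exactly this step for doubles.

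A minor correction: your claim that every finite-index subgroup of $G_w$ has torsion-free abelianization fails for non-orientable partial surface words, where $\mathbb{Z}/2\mathbb{Z}$ summands appear. The paper's proof of \Cref{thm:pf-rigid} accounts for this by using $\mathbb{Z}/3\mathbb{Z}$ as the distinguishing torsion.
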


A word $w\in F_k$ is \emph{profinitely rigid} (in $F_k$) if 
    \[\mathrm{Aut}(\widehat{F}_k).w \cap F = \mathrm{Aut}(F_k).w,\]
that is, the profinite orbit of $w$ in $\widehat{F}_k$ (under $\mathrm{Aut}(\widehat{F}_k)$) intersects $F_k$ precisely in the ordinary automorphic orbit of $w$ in $F_k$. As with the question of (absolute) profinite rigidity, this conjecture seems extremely difficult. In fact, it is often viewed as a relative version of Remeslennikov's question mentioned above. At present, the conjecture has been verified for three (or four, when $k$ is even) distinct automorphic orbits in $F_k$ (of words which are not proper powers, to which the question is reduced by \cite[Theorem 1.7]{puder:commutator}). These results have been proven using a variety of deep methods, ranging from probabilistic and geometric to algebraic techniques. The known cases include:
\begin{enumerate}
    \item \emph{primitive words}, that is, words in the orbit of $x_1$ (Parzanchevsky and Puder \cite[Theorem 1.1]{puder:primitive}, and later Wilton \cite[Corollary E]{Wil18} and Garrido and Jaikin-Zapirain \cite[Theorem 1.1]{garrido:ff}),
    \item \emph{commutators of basis elements}, that is, words in the orbit of $[x_1,x_2]$ (Hanany, Meiri and Puder \cite[Theorem 1.4]{puder:commutator}), and
    \item \emph{surface words}, that is, words in the orbits of $x_1^2\ldots x_k^2$ and (when $k$ is even) $[x_1,x_2]\ldots [x_{k-1},x_k]$ (Wilton \cite[Corollary 4]{wilton:words}). 
\end{enumerate} \par

\emph{Partial surface words} in $F_k$ are words that bridge the gaps between (1), (2) and (3) above, i.e., words of the form
\begin{enumerate}
    \item $[x_1,x_2]\cdots [x_{2n-1},x_{2n}]$ for $2n<k$ (\emph{orientable} partial surface words), and
    \item $x_1^2\cdots x_n^2$ for $n<k$ (\emph{non-orientable} partial surface words).
\end{enumerate}
Magee and Puder showed that partial surface words possess the weaker property of being determined by the word measures that they induce on compact groups \cite[Theorem 1.4]{Magee2021}. They are the only examples of words known to have this property, but are not known to be profinitely rigid (a distinction that is conjecturally vacuous). An immediate corollary of \Cref{maincorprof} is that partial surface words in $F_k$ are profinitely rigid:

\begin{maincor}[\Cref{cor:partial_words}]\label{maincorwords}
    Partial surface words in a free group $F_k$ are profinitely rigid, that is, they are determined by the measures that they induce on finite groups.
\end{maincor}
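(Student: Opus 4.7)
The plan is to leverage \Cref{maincorprof} via the one-relator quotient of $F_k$ defined by a partial surface word. Let $w$ be a partial surface word and suppose $w' \in F_k$ satisfies $\widehat{f}(w) = w'$ for some $\widehat{f} \in \mathrm{Aut}(\widehat{F}_k)$. Setting $G_w := F_k / \langle\langle w \rangle\rangle$ and $G_{w'} := F_k / \langle\langle w' \rangle\rangle$, the automorphism $\widehat{f}$ maps the topological normal closure of $w$ in $\widehat{F}_k$ to that of $w'$, producing an isomorphism $\widehat{G_w} \cong \widehat{G_{w'}}$ of the profinite completions. By direct inspection, $G_w$ is a free product of a surface group and a free group: explicitly, $G_w \cong \pi_1(\Sigma_n) * F_{k-2n}$ in the orientable case and $G_w \cong \pi_1(N_n) * F_{k-n}$ in the non-orientable case.

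In the main regime where $G_w$ is hyperbolic (orientable with $n \geq 2$, non-orientable with $n \geq 3$), the next step is to identify $G_{w'}$ as a hyperbolic group splitting as a graph of free groups with cyclic edge groups, so that \Cref{maincorprof} applies. This is the main obstacle: one must transfer structural features of $G_w$ to $G_{w'}$ through the isomorphism of profinite completions, notably residual finiteness, hyperbolicity, and the existence of a cyclic splitting with free vertex groups. The remaining low-complexity partial surface words correspond to primitive words, basis commutators, or proper powers, which are either already known to be profinitely rigid by \cite{puder:primitive, puder:commutator} or excluded from the conjecture; the edge case $n = 2$ non-orientable, where $G_w$ is the free product of the Klein bottle group with a free group, can be handled by a separate direct argument.

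Once $G_{w'}$ is recognized as lying in the class of \Cref{maincorprof}, the theorem yields an abstract isomorphism $G_{w'} \cong G_w$. To conclude, I would lift this to an automorphism of $F_k$ carrying $w$ to a conjugate of $w'^{\pm 1}$. Magnus's theorem reduces the problem to showing that any two surjections $F_k \twoheadrightarrow \pi_1(\Sigma) * F$ differ by an automorphism of $F_k$; this presentation rigidity should follow from Grushko's theorem together with the classical rigidity of presentations of surface and free groups, completing the passage from profinite to abstract equivalence of $w$ and $w'$.
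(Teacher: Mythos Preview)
Your approach via the one-relator quotient $G_{w'}=F_k/\langle\langle w'\rangle\rangle$ has a genuine gap at exactly the point you flag as ``the main obstacle'': there is no known way to deduce, from $\widehat{G_{w'}}\cong\widehat{G_w}$ alone, that the one-relator group $G_{w'}$ is hyperbolic and splits as a graph of free groups with cyclic edge groups. One-relator groups need not lie in this class, and none of residual finiteness, hyperbolicity, or the existence of such a splitting is currently known to be a profinite invariant in this generality. Without this, \Cref{maincorprof} simply does not apply to $G_{w'}$, and the argument stalls.

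The paper sidesteps this entirely by working with the \emph{double} rather than the quotient. If $w'$ lies in the $\mathrm{Aut}(\widehat{F}_k)$-orbit of $w$, then $\widehat{F_k\ast_{w'}F_k}\cong\widehat{F_k\ast_{w}F_k}$. The point is that $F_k\ast_{w'}F_k$ is \emph{by construction} a graph of free groups with a single cyclic edge group, and it is hyperbolic because $w'$ is not a proper power (a property preserved under $\mathrm{Aut}(\widehat{F}_k)$, cf.\ \cite{puder:commutator}). Since $F_k\ast_{w}F_k$ is a free product of a closed surface group and a free group, \Cref{maincorprof} applies directly, giving $F_k\ast_{w'}F_k\cong F_k\ast_{w}F_k$; the passage from isomorphic doubles to $w'\in\mathrm{Aut}(F_k).w$ is then handled exactly as in \cite{wilton:words}. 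The double construction thus places $w'$ into the correct class for free, whereas your quotient construction would require proving deep profinite-invariance results that are themselves open.
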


\subsection*{On the proof of \Cref{mainthm}}

The boundary loops of a compact, orientable surface $\Sigma$ of positive genus sum to $0$ in $H_1(\Sigma)$; this relationship between a surface and its boundary, as one would expect, plays a key role in our proof. Sun’s construction (and that of Chu and Groves) of finite-sheeted covers of a closed, hyperbolic $3$-manifold $M$ exhibiting homological torsion, relies on immersing certain $2$-complexes $X_p$ into $M$: surfaces whose boundary components trace $p$-powers of loops. These loops contribute $p$-torsion to the first homology group of $X_p$, which is then promoted to homological torsion in a finite cover of $M$ via a virtual retraction. Our proof also makes use of virtual retractions, but the gadget we employ to produce homological torsion is of a different kind: a \emph{branched surface}. In our context, a branched surface refers to a collection of compact surfaces with boundary, glued along their boundary components; we refer the reader to \Cref{sec:branched} for the precise definition. \par \smallskip

Each compact, orientable surface $\Sigma$ of positive genus and with boundary components $\sigma_1,\ldots,\sigma_k$ (each equipped with an orientation) gives rise to an equation of the form
\[
\pm[\sigma_1]\pm[\sigma_2]\pm \cdots \pm [\sigma_k]=0
\]
in $\pi_1(\Sigma)^{\mathrm{\ab}}$. We refer to equations of the form $\pm x_1 \pm x_2 \pm \cdots \pm x_k=0$ as \emph{$\partial$-equations}. A branched surface built from such orientable, positive genus surfaces with non-empty boundary can thus be viewed as a geometric realization of a \emph{system} of $\partial$-equations. Indeed, if $B$ is such a branched surface, $H_1(B)$ is a quotient of a free abelian group by a system of $\partial$-equations, and conversely, for every system $\Psi$ of $\partial$-equations there are (many) branched surfaces whose first homology group is obtained by imposing the linear constraints in $\Psi$ on a free abelian group. The first step in our proof shows that every system of linear equations in a free abelian group is \emph{equivalent} to a system of $\partial$-equations; here, ``equivalent'' means that both systems yield isomorphic quotient groups. We also show that for a given branched surface $B$ (which is not a surface), any system of $\partial$-equations can be realized by a branched surface that immerses into $B$. This proves \Cref{mainthm} for branched surfaces (see \Cref{prop:branched-torsion} and the accompanying \Cref{fig:example_torsion}). \par \smallskip

In Wilton's proof that surfaces immerse into a non-free hyperbolic graph of free groups with cyclic edges $G$ \cite[Theorem A]{Wil18}, the non-freeness assumption is used as a lower bound on the complexity of vertex links in a graph of spaces decomposition of $G$ (i.e. Whitehead graphs). Replacing non-freeness with the stronger assumption that $G$ is not a free product of free and surface groups imposes a sharper lower bound on the complexity of $G$. One might therefore expect to be able to immerse more intricate 2-complexes, such as branched surfaces, into $G$; however, it remains unclear whether this is always the case. Nevertheless, we show that fundamental groups of branched surfaces admit maps to $G$ that are “injective enough” for our purposes: they induce injections on the torsion part of the abelianization. \par \smallskip

The assumption that $G$ is not a free product of free and surface groups leads (after possibly passing to a finite cover) to one of the following scenarios: \begin{enumerate} 
\item The underlying graph of the graph of groups decomposition of $G$ exhibits genuine branching: \emph{at least} three cyclic subgroups are identified in the graph of groups to form a single copy of $\mathbb{Z}$. More specifically, a \emph{tree of cylinders} of a JSJ decomposition of (a factor in the Grushko decomposition of) $G$ admits a cyclic vertex of valence $\ge 3$. In this case, one can apply Wilton’s \cite[Theorem F]{Wil18}, replacing each non-cyclic vertex group with a surface whose boundary is compatible with the incident edge groups, thereby producing a branched surface that immerses into $G$. 
\item At least one vertex group of $G$ is sufficiently complicated—that is, a JSJ decomposition of (a factor in the Grushko decomposition of) $G$ contains a \emph{rigid} vertex group $G_v$. \end{enumerate}

The latter case, which occurs, for instance, when $G$ is the double of a free group along a complicated word, is considerably more involved. We give a brief overview of our strategy below, and refer the curious reader to the start of \Cref{sec:main}, where our strategy is outlined in greater detail. In this case, we use additional results of Wilton on the structure of rigid vertex groups of $G$ \cite[Theorem 8]{wil:one-ended}, which imply the existence of multiple surfaces immersing into $G_v$ with overlapping boundaries (that cover edge groups adjacent to $G_v$). This \emph{artificial branching} behaviour lies at the heart of our construction, and allows us to construct maps from branched surfaces to $G$. Controlling how these surfaces intersect, and particularly, how they interact in the abelianization, is subtle, but crucial. Extremal surfaces arising from Calegari’s Rationality Theorem \cite[Theorem 4.24]{Cal09} equip us with just the right amount of control, ultimately producing maps from branched surfaces to $G$ that induce injections on the torsion part of the abelianization.

\subsection*{Organization of the paper} \Cref{prelims} is a preliminary section, and contains basic results about graphs of groups (and spaces). In \Cref{pairs} we recall the theory of \emph{group pairs}, which describe the relation between a vertex group of a graph of groups and its incident edge groups, and prove auxiliary lemmas that will be used throughout our constructions. \Cref{sec:branched} is devoted to branched surfaces and their correspondence with systems of $\partial$-equations; in this section we prove \Cref{mainthm} for branched surfaces. In \Cref{sec:main} we construct artificial branching, and prove \Cref{mainthm}. Finally, in \Cref{sec:profinite} we apply \Cref{mainthm} to the study of profinite rigidity. 

\subsection*{Acknowledgments} Ascari was funded by the Basque Government grant IT1483-22. Fruchter received funding from the European Union (ERC, SATURN, 101076148) and the Deutsche Forschungsgemeinschaft (DFG, German Research Foundation) under Germany's Excellence Strategy - EXC-2047/1 - 390685813. We are grateful to Damien Gaboriau for his insightful comments, and to Giles Gardam for assistance with the computer experiments.

\section{Preliminaries} \label{prelims}

In this section, we lay the groundwork by introducing the necessary definitions and collecting a few useful lemmas that will be used throughout the proofs. Our focus is on graphs of groups (and spaces) and their covering spaces, with particular attention to covering space techniques that we will use to construct subgroups with prescribed torsion in the abelianization. 

\subsection{Graphs of groups (and spaces)}

Following Serre's conventions, a \emph{graph} is a quintuple $\Gamma=(\mathrm{V}(\Gamma),\mathrm{E}(\Gamma),\overline{\cdot},\iota,\tau)$; here $\mathrm{V}(\Gamma)$ and $\mathrm{E}(\Gamma)$ are the sets of vertices and edges of the graph respectively. $\overline{\cdot}:\mathrm{E}(\Gamma)\rightarrow \mathrm{E}(\Gamma)$ is the edge-reversing map, satisfying $\overline{\overline{e}}=e$ and $\overline{e}\not=e$ for all edges $e\in \mathrm{E}$. $\iota,\tau:\mathrm{E}(\Gamma)\rightarrow \mathrm{V}(\Gamma)$ are the initial and terminal vertex maps (respectively), mapping each $e\in \mathrm{E}(\Gamma)$ to its endpoints and satisfying $\tau(\overline{e})=\iota(e)$ for all edges $e$.

\begin{defn}
    A \emph{graph of groups} is a quadruple $\mathcal{G}=(\Gamma,\{G_v\}_{v\in \mathrm{V}(\Gamma)},\{G_e\}_{e_\in \mathrm{E}(\Gamma)},\{\varphi_e\}_{e\in \mathrm{E}(\Gamma)})$, where 
    \begin{enumerate}
        \item $\Gamma$ is a connected graph,
        \item $G_v$ are groups associated to the different vertices $v\in \mathrm{V}(\Gamma)$,
        \item $G_e$ are groups associated to the edges $e\in \mathrm{E}(\Gamma)$, and such that $G_{\overline{e}}=G_e$, and
        \item $\varphi_e:G_e\rightarrow G_{\tau(e)}$ are \textit{injective} homomorphisms for every $e\in \mathrm{E}(\Gamma)$
    \end{enumerate}
\end{defn}

Before defining \emph{graphs of spaces}, we should point out that whenever we consider a continuous map $f:Y\rightarrow X$ between two CW complexes, we always implicitly assume that $f(Y^{(k)})\subseteq X^{(k)}$, i.e., that the map sends the $k$-skeleton of $Y$ into the $k$-skeleton of $X$. This can always be obtained up to homotopy.

\begin{defn}
    A \textbf{graph of spaces} is a quadruple $\mathcal{X}=(\Gamma,\{X_v\}_{v\in V(\Gamma)},\{X_e\}_{e_\in E(\Gamma)},\{i_e\}_{e\in E(\Gamma)})$, where $\Gamma$ is a connected graph, $X_v$ are connected CW complexes associated to the vertices of $\Gamma$, $X_e$ are connected CW complexes associated to the edges of $\Gamma$ satisfying $X_{\overline{e}}=X_e$, and for every $e\in \mathrm{E}(\Gamma)$, $i_e:X_e\rightarrow X_{\tau(e)}$ is a $\pi_1$-injective continuous maps.
\end{defn}

Let $\mathcal{X}$ be a graph of spaces; the \textbf{geometric realization} $X$ of $\mathcal{X}$ is the topological space obtained by taking the disjoint union
\[\bigsqcup_{v\in V(\Gamma)}X_v\sqcup\bigsqcup_{e\in E(\Gamma)}X_e\times[0,1],\] 
and, for every edge $e\in E(\Gamma)$, identifying
\begin{enumerate}
    \item the subspace $X_e\times\{0\}$ with $X_{\overline{e}}\times\{0\}$ (using the fact that $X_{\overline{e}}=X_e$), and
    \item the subspace $X_e\times\{1\}$ to $X_{\tau(e)}$ using the continuous map $i_e$.
\end{enumerate}
Since the maps $i_e$ are assumed to be $\pi_1$-injective, one can prove that the ``inclusion'' maps $X_v\rightarrow X$ and $X_e=X_e\times\{0\}\rightarrow X$ are also $\pi_1$-injective. \par \smallskip

Given a graph of groups $\mathcal{G}=(\Gamma,\{G_v\}_{v\in \mathrm{V}(\Gamma)},\{G_e\}_{e_\in \mathrm{E}(\Gamma)},\{\varphi_e\}_{e\in \mathrm{E}(\Gamma)})$, we can always choose a collection of connected CW complexes $X_v,X_e$ with fundamental groups isomorphic to $G_v,G_e$, respectively, along with continuous maps $i_e:X_e\rightarrow X_{\tau(e)}$ inducing the homomorphisms $\varphi_e:G_e\rightarrow G_{\tau(e)}$. This construction yields a graph of spaces 
\[\mathcal{X}=(\Gamma,\{X_v\}_{v\in \mathrm{V}(\Gamma)},\{X_e\}_{e_\in \mathrm{E}(\Gamma)},\{i_e\}_{e\in \mathrm{E}(\Gamma)}).\]
Different choices of $X_v,X_e$ and $i_e$ may produce different graphs of spaces $\mathcal{X}$, but the fundamental groups of their geometric realizations $X$ will always be isomorphic. Therefore, we define 
\[\pi_1(\mathcal{G}):=\pi_1(X),\] 
where $X$ is the geometric realization of any graph of spaces constructed from $\mathcal{G}$ as described above.

\begin{rmk}
    In particular, if we take a graph of groups $\mathcal{G}$, and alter the homomorphism $\varphi_e:G_e\rightarrow G_{\tau(e)}$ by post-composing with conjugation by an element of $G_{\tau(e)}$, the fundamental group $\pi_1(\mathcal{G})$ remains unaffected. Therefore, the resulting graph of groups can be regarded as essentially the same.
\end{rmk}

Throughout this paper, we will focus our attention on graphs of groups whose edge groups are isomorphic to $\mathbb{Z}$. Thus, in our associated graphs of spaces, the edge spaces $X_e$ will always be homeomorphic to $S^1$.

\subsection{Covers, precovers and (geometric) elevations}
\label{subsec:precovers}

Let $\mathcal{X}$ be a graph of spaces and let $X$ be its geometric realization. Every covering space $p:Y\twoheadrightarrow X$ inherits a graph of spaces decomposition $\mathcal{Y}$ from $\mathcal{X}$: the vertex and edge spaces of $\mathcal{Y}$ are the connected components of the preimages under $p$ of the vertex and edge spaces of $X$. Moreover, $p$ induces a \emph{combinatorial} map between the underlying graphs $\Gamma$ and $\Delta$ of $\mathcal{X}$ and $\mathcal{Y}$ respectively (mapping vertices to vertices, and edges to edges). It remains to describe the edge maps of $\mathcal{Y}$, which is the crucial step in understanding the structure of $\mathcal{Y}$. This is captured by the notion of \emph{elevations}, introduced by Wise \cite[Definition 2.18]{wise:graph-sep}. While elevations can be defined for arbitrary edge spaces, the theory simplifies significantly when the edge spaces are circles. For this reason, we will restrict our definition to the case where the edge spaces are $S^1$. \par \smallskip

Let $\alpha:S^1\rightarrow X$ be a continuous map between connected CW complexes, and let $p:Y\rightarrow X$ be a covering map. Consider the pullback $Q=S_1\times _X Y$ of the two maps $\alpha$ and $p$. $Q$ comes equipped with continuous maps $\beta:Q\rightarrow Y$ and $q:Q\rightarrow S^1$. The map $q:Q\rightarrow S^1$ is a cover, and its degree is the same as that of $p:Y\rightarrow X$; however, in general, $Q$ may be disconnected.

\begin{defn}
    \label{def:geom-elev}
    A \emph{(geometric) elevation} of $\alpha:S^1\rightarrow X$ to $Y$ is the restriction $\beta\vert_{Q_i}:Q_i\rightarrow Y$ of $\beta$ to a connected component $Q_i$ of $Q$.

\begin{center}
\begin{tikzpicture}
    \node (A) at (0,2) {$Q=\bigsqcup_{i} Q_i$};
    \node (B) at (4,2) {$Y$};
    \node (C) at (0,0) {$S^1$};
    \node (D) at (4,0) {$X$};

    \draw[->] (A) -- node[above] {$\beta = \bigsqcup_{i} \beta \vert_{Q_i}$} (B);
    \draw[->] (B) -- node[right] {$p$} (D);
    \draw[->] (A) -- node[left] {$q$} (C);
    \draw[->] (C) -- node[below] {$\alpha$} (D);
\end{tikzpicture}
\end{center}
\end{defn}

\begin{rmk}
    Relating to our previous discussion, the edge maps of $\mathcal{Y}$ are precisely the (geometric) elevations of the edge maps of $\mathcal{X}$ to $Y$.
\end{rmk}

The degree of the covering map $q:Q'\rightarrow S^1$ is called \emph{degree} of the elevation. The degree of a geometric elevation can be finite (in which case $Q'\cong S^1$) or infinite (in which case $Q'\cong\mathbb{R}$, the universal cover of $S^1$).

\begin{example}\label{ex:geometric-elevation}
    The purpose of this example is to describe elevations in a hands-on way. Let $X$ be a connected graph with a single vertex and finitely many edges, so that $\pi_1(X)$ is a finitely generated free group. A map $\alpha:S^1\rightarrow X$ corresponds to the homotopy class of a loop in $X$, representing the conjugacy class of an element of $\pi_1(X)$. Let $p:Y\rightarrow X$ be a cover of $X$; note that $Y$ is a graph. We remark that $Y$ may be infinite, in which case we encourage the reader to only consider the \emph{core} $C_Y$ of $Y$, which is a finite subgraph of $Y$ satisfying $\pi_1(C_Y) = \pi_1(Y)$ whenever $\pi_1(Y)$ is finitely generated. Note that if $\alpha$ is reduced, that is, $\alpha$ contains no back-tracks, then the image of every finite-degree elevation of $\alpha$ will be contained in $C_Y$ (and the elevations of $\alpha$ to $C_Y$ coincide with the finite-degree elevations of $\alpha$ to $Y$). The elevations of $\alpha$ to $Y$ can be constructed as follows: \par \smallskip

    Start at a vertex $v_0$ of $Y$ and lift $\alpha$ to a path in $Y$, starting at $v_0$. The terminal point $v_1$ of this path might be different from $v_1$, in which case, lift $\alpha$ to a path in $Y$ starting at $v_1$. Repeat this process, and obtain a sequence of vertices $v_0,v_1,v_2,\dots$ of $Y$. \par \smallskip
    
    Let $d\ge1$ be the smallest such that $v_d=v_0$; we have a degree-$d$ elevation $\beta:S^1\rightarrow Y$ based at $v_0$, which traverses $\alpha$ exactly $d$ times. We emphasize that $d$ should be chosen the smallest possible. Indeed, a map $\beta$ can be defined using any multiple $kd$ of $d$, but such a map will not form a part of the pullback of $\alpha$ along $p$. Moreover, taking $v_i$ instead of $v_0$ as a starting point would produce the exact same elevation.\par \smallskip
    
    If no such $d\ge1$ exists, the resulting line obtained by concatenating lifts of $\alpha$ to $Y$ forms an infinite degree elevation $\beta:\mathbb{R}\rightarrow Y$ (corresponding to the universal cover of $S^1$).
\end{example}

Covering spaces of graphs of spaces can be built ``piece-by-piece'', as we will do later on. It is most convenient to work with compact spaces, which correspond to finite covers and therefore to finite-index subgroups. However, we would like to have compact spaces that represent infinite-index subgroups, for which we recall the notion of \emph{precovers}; these can be thought of as particularly nice compact cores of certain infinite covers of $X$. To define a precover of $\mathcal{X}$, we first gather the following data:

\begin{enumerate}
    \item A collection of covering spaces $\{p_u:Y_u \twoheadrightarrow X_v\}$ of (some of) the vertex spaces $X_v$ of $\mathcal{X}$.
    \item For every edge $e\in \mathrm{E}(\Gamma)$, a (possibly empty) collection $\{j_f:Y_f \rightarrow Y_u\}$ of elevations of $i_e$ to $\bigsqcup_u Y_u$; we denote the covering map from $Y_f$ to $X_e$ by $\theta_f$.
    \item For every edge $e\in \mathrm{E}(\Gamma)$, a degree-preserving bijection $b_e$ between the chosen collections of elevations of $i_e$ and $i_{\overline{e}}$ to $\bigsqcup_u Y_u$. These maps dictate how to assemble $\bigsqcup_u Y_u$ into a graph of spaces.
\end{enumerate}

This data naturally gives rise to a graph $\Delta$. The vertex set of $\Delta$ consists of the chosen covering spaces of the vertex spaces $X_v$ of $\mathcal{X}$. The edges of $\Delta$ are given by the maps $b_e$: for every pair of elevations $j_f:Y_f \rightarrow Y_u$ and $j_{f'}:Y_{f'}\rightarrow Y_{u'}$ such that $b_e(j_f)=j_{f'}$, there is an edge $f=(Y_{u'},Y_u)\in \mathrm{E}(\Delta)$ (and $\overline{f} = (Y_u, Y_{u'})$. In addition, the construction gives rise to a natural combinatorial map $\theta:\Delta \rightarrow \Gamma$. Finally, the quadruple 
\[\mathcal{Y}=(\Delta,\{Y_u\},\{Y_f\},\{j_f\}),\] 
accompanied by the collection of maps 
\[\Theta=(\theta,\{\theta_u\},\{\theta_f\}):\mathcal{Y}\rightarrow\mathcal{X},\] 
is called a \emph{precover} of $\mathcal{X}$. The elevations of edge maps of $\mathcal{X}$ to the vertex spaces of $\mathcal{Y}$, which are not edge maps of $\mathcal{Y}$, are called \emph{hanging elevations}. The significance of hanging elevations lies in their potential for subsequent gluing, allowing us to extend a precover to a larger precover (or a cover) of $\mathcal{X}$ if desired. For a more general treatment of precovers (where the edge spaces are not assumed to be circles), we refer the reader to Wilton's works \cite[Section 2]{wil:sep} and \cite[Section 1]{wil:one-ended}. \par \smallskip

There is a natural map between the geometric realizations $X$ and $Y$ of $\mathcal{X}$ and $\mathcal{Y}$, which we also denote by $\theta$. Assuming that $Y$ is connected, we thus obtain a subgroup $\theta_*(\pi_1(Y))=H$ of $\pi_1(X)=G$. $Y$ embeds as a subspace of the covering space $\widehat{\theta}:\widehat{Y}\rightarrow X$ associated to the subgroup $H\le G$. In particular, the map $\theta:Y\rightarrow X$ is $\pi_1$-injective:

\begin{lemma}[{\cite[Lemma 16]{wil:one-ended}}]
    If $\mathcal{Y}$ is a connected precover of $\mathcal{X}$ then the map $\theta:Y \rightarrow X$ induces an injective homomorphism $\theta_*:\pi_1(Y) \rightarrow \pi_1(X)$.
\end{lemma}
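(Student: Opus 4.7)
The plan is to extend the precover $\mathcal{Y}$ to a genuine cover $\widehat{\theta}: \widehat{Y} \to X$ in which $Y$ sits as a sub-graph-of-spaces, and then write $\theta_{*} = \widehat{\theta}_{*} \circ \iota_{*}$, where $\iota: Y \hookrightarrow \widehat{Y}$ is the inclusion. Both factors are $\pi_1$-injective: the latter automatically, as a covering map, and the former by Bass-Serre theory, as described below.

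To complete $\mathcal{Y}$ to a cover, for every hanging elevation $j_f: Y_f \to Y_u$ of an edge map $i_e: X_e \to X_{\tau(e)}$ I would attach a cylinder $Y_f \times [0,1]$ along $j_f$ at one end, and glue the other end to a freshly introduced vertex space $\widetilde{Z}$ covering $X_{\iota(e)}$ via a degree-matching elevation of $i_{\overline{e}}$. Taking $\widetilde{Z}$ to be the universal cover of $X_{\iota(e)}$ provides a uniform choice that always admits such a matching elevation (of arbitrary, possibly infinite, degree). Iterating this construction (potentially infinitely many times) yields a graph of spaces $\widehat{\mathcal{Y}}$ with no hanging elevations, containing $\mathcal{Y}$ as a sub-graph-of-spaces. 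A local verification in the interior of each vertex space (where $p_u$ is already a cover), in the interior of each cylinder, and at their gluing circles (where the degree-preserving bijections $b_e$ and the matching elevations reproduce the local model of $X$) then shows that the natural combinatorial map $\widehat{\theta}: \widehat{Y} \to X$ is indeed a covering map.

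The inclusion $\iota: Y \hookrightarrow \widehat{Y}$ can be realized by iteratively adjoining vertex spaces along $\pi_1$-injective edge cylinders, each step presenting the new fundamental group either as an amalgamated free product $A *_C B$ or as an HNN extension $A*_C$, where $C = \pi_1(Y_f)$ is cyclic or trivial. By standard Bass-Serre theory the fundamental group at each stage embeds into the next, so $\iota_{*}: \pi_1(Y) \hookrightarrow \pi_1(\widehat{Y})$ is injective, and composing with $\widehat{\theta}_{*}$ yields the desired injectivity of $\theta_{*}$. The main technical obstacle is guaranteeing a matching elevation at every completion step; uniform passage to universal covers handles this, after which everything else is routine local covering-space bookkeeping and Bass-Serre normal form.
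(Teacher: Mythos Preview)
Your overall strategy---complete $\mathcal{Y}$ to a genuine cover $\widehat{Y}$ of $X$ and factor $\theta_*$ as an inclusion followed by a covering map---is correct, and it is exactly what the paper sketches in the sentence preceding the lemma (the paper takes $\widehat{Y}$ to be the cover of $X$ corresponding to $H=\theta_*(\pi_1(Y))$ and asserts that $Y$ embeds there, then cites Wilton for the details). However, your specific completion step contains an error: taking $\widetilde{Z}$ to be the \emph{universal} cover of $X_{\iota(e)}$ does not provide elevations of arbitrary degree. Every elevation of $i_{\overline{e}}$ to the universal cover of $X_{\iota(e)}$ has infinite degree (each component of the pullback is the universal cover of $X_e$), so a hanging elevation $Y_f\cong S^1$ of finite degree $d$ cannot be matched against anything in $\widetilde{Z}$.

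The fix is immediate and does not disturb the rest of your argument: for a hanging elevation $Y_f\to Y_u$ of degree $d$, attach instead the cover of $X_{\iota(e)}$ corresponding to the image of $\pi_1(Y_f)$ in $\pi_1(X_{\iota(e)})$ under $(i_{\overline{e}})_*\circ(\theta_f)_*$ (equivalently, the cyclic subgroup generated by the $d$-th power of the image of the edge generator, with $d=\infty$ giving the trivial subgroup and hence the universal cover). This cover admits $Y_f$ as an elevation by construction. With that change, your iterated gluing and the Bass--Serre argument for $\pi_1$-injectivity of $\iota$ go through as written.
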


\begin{rmk}
    If all the covers $\theta_u:Y_u\rightarrow X_{p(u)}$ have finite degree, and if $\mathcal{Y}$ has no hanging elevations, then the map $\Theta:\mathcal{Y}\rightarrow \mathcal{X}$ is a finite-sheeted covering map of graphs of spaces. This map corresponds to a finite cover at the level of geometric realizations, and to a finite-index subgroup at the level of fundamental groups.
\end{rmk}

Throughout the paper, we slightly abuse terminology by referring to precovers of graphs of \emph{groups}, rather than restricting to precovers of graphs of spaces.

\subsection{Separability properties}

We continue by reviewing separability properties of hyperbolic graphs of free groups with cyclic edge groups. The main goal of this section is to explain how the abelianization of an infinite-index (finitely generated) subgroup $H$ of a hyperbolic graph of free groups amalgamated along cyclic subgroups $G$ can be promoted to a direct factor in the abelianization of a finite-index subgroup $G'$ of $G$.

\begin{defn}
    Let $G$ be a group. A subgroup $H\le G$ is 
    \begin{enumerate}
        \item \emph{separable} (in $G$) if for every $g\in G - H$ there exists a finite-index subgroup $G'\le G$ such that $H \le G'$ and $g \notin G'$.
        \item a \emph{virtual retract} (of $G$) if there exists a finite-index subgroup $G'\le G$ containing $H$ and a retraction $r:G'\twoheadrightarrow H$.
    \end{enumerate}
    We say that $G$ is \emph{subgroup separable} (or \emph{LERF}, which stands for \emph{locally extended residually finite}) if every finitely generated subgroup of $G$ is separable; we say that $G$ admits \emph{local retractions} if every finitely generated subgroup of $G$ is a virtual retract.
\end{defn}

Being a virtual retract of a group $G$ is stronger than being separable in $G$:

\begin{lemma}[{\cite[Lemma 2.2]{min:vr}, \cite[Lemma 3.9]{wise:res}}]
    Let $G$ be a residually finite group and let $H$ be a subgroup. If $H$ is a virtual retract of $G$ then $H$ is separable in $G$. In particular, if $G$ admits local retractions then $G$ is subgroup separable.
\end{lemma}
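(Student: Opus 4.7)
The plan is to reduce the separability of $H$ in $G$ to separating elements of $G'$ from $H$ using the internal product structure $G' = H \cdot \ker(r)$ afforded by a retraction, and then to promote a non-trivial kernel element to a non-trivial element of a finite quotient via residual finiteness of $G$.

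Fix a finite-index subgroup $G' \le G$ containing $H$, together with a retraction $r \colon G' \twoheadrightarrow H$, and let $g \in G \setminus H$ be an element we wish to separate from $H$. If $g \notin G'$, then $G'$ itself is a finite-index subgroup of $G$ containing $H$ and missing $g$, so I focus on the case $g \in G' \setminus H$. The natural measure of how $g$ fails to lie in $H$ is the element $\gamma := r(g)^{-1} g$, which lies in $\ker(r)$ (since $r$ restricts to the identity on $H$) and is non-trivial (since $g \notin H$). Using that $G$ is residually finite, I would pick a finite-index normal subgroup $N \trianglelefteq G$ with $\gamma \notin N$, and set
\[ K := H \cdot (\ker(r) \cap N). \]

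The key claim is that $K$ is a finite-index subgroup of $G$ containing $H$ but not $g$. Since $\ker(r)$ is normal in $G'$ (as the kernel of a homomorphism) and $N$ is normal in $G$ (hence in $G'$), their intersection is normal in $G'$, which makes $K$ a genuine subgroup. It has finite index in $G'$ because $\ker(r) \cap N$ has finite index in $\ker(r)$, and therefore finite index in $G$; inclusion of $H$ is immediate. Finally, if $g$ lay in $K$ we could write $g = hn$ with $h \in H$ and $n \in \ker(r) \cap N$; applying $r$ would force $h = r(g)$, and hence $n = \gamma$, contradicting $\gamma \notin N$.

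The only mildly delicate point is verifying that $H \cdot (\ker(r) \cap N)$ really is a subgroup of finite index; this is routine once the semidirect decomposition $G' = H \ltimes \ker(r)$ is made explicit. The ``in particular'' clause follows immediately by applying the first half of the lemma to every finitely generated subgroup of $G$, yielding subgroup separability.
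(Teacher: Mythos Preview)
Your proof is correct. The paper does not supply its own argument for this lemma; it merely cites Minasyan and Hsu--Wise, so there is nothing to compare against beyond checking your reasoning stands on its own, which it does.

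A small remark on the one point you flag as ``mildly delicate'': the finite-index claim for $K = H \cdot (\ker(r) \cap N)$ is indeed routine but deserves one more sentence than you give it. Writing $M = \ker(r) \cap N$, the quotient $G'/M$ is an extension of the finite group $\ker(r)/M$ by $H$, and $K/M$ is a complement to $\ker(r)/M$ in this extension (it meets $\ker(r)/M$ trivially and together they generate $G'/M$ since $K\ker(r) = H\ker(r) = G'$). Hence the left cosets of $K/M$ are in bijection with the finite set $\ker(r)/M$, giving $[G':K] = |\ker(r)/M| < \infty$. Your phrase ``routine once the semidirect decomposition is made explicit'' is accurate, but a reader might appreciate seeing this spelled out. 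Alternatively, the standard proof avoids this bookkeeping entirely by using the map $\Psi = (\psi|_{G'}, \psi \circ r) \colon G' \to Q \times Q$ for a finite quotient $\psi \colon G \to Q$ separating $\gamma$ from $1$: then $\Psi(H)$ lies in the diagonal while $\Psi(g)$ does not, so $\Psi^{-1}(\Psi(H))$ is the desired finite-index subgroup. Both routes are fine.
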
    

Moreover, a standard argument implies that admitting a retract onto a subgroup has strong homological consequences:

\begin{lemma}
    \label{lem:direct_factor}
    Let $G$ be a group and let $H\le G$. If there is a retract $r:G\twoheadrightarrow H$ then $H_n(H;\mathbb{Z})$ embeds as a direct factor in $H_n(G;\mathbb{Z})$.
\end{lemma}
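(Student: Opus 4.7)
The plan is to apply the functoriality of group homology directly. A retraction $r:G \twoheadrightarrow H$ comes with an inclusion $i:H \hookrightarrow G$ satisfying $r \circ i = \mathrm{id}_H$. Since $H_n(-;\mathbb{Z})$ is a functor from groups to abelian groups, applying it to this identity yields
\[
r_* \circ i_* = \mathrm{id}_{H_n(H;\mathbb{Z})}
\]
as homomorphisms of abelian groups.

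First I would observe that this identity immediately forces $i_*:H_n(H;\mathbb{Z}) \to H_n(G;\mathbb{Z})$ to be injective, since any map with a left inverse is injective. Then I would note that the identity also exhibits $i_*$ as a \emph{split} monomorphism in the category of abelian groups, with splitting $r_*$. Standard linear algebra for abelian groups then gives the internal direct sum decomposition
\[
H_n(G;\mathbb{Z}) = \mathrm{im}(i_*) \oplus \ker(r_*),
\]
with $\mathrm{im}(i_*) \cong H_n(H;\mathbb{Z})$, which is exactly the desired conclusion.

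There is essentially no obstacle here: the lemma is a formal consequence of functoriality plus the well-known fact that split short exact sequences of abelian groups produce direct sum decompositions. The only thing to be slightly careful about is the identification of $\mathrm{im}(i_*)$ with $H_n(H;\mathbb{Z})$, which is immediate from injectivity of $i_*$. No special features of the groups $G$ or $H$ (hyperbolicity, graph-of-groups structure, etc.) are used; the statement is valid for any pair of groups related by a retraction.
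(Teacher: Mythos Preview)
Your proof is correct and follows essentially the same approach as the paper: both use functoriality of $H_n(-;\mathbb{Z})$ to obtain $r_*\circ i_*=\mathrm{id}$, deduce injectivity of $i_*$, and invoke the splitting of the resulting short exact sequence of abelian groups to get the direct sum decomposition. The only cosmetic difference is that the paper writes out the short exact sequence explicitly and cites the splitting lemma, whereas you state the decomposition $H_n(G;\mathbb{Z})=\mathrm{im}(i_*)\oplus\ker(r_*)$ directly.
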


\begin{proof}
Denote by $i:H\hookrightarrow G$ the inclusion map, and consider the induced maps on homology
\[
\xymatrix{H_n(H;\mathbb{Z}) \ar[rr]^{i_*} && H_n(G;\mathbb{Z}) \ar[rr]^{r_*} && H_n(H;\mathbb{Z}).}
\]

Since the composition $r_* \circ i_*$ is the identity, $i_*$ is injective. This gives rise to the following short exact sequence of abelian groups
\[
\xymatrix{ 
0 \ar[rr] && H_n(H; \mathbb{Z}) \ar[rr]^{i_*} && H_n(G;\mathbb{Z}) \ar@{->>}[rr] && \nicefrac{H_n(G;\mathbb{Z})}{\mathrm{im}(i_*)} \ar[rr] && 0 
}.\]

The splitting lemma, combined with the existence of the map $r_*$, implies that this sequence is split, giving the desired result.

\end{proof}

Hsu and Wise showed that hyperbolic graphs of free groups with cyclic edges are cubulated \cite[Main Theorem]{wise:cube}; this implies that they are virtually special \cite[Theorem 1.1]{agol}. Special groups virtually retract onto their quasiconvex subgroups \cite[Section 6]{wise:special}, and hyperbolic graphs of free groups with cyclic edges are locally quasiconvex, meaning that each of their finitely generated subgroups is quasiconvex \cite[Theorem D]{wise:local}. Combining these results with \Cref{lem:direct_factor} implies:

\begin{cor} \label{cor:vr-direct-factor}
    If $G$ is a hyperbolic graph of free groups amalgamated along cyclic subgroups, then $G$ has a finite-index subgroup $G'$ with the following property: for every finitely generated $H\le G$, there exists a finite-index subgroup $G_H\le G'$ such that $H^{\ab}$ is a direct factor of $G_H^{\ab}$.
\end{cor}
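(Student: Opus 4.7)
The plan is to combine the three separability results cited just above the statement with \Cref{lem:direct_factor}, which promotes a virtual retraction into a direct-factor decomposition on abelianizations. The corollary is essentially bookkeeping on top of these deep inputs, so the main work is to line them up in the right order.

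First, I would produce the finite-index subgroup $G'$. By the Hsu--Wise cubulation theorem, $G$ acts properly and cocompactly on a CAT(0) cube complex, so Agol's theorem supplies a finite-index subgroup $G'\le G$ that is (compact) special. This $G'$ will serve as the uniform finite-index subgroup asserted in the corollary; crucially, $G'$ is chosen once and for all, independently of $H$.

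Next, given a finitely generated $H\le G$, I would replace $H$ by $H\cap G'$, which is finite-index in $H$ (hence still finitely generated) and lies in $G'$; since passing to a subgroup of finite index does not affect the desired statement, I may as well assume $H\le G'$ from the outset. By the Bigdely--Wise local quasiconvexity theorem, every finitely generated subgroup of $G$ is quasiconvex, so $H$ is quasiconvex in $G$ and therefore also in $G'$. Haglund--Wise's theorem that quasiconvex subgroups of a compact special group are virtual retracts then produces a finite-index subgroup $G_H\le G'$ with $H\le G_H$, together with a retraction $r:G_H\twoheadrightarrow H$.

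Finally, applying \Cref{lem:direct_factor} to $r$ in degree $n=1$ yields that $H^{\ab}=H_1(H;\mathbb{Z})$ embeds as a direct factor of $G_H^{\ab}=H_1(G_H;\mathbb{Z})$, which is exactly the conclusion sought. There is no substantial obstacle: the only point that merits a moment's care is that $G'$ is selected before $H$ and works uniformly for all finitely generated subgroups, which is automatic since compact specialness is a property of $G'$ alone and the Haglund--Wise retraction theorem applies to every quasiconvex subgroup of $G'$.
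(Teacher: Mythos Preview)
Your approach is essentially the paper's: Hsu--Wise cubulation plus Agol produce a special finite-index $G'\le G$; Bigdely--Wise local quasiconvexity makes every finitely generated subgroup quasiconvex; Haglund--Wise then supplies the virtual retraction; and \Cref{lem:direct_factor} converts that into a direct-factor statement on abelianizations. The paper's own argument is just the paragraph preceding the corollary, and it lines up exactly these inputs.

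There is, however, one step that does not go through as written. You replace $H$ by $H\cap G'$, claiming that ``passing to a subgroup of finite index does not affect the desired statement.'' But the conclusion concerns $H^{\ab}$, and $(H\cap G')^{\ab}$ is in general a genuinely different abelian group: for instance, the Klein bottle group has abelianization $\mathbb{Z}\oplus\mathbb{Z}/2\mathbb{Z}$, while its index-$2$ orientable subgroup has abelianization $\mathbb{Z}^2$---the torsion is lost. So proving the statement for $H\cap G'$ does not recover it for $H$. The honest fix is simply to restrict to $H\le G'$: the Haglund--Wise retraction theorem applies to quasiconvex subgroups \emph{of} the special group $G'$, and every application of the corollary in the paper constructs $H$ inside a chosen finite-index subgroup anyway, so one may arrange $H\le G'$ from the outset. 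As stated (for arbitrary $H\le G$ with $G_H\le G'$), the corollary asserts slightly more than the cited ingredients immediately give, and your reduction does not close that gap.
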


\section{Free groups and pairs} \label{pairs}

To study the structure of graphs of free groups with cyclic edge groups, it is helpful to adopt a local perspective, focusing on the interaction between a vertex group and its adjacent edges. This motivates the notion of a \emph{pair}, which encodes precisely this local data. Our definition is in the spirit of Wilton's framework (see \cite[Section 1]{Wil18}), with some variations tailored to suit the constructions and arguments developed later on. As one might expect given our setting, we restrict attention to pairs where the vertex group is free and the adjacent edge groups are cyclic. These serve as natural building blocks for the constructive arguments in Sections~\ref{sec:branched} and~\ref{sec:main}. We give special attention to pairs arising from surfaces and their boundaries, whose behaviour in the abelianization will play a central role in our arguments.

\subsection{Pairs and peripheral structures on free groups}
\label{subsec:pairs}
Let $F$ be a finitely generated free group and let $w\in F$ be a non-trivial element. We denote the conjugacy class of $w$ by $[w]_F$; when there is no ambiguity regarding the ambient group $F$, we abuse notation and simply write $[w]$. 

\begin{defn} \label{def:peri}
    A \emph{peripheral structure} on $F$ consists of a set of pairwise distinct conjugacy classes $[w_1],\dots,[w_n]$ of non-trivial elements of $F$. We refer to $(F,\{[w_1],\dots,[w_n]\})$ as a \emph{pair}.    
\end{defn}

\begin{rmk}
    Some authors define peripheral structures on $F$ as conjugacy classes of maximal cyclic subgroups of $F$, rather than as conjugacy classes of elements (or cyclic subgroups). We chose to use the latter, as it allows us to keep better track of attaching maps when constructing covers (and precovers) of graphs of free groups with cyclic edges in Sections \ref{sec:branched} and \ref{sec:main}.
\end{rmk}

\begin{notation}
    Given a tuple of elements $(w_1,\ldots,w_n)\in F^n$, we will often abbreviate and write $\underline{w}=(w_1,\dots,w_n)$. Consequently, we will denote by $[\underline{w}]$ the peripheral structure $\{[w_1],\dots,[w_n]\}$ and by $(F,[\underline{w}])$ the corresponding pair.
\end{notation}

In \Cref{subsec:precovers}, we recalled the notion of (geometric) elevations (see \Cref{def:geom-elev}), which were introduced to describe the graph of spaces structure inherited by a covering of a graph of spaces. We now give an algebraic definition of elevations, phrased in terms of conjugacy classes of elements (that is, in terms of peripheral structures), which coincides with the geometric definition in the finite-degree case.

\begin{defn}[Elevation]
    Let $F$ be a finitely generated free group, let $1\ne w \in F$ and let $H\le F$. An \emph{elevation} of $[w]_F$ to $H$ is a conjugacy class $[u]_H$ of an element $u\in H$ satisfying the following conditions:
    \begin{enumerate}
        \item $u=w_0^d$ for some $d\ge1$ and some representative $w_0\in [w]_F$.
        \item $d$ is the smallest integer for which $w_0^d\in H$.
    \end{enumerate}
    In that case, $d$ is called the \emph{degree} of the elevation $[u]_H$, denoted by $d=\deg_{[w]_F}[u]_H$ (or simply $d=\deg_{[w]}[u]$ when it is clear who $F$ and $H$ are).
\end{defn}

\begin{rmk}
\label{rmk:elevations}
    Represent $F$ as the fundamental group of a graph $X$, and let $\theta:Y\rightarrow X$ be the covering map corresponding to the subgroup $H\le F$. If $H$ is of infinite index, replacing $Y$ with its core graph and replacing $\theta$ by an immersion (as in \Cref{fig:elevation}) is often more convenient, and bears no consequences on what follows. The conjugacy class $[w]_F$ can be represented by means of a continuous map $i:S^1\rightarrow X$, and the conjugacy class $[u]_H$ can be similarly represented by some $j:S^1\rightarrow Y$. We have that $[u]_H$ is an elevation of $[w]_F$ if and only if the map $j$ can be chosen to be a finite-degree (geometric) elevation of the map $i$, as described in \Cref{ex:geometric-elevation}. Note that in this case, the degree of the elevation $\deg_{[w]}[u]$ coincides with the degree of the (geometric) elevation. Recall that (geometric) elevations can have infinite degree, and such elevations are not represented by any conjugacy class $[u]_H$. In what follows, we will only be using finite-degree elevations.
\end{rmk}

\begin{notation}{\label{note:geom-rep}}
    Following \Cref{rmk:elevations}, we will often denote maps such as $i:S^1 \rightarrow X$ described above by $w:S^1\rightarrow X$; more generally given a peripheral structure $[\underline{w}]=\{[w_1],\ldots,[w_n]\}$ on $F$, we abuse notation and use $\underline{w}$ to denote the disjoint union of the maps $w_i:S^1 \rightarrow F$ , that is
    \[
    \underline{w}=\Big(\bigsqcup\limits_{i=1}^n w_i\Big):\bigsqcup \limits_{i=1}^n S^1 \longrightarrow X.
    \]
\end{notation}

\begin{rmk}
    It is straightforward to see that an elevation of an elevation is itself an elevation: suppose that $K\le H\le F$ and let $v\in K$, $u\in H$ and $w\in F$ be non-trivial elements. If $[v]_K$ is an elevation of $[u]_H$ and $[u]_H$ is an elevation of $[w]_F$, then $[v]_K$ is an elevation of $[w]_F$.
\end{rmk}

\begin{figure}[h]
    \centering
    \includegraphics[width=\linewidth]{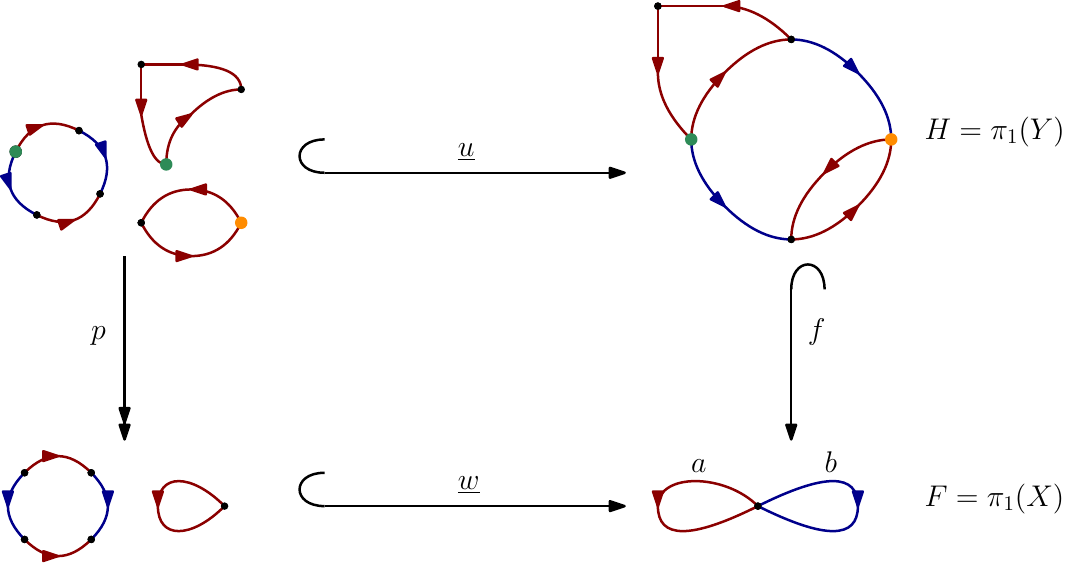}
    \caption{Let $X$ be the rose graph with two petal and let $F=\langle a,b \rangle$ be its fundamental group. Let $w_1=a$ and $w_2=[a,b]$, and let $\underline{w}:S^1 \sqcup S^1 \rightarrow X$ be the map realizing the conjugacy classes $[w_1]$ and $[w_2]$. The (pullback) diagram above describes the elevations of $[\underline{w}]$ to the subgroup $H=\gen{a^3, [a,b],ba^2b^{-1}}$ of $F$. The graph $Y$ at the top-right corner, whose basepoint appears in green, is the core graph of the cover corresponding $H$. The three elevations corresponding to the conjugacy classes $[a^3]$ and $[ba^2b^{-1}]_H$ (of degrees $3$ and $2$ over $[a]_F$ respectively) and $[[a,b]]_H$ (degree $1$ over $[[a,b]]_F$) form the map $\underline{u}:S^1 \sqcup S^1 \sqcup S^1 \rightarrow Y$.}
    \label{fig:elevation}
\end{figure}

\begin{lemma}\label{lem:finitely-many-elevations}
    Let $F$ be a free group, let $[w]_F$ be a non-trivial conjugacy class and let $H\le F$ be a finitely generated subgroup. Then $[w]_F$ has finitely many elevations $[u]_H$ to $H$.
\end{lemma}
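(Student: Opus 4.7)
The plan is to translate the statement into the geometric picture set up in \Cref{rmk:elevations} and \Cref{ex:geometric-elevation}, and then exploit the finiteness of the Stallings core graph of a finitely generated subgroup of a free group.

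First, I would realize $F$ as $\pi_1(X,x_0)$ for a finite graph $X$ (say a rose), and represent $[w]_F$ by a reduced (immersed) map $w:S^1\to X$. The subgroup $H\le F$ corresponds to a based covering $\theta:(Y,y_0)\twoheadrightarrow(X,x_0)$. Since $H$ is finitely generated, $Y$ admits a finite \emph{core} subgraph $C_H\subseteq Y$ with $\pi_1(C_H)\cong \pi_1(Y)\cong H$; in particular $C_H$ has finitely many vertices.

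Next, I would invoke \Cref{rmk:elevations} to identify finite-degree elevations $[u]_H$ of $[w]_F$ with (based) finite-degree geometric elevations $\beta:S^1\to Y$ of $w$. Because $w$ is reduced, any such finite-degree geometric elevation is itself reduced, and hence factors through the core $C_H$. So it suffices to bound the number of geometric elevations of $w$ that land in $C_H$.

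Following the construction in \Cref{ex:geometric-elevation}, each such geometric elevation is determined by choosing a starting vertex $v_0\in C_H$ and iteratively lifting $w$, producing a sequence $v_0,v_1,v_2,\dots$ of vertices of $C_H$; the elevation exists (with finite degree) precisely when some $v_d=v_0$, and two starting vertices that appear on the same cyclic sequence yield the same elevation. Hence the map from the (finite) vertex set of $C_H$ to the set of finite-degree elevations is surjective, and there can be at most $|V(C_H)|$ elevations in total.

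There is no real obstacle here: the only point that requires a moment's care is ensuring the geometric picture captures all \emph{algebraic} finite-degree elevations, which is exactly the content of \Cref{rmk:elevations}, and that reducedness of $w$ forces its lifts to stay inside the core. The bound $|V(C_H)|<\infty$ then finishes the proof.
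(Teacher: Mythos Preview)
Your proposal is correct and follows essentially the same approach as the paper's own proof: represent $F$ by a graph, pass to the finite core of the cover corresponding to $H$, observe that reduced elevations of a reduced loop must land in the core, and bound their number by the (finite) number of possible starting vertices as in \Cref{ex:geometric-elevation}. Your explicit bound $|V(C_H)|$ is a minor sharpening, but the argument is otherwise identical.
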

\begin{proof}
    Represent $F$ as the fundamental group of a graph $X$, and let $p:Y'\rightarrow X$ be the covering map corresponding to $H$. Since $H$ is finitely generated, $Y'$ admits a compact core $Y\subseteq Y'$ with isomorphic fundamental group $\pi_1(Y)=\pi_1(Y')$. If $[w]_F$ is represented by a reduced loop in $X$, then every (finite-degree) elevation of $[w]_F$ to $Y$ is represented by a reduced loop in $Y'$, and must therefore be contained in $Y$. Since $Y$ is finite, there are only finitely many choices for the ``starting point'' of an elevation (as in \Cref{ex:geometric-elevation}), and thus there are only finitely many elevations $[u]_H$ of $[w]_F$ to $H$.
\end{proof}

As mentioned in the beginning of this section, pairs are meant to describe the relationship between a vertex group of a graph of groups and its adjacent edge groups. To make this assertion precise, let $\mathcal{G}=(\Gamma,\{G_v\},\{G_e\},\{\varphi_e\})$ be a graph of groups with free vertex groups and infinite cyclic edge groups, i.e. $G_e\cong\mathbb{Z}$ for all $e\in E(\Gamma)$, and fix a generator $z_e\in G_e$ for every $e\in E(\Gamma)$. For a vertex $v\in V(\Gamma)$, we consider the conjugacy classes $[\varphi_e(z_e)]_{G_v}$ where $e$ ranges over all edges $e\in E(\Gamma)$ with terminal endpoint $\tau(e)=v$. We say that $\mathcal{G}$ \emph{induces} the peripheral structure $[\underline{w}]=[\varphi_e(z_e)]_{G_v}$ on $G_v$, and refer to the pair $(G_v,[\underline{w}])$ as the \emph{induced pair} (at $G_v$). We are interested in understanding the peripheral structures induced by covers (and precovers) of $\mathcal{G}$ on their vertex groups; to this end we define:

\begin{defn}[Subpairs]\label{def:le-pair}
    Let $(H,[\underline{u}]_H),(F,[\underline{w}]_F)$ be two pairs.
    \begin{enumerate}
        \item We say that $(H,[\underline{u}]_H)$ is a \emph{subpair} of $(F,[\underline{w}]_F)$, denoted by $(H,[\underline{u}]_H)\lepair (F,[\underline{w}]_F)$, if $H\le F$ and every conjugacy class in $[\underline{u}]_H$ is an elevation of some conjugacy class in $[\underline{w}]_F$.
        \item If $(H,[\underline{u}]_H)\lepair (F,[\underline{w}]_F)$ we define the \emph{peripheral closure} of $[\underline{u}]_H$ in $[\underline{w}]_F$ to be
        \[
            \pcl{[\underline{w}]_F}{[\underline{u}]_H}:=\{[w_i] \in [\underline{w}] \; \vert \; [\underline{u}] \text{ contains at least one elevation of } [w_i]\}.
        \]
    \end{enumerate}
\end{defn}

\begin{defn}[Pull-back structure]\label{def:pull-back-structure}
    Let $(F,[\underline{w}]_F)$ be a pair and let $F'\le F$ be a finite index subgroup. The \emph{pull-back structure} of $[\underline{w}]_F$ on $F'$ is the set $[\underline{w}']_{F'}$ of all elevations of elements of $[\underline{w}]_F$ to $F'$. The subpair $(F',[\underline{w}'])\lepair (F,[\underline{w}])$ is called a \emph{finite-index pull-back pair}.
\end{defn}

Note that if $(F,[\underline{w}])$ is a pair and $F'\le F$ is a finite-index subgroup, then by \Cref{lem:finitely-many-elevations} the pull-back structure $[\underline{w}']_{F'}$ contains only finitely many conjugacy classes. It follows that $(F',[\underline{w}'])$ is a pair (and $(F',[\underline{w}'])\lepair (F,[\underline{w}])$). \par \smallskip

With these notions in place, we now have the terminology to discuss peripheral structures induced by coverings of $\mathcal{G}$ on their vertex groups. Two particular cases of interest are the following:
\begin{enumerate}
    \item Let $\mathcal{H}$ be a finite cover of $\mathcal{G}$, let $G_v$ be a vertex group of $\mathcal{G}$ and let $H_u$ be a vertex group of $\mathcal{H}$ that lies above $G_v$. Choosing an appropriate basepoint, we may assume that $H_u \le G_v$. Denote by $(G_v, [\underline{w}])$ the induced pair at $G_v$. In this case, the peripheral structure $[\underline{w}']_{H_u}$ induced on $H_u$ by $\mathcal{H}$ coincides with the pull-back structure of $[\underline{w}]$ on $H_u$, and the induced pair $(H_u,[\underline{w}'])$ is the corresponding finite-index pull-back pair $(H_u,[\underline{w}'])\le(G_v,[\underline{w}])$.
    \item Similarly, let $\mathcal{K}$ be a precover of $\mathcal{G}$, let $G_v$ be a vertex group of $\mathcal{G}$ and let $K_u$ be a vertex group of $\mathcal{K}$ such that $K_u \le G_v$. As before, we denote by $(G_v,[\underline{w}])$ and $(K_u,[\underline{w}'])$ the pairs induced respectively by $\mathcal{G}$ and $\mathcal{K}$ on $G_v$ and $K_u$. Then we have that $(K_u,[\underline{w}']) \le (G_v, [\underline{w}])$. \par \smallskip
    Let $X_v$ be a graph with fundamental group $G_v$ and let $Y_u$ be the core graph of the cover corresponding to $K_u$. Bring to mind that in \Cref{def:le-pair} we did not require the peripheral structure on a subpair to include all possible elevations. The elevations of $[\underline{w}]$ to $K_u$ which do not appear in $[\underline{w}']$ are \emph{exactly} the finite degree (geometric) hanging elevations of $\underline{w}:\bigsqcup S^1 \rightarrow X_v$ to $Y_u$.
\end{enumerate}

Constructing precovers by gluing pieces together, as we will do many times in this paper, requires precise control over the elevations of conjugacy classes to (infinite-index) subgroups. To handle this, we introduce the following notion:

\begin{defn}[Embedded subpairs]\label{def:embed-pair}
    Let $(H,[\underline{u}]_H)$ and $(F,[\underline{w}]_F)$ be two pairs. We say that $(H,[\underline{u}]_H)$ \emph{embeds} in $(F,[\underline{w}]_F)$, and denote $(H,[\underline{u}]_H)\embedpair (F,[\underline{w}]_F)$, if  the following conditions hold:
    \begin{enumerate}
        \item $(H,[\underline{u}]_H)\lepair (F,[\underline{w}]_F)$.
        \item Every $[u]_H\in[\underline{u}]$ is degree-1 elevation of $[u]_F\in[\underline{w}]$.
        \item The map $[\underline{u}]\rightarrow [\underline{w}]$ sending $[u]_H\mapsto[u]_F$ is injective.
    \end{enumerate}
\end{defn}

A direct implication of Marshall Hall's theorem is that every subpair of $(F,[\underline{w}])$ embeds in a finite-index pull-back pair $(F',[\underline{w}'])\lepair (F,[\underline{w}])$:

\begin{lemma}\label{lem:fi-embed-1}
    For every $(K,[\underline{u}])\lepair (F,[\underline{w}])$ there is a finite-index subgroup $F'\le F$ and a corresponding finite-index pull-back pair $(F',[\underline{w}'])\le (F,[\underline{w}])$ such that $(K,[\underline{u}])\embedpair (F',[\underline{w}'])$.
\end{lemma}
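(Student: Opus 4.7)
\emph{Plan.} For each $[u_i]_K \in [\underline{u}]$ I would first fix a representative $w_{0,i}$ of the elevation so that $u_i = w_{0,i}^{d_i}$, where $w_{0,i} \in [w_{j(i)}]_F$ for some index $j(i)$ and $d_i \ge 1$ is the smallest positive integer with $w_{0,i}^{d_i} \in K$. The strategy is to construct $F'$ of finite index in $F$ that contains $K$ and achieves two things simultaneously: (a) for each $i$, no proper power $w_{0,i}^{d'}$ with $0 < d' < d_i$ lies in $F'$, so that $d_i$ remains the minimal power in $F'$ and $[u_i]_{F'}$ itself enters the pull-back structure $[\underline{w}']$; and (b) the assignment $[u_i]_K \mapsto [u_i]_{F'}$ is injective. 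These two requirements will be addressed by separability and Marshall Hall's theorem, respectively.

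By the minimality of $d_i$, each element $w_{0,i}^{d'}$ with $0 < d' < d_i$ lies outside $K$. Since free groups are LERF, for each such $(i, d')$ I can find a finite-index subgroup of $F$ containing $K$ but missing $w_{0,i}^{d'}$; intersecting the finitely many resulting subgroups yields a finite-index $F_1 \le F$ with $K \le F_1$ and $w_{0,i}^{d'} \notin F_1$ for all $i$ and all $0 < d' < d_i$. The subgroup $F_1$ is itself a finitely generated free group, and $K$ is finitely generated, so Marshall Hall's theorem applied to $K \lepair F_1$ produces a finite-index subgroup $F' \le F_1$ in which $K$ is a free factor, say $F' = K * L$. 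I would take $[\underline{w}']$ to be the pull-back structure of $[\underline{w}]$ on $F'$, so that $(F', [\underline{w}']) \lepair (F, [\underline{w}])$ is a finite-index pull-back pair.

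To verify the three conditions of \Cref{def:embed-pair}, the containment $K \le F'$ is clear. For the degree-$1$ elevation condition, the inclusion $F' \subseteq F_1$ gives $w_{0,i}^{d'} \notin F'$ for $0 < d' < d_i$, while $u_i = w_{0,i}^{d_i} \in K \le F'$; thus $d_i$ is the minimal positive power of $w_{0,i}$ landing in $F'$, so $[u_i]_{F'}$ is an honest elevation of $[w_{j(i)}]_F$ of degree $d_i$, belongs to $[\underline{w}']$, and $[u_i]_K$ is its degree-$1$ elevation to $K$. For the injectivity condition, if $[u_i]_K \ne [u_{i'}]_K$ are distinct elements of $[\underline{u}]$, both $u_i, u_{i'}$ sit inside the free factor $K$ of $F' = K * L$; the standard consequence of the Kurosh subgroup theorem that two elements of a free factor are conjugate in the ambient free product if and only if they are conjugate in the factor itself then gives $[u_i]_{F'} \ne [u_{i'}]_{F'}$, proving $(K, [\underline{u}]) \embedpair (F', [\underline{w}'])$.

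\emph{Main obstacle.} Neither Marshall Hall's theorem nor LERF on its own suffices: Marshall Hall places $K$ as a free factor (giving injectivity via Kurosh), but a generic Marshall Hall extension may well contain smaller powers of the $w_{0,i}$, breaking the degree-$1$ elevation condition; conversely, LERF can separate out those smaller powers but yields no free-factor configuration. The key observation is that these two obstructions are independent and can be resolved in sequence, first shrinking to $F_1$ by LERF and then applying Marshall Hall inside $F_1$.
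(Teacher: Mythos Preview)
Your proof is correct. The paper does not spell out an argument at all; it simply asserts that the lemma is ``a direct implication of Marshall Hall's theorem'' and moves on. Your two-step approach (LERF to separate out the unwanted smaller powers, then Marshall Hall to obtain the free-factor configuration) is clean and works exactly as you describe.

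That said, the LERF preprocessing is not actually needed, and this is presumably what the paper has in mind. In the Stallings core-graph formulation of Marshall Hall, one completes the core graph $Y$ of $K$ to a finite-sheeted cover $Y'$ of the rose by adding only edges and \emph{no new vertices}. Each elevation $[u_i]_K$ is represented by a cyclically reduced loop $\gamma_i$ in $Y$ reading $w_{j(i)}^{d_i}$. In $Y'$ the lift of $w_{j(i)}$ starting at any vertex of $\gamma_i$ is uniquely determined and coincides with $\gamma_i$ itself, because $\gamma_i$ already lives in $Y\subseteq Y'$; hence it cannot close up before $d_i$ iterations. So the degree condition comes for free from the geometric construction, and injectivity follows from the free-factor property exactly as in your Kurosh argument. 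In short: your ``main obstacle'' is real if one only invokes the abstract group-theoretic statement of Marshall Hall, but it evaporates once one uses the covering-space version.
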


The property of being an embedded subpair is preserved under taking finite covers.

\begin{lemma}\label{lem:fi-embed-2}
    Suppose that $(K,[\underline{u}])\embedpair (F,[\underline{w}])$ and suppose that none of the elements in $\underline{w}$ is a proper power. Let $F'\le F$ be a finite-index subgroup and define $K'=K\cap F'$. Let $[\underline{w}']$ and $[\underline{u}']$ be the pull-back structures induced by $(F,[\underline{w}])$ and $(K,[\underline{u}])$ on $F'$ and $K'$. Then $(K',[\underline{u}'])\embedpair (F',[\underline{w}'])$
\end{lemma}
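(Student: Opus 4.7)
The plan is to verify the three conditions of \Cref{def:embed-pair} in turn, using minimality of elevation degrees together with the containment $u\in K$ for conditions (1) and (2), and uniqueness of roots in free groups together with the non-proper-power hypothesis for condition (3).

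First I would take any $[u']_{K'}\in[\underline{u}']$, represented by $u'=u^d$ where $u$ represents some class in $[\underline{u}]$ and $d\ge 1$ is minimal with $u^d\in K'$. Letting $e\ge 1$ be minimal with $u^e\in F'$, the class $[u^e]_{F'}$ lies in $[\underline{w}']$ by the hypothesis $(K,[\underline{u}])\embedpair(F,[\underline{w}])$ (which forces $[u]_F\in[\underline{w}]$), and it is immediate that $e\mid d$ since $\langle u\rangle\cap F'=\langle u^e\rangle$, and that $u^e\in K'$ since $u\in K$. Minimality of $d$ gives $d\le e$, forcing $d=e$; hence $u'$ itself represents a class in $[\underline{w}']$, and $[u']_{K'}$ is visibly a degree-$1$ elevation of $[u']_{F'}$. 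This settles conditions (1) and (2).

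For condition (3), I would pick $[u'_1]_{K'},[u'_2]_{K'}\in[\underline{u}']$ mapping to a common class in $[\underline{w}']$, write $u'_i=u_i^{d_i}$ as above, and choose $g\in F'$ with $u'_1=gu'_2g^{-1}$. Since each $u_i$ is $F$-conjugate to an element of $\underline{w}$ and no element of $\underline{w}$ is a proper power, $u_1$ and $u_2$ are not proper powers in $F$. Uniqueness of roots in the free group $F$, applied to the identity $u_1^{d_1}=(gu_2g^{-1})^{d_2}$, then yields $d_1=d_2$ and $u_1=gu_2g^{-1}$. In particular $[u_1]_F=[u_2]_F$, so injectivity of $[\underline{u}]\to[\underline{w}]$ delivers some $k\in K$ with $u_1=ku_2k^{-1}$.

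The final, and most delicate, step is to promote the conjugator $g\in F'$ to an element of $K'=K\cap F'$. For this I would observe that $k^{-1}g$ centralizes $u_2$ in $F$, and since $u_2$ is not a proper power, its centralizer in $F$ equals $\langle u_2\rangle\subseteq K$; hence $g\in K$, and combined with $g\in F'$ this gives $g\in K'$ and $[u'_1]_{K'}=[u'_2]_{K'}$. The main obstacle is exactly this promotion from $F'$-conjugacy to $K'$-conjugacy, and it is where the non-proper-power hypothesis on $\underline{w}$ is essential, both for uniqueness of roots and for the cyclic centralizer argument.
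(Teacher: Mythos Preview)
Your proof is correct and follows essentially the same approach as the paper's: for conditions (1) and (2) you both use that $u\in K$ forces the minimal power landing in $F'$ to agree with the minimal power landing in $K'$, and for condition (3) you both reduce to a centralizer computation using that the relevant elements are not proper powers. The only cosmetic difference is that you invoke uniqueness of roots explicitly to obtain $u_1=gu_2g^{-1}$ with the \emph{same} conjugator $g\in F'$, whereas the paper first deduces $[u_1]_K=[u_2]_K$ and then runs the centralizer argument on $u^d$; both routes land on the same $k^{-1}g\in\langle u\rangle\subseteq K$ step.
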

\begin{proof}
    Let $[s]_{K'}\in[\underline{u}']$; by definition it must be an elevation of some $[u]_K\in[\underline{u}]$. Write $s=u^d$ for $d\ge1$, and note that $d$ is the minimal power of $u$ that lies in $K'$. Since $[\underline{u}]$ embeds in $[\underline{w}]$, we must have that $[u]_F\in[\underline{w}]$. If a power of $u$ belongs to $F'$, then it also belongs to $K'=K\cap F'$ (since $u\in K$). It follows that $u^d\in F'$, and no smaller power of $u$ belongs to $F'$. Thus $[s]_{F'}=[u^d]_{F'}$ belongs to $[\underline{w}']$, and $[s]_{K'}$ is an elevation of $[s]_{F'}$ of degree $1$. \par \smallskip

    Having established that conditions 1 and 2 of \Cref{def:embed-pair} hold, it remains to verify condition 3. Suppose that $[s_1]_{K'},[s_2]_{K'}$ are sent to the same conjugacy class $[s_1]_{F'}=[s_2]_{F'}$ in $[\underline{w}']$. For $i=1,2$ we have that $[s_i]_{K'}$ is an elevation of some $[u_i]_K$; write $s_i=u_i^{d_i}$. Since $[\underline{u}]$ is embedded in $[\underline{w}]$, we have that $[u_i]_F$ belongs to the peripheral structure of $F$, and thus $[s_i]_{F'}$ is an elevation of $[u_i]_F$ of degree $d_i$. As $[s_1]_{F'}=[s_2]_{F'}$, it follows that $[u_1]_F=[u_2]_F$ and $d_1=d_2$. Moreover, the fact that $[\underline{u}]$ is embedded in $[\underline{w}]$ implies that $[u_1]_K=[u_2]_K$. We conclude that $[s_1]_{K'}$ and $[s_2]_{K'}$ are elevations of the same element in $[u]_K$, of the same degree $d$. \par \smallskip

    Without loss of generality, write $s_1=u^d$ and $s_2=ku^dk^{-1}$ for some $k\in K$. Since $[s_1]_{F'}=[s_2]_{F'}$ we deduce that $s_2=hu^dh^{-1}$ for some $h\in F'$. But then $u^d$ commutes with $k^{-1}h$, and thus $k^{-1}h=u^a$ for some integer $a$ (since $u$ isn't a proper power). This means that $h=ku^a$ and in particular $h\in K$, yielding that $h\in K\cap F'=K'$, and thus $[s_1]_{K'}=[s_2]_{K'}$, as desired.
\end{proof}

\begin{rmk}
    The requirement that none of the elements of $\underline{w}$ is a proper power in \Cref{lem:fi-embed-2} is necessary: consider the group $F=\gen{a,b}$ and the finite-index subgroups 
    \[K=\gen{a^2,b,aba} \;\;\;\text{   and   }\;\;\; F'=\gen{a^4,ba^3,aba^2,a^2ba,a^3b},\]
    whose intersection is given by
    \[
        K'=K\cap F'=\gen{a^4,ba^4b^{-1},aba^3b^{-1},a^2ba^2b^{-1},a^3bab^{-1},b^2a^2,baba,ba^2b,ba^3ba^3}.
    \]
    We have that $(K,\{[a^2]\})\embedpair (F,\{[a^2]\})$; however, the finite-index pull-back structures yield the following subpair $(K',\{[a^4],[ba^4b^{-1}]\})\lepair (F',\{[a^4]\})$, which is not embedded.
\end{rmk}

\subsection{Peripheral structures on surfaces with boundary}
\label{subsec:surfaces}\
Compact surfaces with boundary will serve as the primary building blocks in our constructions. In this subsection, we gather fundamental results on finite-sheeted covers of such surfaces, with a particular focus on how their boundaries behave in these covers. \par \smallskip

Let $\Sigma$ be a compact surface with non-empty boundary, and let $[x_1],\ldots,[x_n]$ be free homotopy classes representing the loops in $\partial\Sigma$; assume that each $[x_i]$ comes equipped with some orientation. We will use the notation $(\Sigma, \partial \Sigma)$ to refer to the pair $(F=\pi_1(\Sigma),[\underline{x}])$. Note that if $\Sigma' \twoheadrightarrow \Sigma$ is a finite-sheeted cover, the pair $(\Sigma',\partial \Sigma')$ is simply the finite-index pull-back pair (and in particular $(\Sigma',\partial \Sigma') \le (\Sigma, \partial \Sigma)$). Suppose now that $\Sigma$ is orientable and connected. We denote by $g(\Sigma)$ the genus of $\Sigma$, and by $b(\Sigma)$ the number of boundary components of $\Sigma$. Recall that $\chi(\Sigma)=2-2g(\Sigma)-b(\Sigma)$, and in particular $\chi(\Sigma)\equiv b(\Sigma)\;\mod\;2$. \par \smallskip

One of the perks of working with a peripheral structure given by the boundary of a surface, is the near-complete freedom it provides in constructing finite-sheeted covers whose boundary consists of elevations of prescribed degrees.

\begin{lemma}[\emph{cf. }{\cite[Lemma 3.2]{neu:surfaces}}]\label{lem:covers-of-surfaces}
    Let $\Sigma$ be a compact, connected and orientable surface of genus $\ge1$ and with a non-empty boundary $\partial \Sigma$. Let $d$ be a positive integer. For each boundary component $x$ in $\partial \Sigma$, pick a collection of $C_x$ of degrees summing to $d$. Then the following are equivalent:
    \begin{enumerate}
        \item There is a $d$-sheeted cover $\Sigma'$ of $\Sigma$ such that for every $[x]$ in $\partial \Sigma$, the collection of degrees of elevations of $[x]$ to $\Sigma'$ is $C_x$.
        \item\label{itm:modulo-2} $b(\Sigma')\equiv d\cdot b(\Sigma)\;\mod\;2$.
    \end{enumerate}
\end{lemma}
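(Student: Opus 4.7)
My plan is to prove the two implications separately, translating the problem into combinatorics of the symmetric group $S_d$.

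The implication (1) $\Rightarrow$ (2) is immediate from the multiplicativity of Euler characteristic: for any $d$-sheeted cover $\Sigma'$ of $\Sigma$, we have $\chi(\Sigma') = d \cdot \chi(\Sigma)$, and since $\chi \equiv b \pmod 2$ for compact orientable surfaces (as noted just above the lemma), we conclude $b(\Sigma') \equiv d \cdot b(\Sigma) \pmod 2$.

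For the converse (2) $\Rightarrow$ (1), I would translate the construction of the cover into finding a transitive homomorphism $\rho \colon \pi_1(\Sigma) \to S_d$: under such a $\rho$, the elevation degrees of a boundary loop $x$ to the associated $d$-sheeted cover are exactly the cycle lengths of $\rho(x)$. Using the standard presentation
\[
\pi_1(\Sigma) = \langle a_1, b_1, \dots, a_g, b_g, c_1, \dots, c_{b-1} \rangle,
\]
in which $c_i$ represents the $i$-th boundary loop for $i < b$ and the last boundary loop is represented by $c_b = ([a_1,b_1]\cdots [a_g,b_g]\,c_1\cdots c_{b-1})^{-1}$, I fix permutations $\sigma_i \in S_d$ of cycle type $C_{x_i}$ for $i = 1, \dots, b$, set $\rho(c_i) = \sigma_i$ for $i < b$, and attempt to solve
\[
[\rho(a_1), \rho(b_1)] \cdots [\rho(a_g), \rho(b_g)] = \sigma_b^{-1} \sigma_{b-1}^{-1} \cdots \sigma_1^{-1} =: \tau
\]
for $\rho(a_i), \rho(b_i)$, while ensuring the image is transitive.

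Using $\mathrm{sgn}(\sigma_i) = (-1)^{d - |C_{x_i}|}$, a direct calculation gives $\mathrm{sgn}(\tau) = (-1)^{bd - b(\Sigma')}$, which equals $+1$ precisely under hypothesis (2); hence $\tau \in A_d = [S_d, S_d]$. Since every element of $A_d$ is a single commutator in $S_d$ (a classical fact), and $g \ge 1$, I can realize $\tau = [\rho(a_1), \rho(b_1)]$ and take all remaining commutators to be trivial. This produces a homomorphism $\rho$ with the prescribed cycle types on boundary loops.

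The main obstacle I expect is securing transitivity of $\rho$, which is needed for connectedness of the cover. When $g \ge 2$ this is easy: solve the commutator equation using only the first handle, and use the second handle to boost transitivity by setting $\rho(a_2)$ to be a $d$-cycle and $\rho(b_2) = 1$. The case $g = 1$ is more delicate: I must find $(\alpha_1, \beta_1)$ satisfying $[\alpha_1, \beta_1] = \tau$ such that $\langle \alpha_1, \beta_1, \sigma_1, \dots, \sigma_{b-1} \rangle$ acts transitively on $\{1, \dots, d\}$. I would handle this by exploiting two layers of flexibility: the freedom to conjugate each $\sigma_i$ individually (which preserves its cycle type but can merge orbits) and the freedom in choosing which pair $(\alpha_1, \beta_1)$ realizes the fixed commutator $\tau$. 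If the initial tuple leaves two orbits separated, one can modify the data by a conjugation supported on a carefully chosen transposition to merge them, iterating until transitivity is achieved; the small exceptional cases ($d$ small, $g = 1$, few boundary loops) can be checked by hand.
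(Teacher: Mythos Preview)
The paper does not supply a proof of this lemma; it is stated with a citation to Neumann and used as a black box. So there is no ``paper's proof'' to compare against, and your write-up would in fact be an addition. That said, your approach via monodromy representations into $S_d$ is the standard one and is essentially how the cited result is proved.

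Your treatment of $(1)\Rightarrow(2)$ and of the parity computation for $(2)\Rightarrow(1)$ is correct: the identity $\mathrm{sgn}(\sigma_i)=(-1)^{d-|C_{x_i}|}$ gives $\mathrm{sgn}(\tau)=(-1)^{bd-b(\Sigma')}$, so hypothesis~(2) is exactly $\tau\in A_d=[S_d,S_d]$, and since $g\ge1$ a single commutator suffices.

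The only genuine gap is the transitivity argument when $g=1$. Your proposed fix (``conjugate by a transposition to merge orbits, iterate, check small cases by hand'') is not a proof: conjugating a single $\sigma_i$ changes $\tau$, and modifying $(\alpha_1,\beta_1)$ while preserving $[\alpha_1,\beta_1]=\tau$ is precisely the hard constraint, so the merging step needs to be justified rather than asserted. A clean way to close this is to invoke the classical fact that every element of $A_d$ is a product of two $d$-cycles (Bertram, \emph{J.~Combin.\ Theory Ser.~A}~12 (1972)). Writing $\tau=\gamma_1\gamma_2$ with $\gamma_1,\gamma_2$ both $d$-cycles, choose $\beta$ with $\beta\gamma_1^{-1}\beta^{-1}=\gamma_2$ (possible since $\gamma_1^{-1}$ and $\gamma_2$ are conjugate); then $\tau=[\gamma_1,\beta]$ and $\langle\gamma_1,\beta\rangle$ already contains the $d$-cycle $\gamma_1$, hence is transitive. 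This dispenses with the ad~hoc merging and the unchecked small cases. Alternatively, one can argue geometrically: any disconnected solution has every component of genus $\ge1$ (a short Euler-characteristic check using $g\ge1$), and one then performs a cut-and-reglue along lifts of a fixed nonseparating curve to reduce the number of components; but making this precise still requires care about matching degrees, so the algebraic route above is simpler.
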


As discussed in the \nameref{intro}, a key ingredient in our constructions is the link between $\partial$-equations and orientable surfaces with boundary of positive genus (see \Cref{sec:branched} for more details). The following lemma illustrates how positive genus can be introduced by passing to a finite-sheeted cover:

\begin{lemma} \label{lem:pos-gen}
    Let $\Sigma$ be a compact, connected, orientable surface of genus $0$ and with $r\ge3$ boundary components. Then there is a finite cover $\Sigma'$ of $\Sigma$ of positive genus.
\end{lemma}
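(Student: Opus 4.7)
My plan is to construct the required cover explicitly using a carefully chosen surjection onto a finite cyclic group, and then to verify the positivity of the genus via a direct Euler-characteristic computation.

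Since $\Sigma$ is planar with $r \ge 3$ boundary components, $\pi_1(\Sigma)$ is free of rank $r-1$. More precisely, letting $[x_1], \ldots, [x_r]$ denote the free homotopy classes of the boundary loops (with coherent orientations), the classes $[x_1], \ldots, [x_{r-1}]$ form a free basis of $\pi_1(\Sigma)$, and the relation $[x_1 x_2 \cdots x_r] = 1$ forces $[x_r] = [(x_1 \cdots x_{r-1})^{-1}]$ up to conjugation. I would then define the surjective homomorphism $\varphi: \pi_1(\Sigma) \twoheadrightarrow \mathbb{Z}/3\mathbb{Z}$ by $\varphi(x_i) = 1$ for all $i = 1, \ldots, r-1$, and let $\Sigma' \to \Sigma$ be the connected $3$-sheeted cover corresponding to $\ker \varphi$.

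Multiplicativity of the Euler characteristic gives $\chi(\Sigma') = 3\chi(\Sigma) = 6 - 3r$. To bound the number of boundary components, I would count elevations: each generator $x_i$ with $i \le r-1$ has $\varphi(x_i) = 1$ of order $3$, hence contributes a single elevation of degree $3$ to $\partial \Sigma'$, while $[x_r]$ maps to $-(r-1) \bmod 3$, contributing either one elevation of degree $3$ (if $r \not\equiv 1 \pmod 3$) or three elevations of degree $1$ (if $r \equiv 1 \pmod 3$). In either case $b(\Sigma') \le r + 2$. Plugging into $\chi(\Sigma') = 2 - 2g(\Sigma') - b(\Sigma')$ yields
\[
2g(\Sigma') \ge 2 - (6 - 3r) - (r + 2) = 2r - 6,
\]
which is positive as soon as $r \ge 4$.

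The only remaining case is $r = 3$, which I would handle separately on the spot: here $\varphi(x_3) \equiv -2 \equiv 1 \pmod 3$ has order $3$, so $[x_3]$ contributes a single degree-$3$ elevation and $b(\Sigma') = 3$, giving $g(\Sigma') = 1$ by the same formula. I do not anticipate any real obstacle beyond keeping the elevation bookkeeping straight; once the right cover is written down, the argument reduces to an elementary Euler-characteristic computation with a minor case split for $r = 3$.
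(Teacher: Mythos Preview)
Your proof is correct and follows essentially the same strategy as the paper: map $\pi_1(\Sigma)$ onto a finite cyclic group by sending every boundary generator to $1$, take the corresponding connected cover, and read off the genus from the Euler characteristic together with an explicit count of boundary elevations. The only cosmetic difference is that the paper uses a degree-$2$ cover for $r\ge 4$ and reserves the degree-$3$ cover for the pair of pants $r=3$, whereas you use $\mathbb{Z}/3\mathbb{Z}$ uniformly; your $r=3$ cover is in fact exactly the one the paper writes down.
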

\begin{proof}
We divide the proof into two cases, depending on the number of boundary components of $\Sigma$.
\begin{enumerate}[label=\textbf{Case \arabic*:}]
    \item $b(\Sigma)=r\ge4$. In this case, $\pi_1(\Sigma)$ is a rank $r-1$ free group $F=\gen{a_1,\ldots,a_{r-1}}$, and the peripheral structure $[\partial \Sigma]$ coincides with $\{[a_1],\ldots,[a_{r-1}],[a_1a_2\cdots a_{r-1}]\}$. Consider the index-$2$ subgroup $H\le F$ given by 
    \[H=\gen{a_1^2,a_1a_2,a_2a_1,\dots,a_1a_{r-1},a_{r-1}a_1},\] 
    and observe that the pull-back structure of $[\partial \Sigma]$ on $H$ has exactly $r+(r - 1\;\mod\;2)$ conjugacy classes. If $\Sigma'$ is the cover of $\Sigma$ corresponding to $H$, then 
    \[2-2g(\Sigma')-r-(r - 1 \; \mod \; 2)=\chi(\Sigma')=2\chi(\Sigma)=4-2r,\] 
    and thus $2g(\Sigma')=r-2-(r - 1 \; \mod \; 2)\ge1$ and we are done.
    \item $b(\Sigma)=3$. This time, the pair $(\Sigma,\partial \Sigma)$ coincides with $(F_2=\gen{a,b},\{[a],[b],[ab]\})$. Consider the index-$3$ subgroup $H\le F$ given by 
    \[H=\gen{a^3,ba^2,aba,a^2b},\]
    and similar to the previous case, observe that the pull-back structure of $[\partial \Sigma]$ on $H$ consists of $3$ elevations. To finish, let $\Sigma'$ be the cover of $\Sigma$ corresponding to $H$, and now
    \[2-2g(\Sigma')-3=\chi(\Sigma')=3\chi(\Sigma)=-3.\] 
    Hence $g(\Sigma')=1$ as desired.
    
\end{enumerate}
\end{proof}

The orientation induced by an orientable surface on its boundary also plays an important part in our proofs. We now examine how the orientation on the boundary of a surface behaves under finite covers. To begin, recall some basic facts. Suppose that $\Sigma$ is compact, connected and orientable, write $g(\Sigma)=g$ and denote the boundary components in $\partial \Sigma$ by $x_1,\dots,x_b$. Fix an orientation on $\Sigma$, inducing orientations on $x_1,\dots,x_b$. Then the fundamental group of $\Sigma$ is
\[
    \pi_1(\Sigma)=\pres{s_1,t_1,\dots,s_g,t_g,x_1,\dots,x_b}{[s_1,t_1]\dots[s_g,t_g]x_1\dots x_b}
\]
and its first homology group is free abelian:
\[
    H_1(\Sigma)=\gen{s_1,t_1,\dots,s_g,t_g}\oplus\frac{\gen{x_1,\dots, x_b}}{\gen{x_1+\dots+x_b}},
\]
where the boundary of the fundamental class $[\Sigma]\in H_2(\Sigma,\partial\Sigma)$ is $\partial[\Sigma]=x_1+\dots+x_b\in H_1(\partial\Sigma)$. The following lemma is standard: 

\begin{lemma}[\emph{cf. }{\cite[Exercise 9, Section 3.3]{hatcher}}]
    Let $\Sigma$ be a compact, connected and orientable surface with nonempty boundary. Fix an orientation on $\Sigma$, inducing orientations on its boundary components $x_1,\dots,x_b$. Let $\Sigma'$ be a finite cover of $\Sigma$ (equipped with the orientation inherited from $\Sigma$). Let $x'_{ij}$ be the boundary components covering $x_i$, $1\le i \le b$ (oriented in such a way that $[x_{ij}]$ is a positive-degree elevation of $x_i$). Then
    \[
        \partial[\Sigma']=(x'_{11}+x'_{12}+\dots)+(x'_{21}+x'_{22}+\dots)+\dots+(x'_{b1}+x'_{b2}+\dots).
    \]
\end{lemma}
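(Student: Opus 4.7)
My plan is to reduce the lemma to a purely local computation in collar neighborhoods, exploiting the fact that the covering map $p \colon \Sigma' \to \Sigma$ is orientation-preserving by our choice of orientation on $\Sigma'$.

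First, by the definition of the relative fundamental class of an oriented compact surface with boundary,
\[
\partial[\Sigma'] = \sum_{i,j} [x'_{ij}]_{\mathrm{bd}},
\]
where $[x'_{ij}]_{\mathrm{bd}} \in H_1(\partial \Sigma')$ denotes the class of $x'_{ij}$ equipped with the orientation induced from the ambient orientation of $\Sigma'$ (for example, via the outward-normal-first convention). Thus the content of the lemma reduces to the statement that this induced boundary orientation on $x'_{ij}$ coincides with the positive-degree elevation orientation specified in the hypothesis.

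To verify this, I would use a collar argument. Fix a collar neighborhood $N_i \cong x_i \times [0,1)$ of each boundary circle $x_i$ in $\Sigma$. Since $p$ is a covering map and $\Sigma'$ is a manifold with boundary, the preimage $p^{-1}(N_i)$ decomposes as a disjoint union of collars of the form $x'_{ij} \times [0,1)$, and $p$ restricts on each such piece to $(p|_{x'_{ij}}) \times \mathrm{id}$. Because $p$ preserves the ambient orientation and the inward-normal direction, its restriction $p|_{x'_{ij}} \colon x'_{ij} \to x_i$ must preserve the induced boundary orientations. An orientation-preserving covering map of oriented circles has positive degree, so $[x'_{ij}]$ with the induced boundary orientation is a positive-degree elevation of $[x_i]$, matching the hypothesis and yielding the claimed formula.

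The argument is essentially bookkeeping; the only point requiring care is to be sure that the ambient-to-boundary orientation convention is tracked consistently through the covering map, and this is handled by the collar decomposition. An alternative, more formal, write-up could invoke the transfer homomorphism $\tau \colon H_*(\Sigma, \partial \Sigma) \to H_*(\Sigma', \partial \Sigma')$ for finite covers, noting that $\tau[\Sigma] = [\Sigma']$ since $p$ is orientation-preserving, and that $\tau$ commutes with the connecting map; but the collar approach is the most direct and makes the orientation conventions transparent.
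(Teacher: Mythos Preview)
Your argument is correct. Note, however, that the paper does not actually supply a proof of this lemma: it merely declares the result to be standard and cites it as an exercise in Hatcher, so there is nothing to compare against. Your collar-neighborhood reduction---observing that an orientation-preserving covering map preserves the inward normal and hence restricts to an orientation-preserving (positive-degree) map on each boundary circle---is exactly the kind of direct verification one would expect for this exercise, and the alternative transfer-map argument you sketch is also valid.
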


We seal the subsection by describing the boundary of the fundamental class in the orientable double cover of a non-orientable surface.

\begin{lemma}\label{lem:boundary-non-orient}
    Let $\Sigma$ be a compact, connected and non-orientable surface with nonempty boundary. Fix an arbitrary orientation on each of the boundary components $x_1,\dots,x_b$ of $\Sigma$. Let $\Sigma'$ be the orientable $2$-sheeted cover of $\Sigma$; then we have the following:
    \begin{enumerate}
        \item $\Sigma'$ has boundary components $x_i',x_i''$ (each covering $x_i$ with positive degree $1$), for $i=1,\dots,b$.
        \item The boundary of the fundamental class of $\Sigma'$ is
        \[
            \partial[\Sigma']=(x_1'-x_1'')+\dots+(x_b'-x_b'')
        \]
        (up to swapping between $x_i'$ and $x_i''$ for $i=1,\dots,b$).
    \end{enumerate}
\end{lemma}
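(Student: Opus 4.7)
The plan is to build the proof around the orientation double cover and its deck transformation. The orientable $2$-sheeted cover $p:\Sigma'\to\Sigma$ corresponds to the kernel of the \emph{orientation homomorphism} $\omega:\pi_1(\Sigma)\to\mathbb{Z}/2$, which sends a loop $\gamma$ to $0$ or $1$ according to whether a frame transported along $\gamma$ returns with the same orientation. This cover comes equipped with a non-trivial deck transformation $\tau:\Sigma'\to\Sigma'$ which is \emph{orientation-reversing} (since $\Sigma$ is non-orientable), so in particular $\tau_*[\Sigma']=-[\Sigma']$ in $H_2(\Sigma',\partial\Sigma';\mathbb{Z})$. Both facts will be used repeatedly below.

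For part (1), the point is that every boundary circle $x_i$ admits a collar neighbourhood $x_i\times[0,\varepsilon)$ in $\Sigma$, which is an annulus and hence orientable. Consequently $\omega([x_i])=0$, so $[x_i]$ lies in the subgroup defining the double cover and therefore lifts to a loop in $\Sigma'$. This means that $p^{-1}(x_i)$ splits into two connected components $x_i',x_i''$, and each maps to $x_i$ by a homeomorphism, i.e.\ a degree-$1$ cover. Orient $x_i'$ and $x_i''$ so that these homeomorphisms are orientation-preserving with respect to the chosen orientation of $x_i$; with these choices the deck transformation $\tau$ swaps $x_i'$ with $x_i''$, and because $p\circ\tau=p$ with all three restrictions of degree $+1$, one has $\tau_*[x_i']=[x_i'']$ and $\tau_*[x_i'']=[x_i']$ in $H_1(\partial\Sigma';\mathbb{Z})$.

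For part (2), I expand
\[
\partial[\Sigma']=\sum_{i=1}^{b}\bigl(a_i[x_i']+b_i[x_i'']\bigr)\in H_1(\partial\Sigma';\mathbb{Z})
\]
(recall that $H_1(\partial\Sigma';\mathbb{Z})$ is free abelian on the classes $[x_i'],[x_i'']$, so the coefficients are well-defined). Applying $\tau_*$ and using both $\tau_*(\partial[\Sigma'])=\partial(\tau_*[\Sigma'])=-\partial[\Sigma']$ and the action computed in the previous paragraph yields
\[
\sum_{i}\bigl(a_i[x_i'']+b_i[x_i']\bigr)=-\sum_{i}\bigl(a_i[x_i']+b_i[x_i'']\bigr),
\]
so $b_i=-a_i$ for every $i$, and therefore $\partial[\Sigma']=\sum_i a_i\bigl([x_i']-[x_i'']\bigr)$. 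Finally, the standard formula for the boundary of the fundamental class of an orientable surface with boundary (each boundary component appears with coefficient $\pm 1$ in the induced boundary orientation) forces $a_i\in\{\pm1\}$; swapping the labels $x_i'\leftrightarrow x_i''$ in each pair where $a_i=-1$ turns every $a_i$ into $+1$ and yields the stated formula.

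The only delicate point—the rest is bookkeeping—is verifying that the orientations chosen in part (1) make $\tau$ act as $[x_i']\leftrightarrow[x_i'']$ with coefficient $+1$ (and not $-1$). This is where the compatibility $p\circ\tau=p$ is essential, and it is crucial that the orientations on $x_i'$ and $x_i''$ are dictated by the covering $p$ rather than by the induced boundary orientation from $\Sigma'$; otherwise the signs in the final formula could come out wrong, and the ``up to swapping'' clause in the statement would have to be strengthened.
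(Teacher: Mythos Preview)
Your proof is correct, but it follows a different route from the paper's. The paper argues locally: for each $x_i$ it picks an orientation-reversing loop $\sigma$ in $\Sigma$ based at a point of $x_i$, transports an explicit frame $(v_1,v_2)$ of $T_*\Sigma$ along $\sigma$ to $(v_1,-v_2)$, lifts $\sigma$ to a path in $\Sigma'$ joining $x_i'$ to $x_i''$, and reads off directly that the boundary orientation induced by $[\Sigma']$ at $x_i''$ is opposite to the one pulled back from $x_i$. Your argument is instead a global symmetry argument: you exploit that the deck involution $\tau$ is orientation-reversing, hence $\tau_*\partial[\Sigma']=-\partial[\Sigma']$, while on $H_1(\partial\Sigma')$ it swaps $[x_i']\leftrightarrow[x_i'']$ with sign $+1$ (because both are oriented via $p$); comparing forces the opposite signs. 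The paper's approach is more hands-on and requires no facts beyond frame transport, whereas yours is cleaner algebraically and treats all boundary components at once, at the cost of invoking the general principle $\tau_*[\Sigma']=-[\Sigma']$ for an orientation-reversing diffeomorphism. Both are perfectly fine for this lemma; your observation that the orientations on $x_i',x_i''$ must be dictated by $p$ rather than by $\partial[\Sigma']$ is exactly the point that makes the sign bookkeeping work, and it matches what the paper is computing in its tangent-space argument.
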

\begin{proof}
    For $i=1,\dots,b$, consider a small open neighbourhood of $x_i$ in $\Sigma$, and observe that such a neighbourhood is orientable. It follows that $x_i$ lifts with degree $1$ to the orientable double $\Sigma'$, and that $\Sigma'$ has two boundary components $x_i',x_i''$ each covering $x_i$ with degree $1$. \par \smallskip

    Choose a path $\sigma$ in $\Sigma$, starting and ending at a point $*\in x_i$, such that following $\sigma$ reverses the orientation. Take an ordered basis of the tangent space $T_* \Sigma$ at $*$, consisting of a vector $v_1$ pointing ``inside the surface'' and a vector $v_2$ tangent to $x_i$, and pointing in accordance with the orientation of $x_i$. We transport this basis along $\sigma$; since following $\sigma$ reverses the orientation, we obtain another ordered basis of $T_* \Sigma$: $(v_1, -v_2)$. \par \smallskip
    
    Lifting $\sigma$ to $\Sigma'$, we obtain a path $\sigma'$ connecting $*' \in x_i'$ and $*'' \in x_i''$. Fix an orientation on $\Sigma'$ given by the ordered basis of $T_{*'} \Sigma'$, consisting of a vector $v'_1$ pointing ``inside the surface'' and a vector $v_2$ in the direction of $x_i'$, and in accordance with the orientation on $x_i'$. This orientation coincides with the orientation given by the ordered basis of the tangent space at $*''$, consisting of a vector pointing ``inside the surface'' and a vector going along $x_i''$, but in direction opposite to the orientation of $x_i''$. Hence the boundary of the fundamental class of $\Sigma'$ will contain $x_i'$ and $x_i''$ with opposite signs. The conclusion follows.
\end{proof}

\subsection{Pair splittings and JSJ decompositions}\label{sec:JSJ-decomposition}

Recall that a group $G$ splits \emph{relative} to a subgroup $H$ if there is a graph of groups $\mathcal{G}$ with $\pi_1(\mathcal{G})=G$ and such that $H$ is conjugate into a vertex group of $\mathcal{G}$. We say that the pair $(F,[\underline{w}])$ \emph{splits} if $F$ splits as a graph of groups $\mathcal{F}$, and for every $[w_i]\in [\underline{w}]$ there exists $u_i\in [w_i]$ that is contained in a vertex group of $\mathcal{F}$. When the edge groups of $\mathcal{F}$ take a specific form, e.g. when they belong to a class of groups $\mathcal{C}$, we will say that $(F,[\underline{w}])$ splits \emph{over $\mathcal{C}$} and call this splitting a $\mathcal{C}$-splitting. A splitting of $(F,[\underline{w}])$ is \emph{trivial} if the entire group $F$ is conjugate into a vertex group. 

\begin{defn}[One-ended pairs]
    A pair $(F,[\underline{w}])$ is called \emph{one-ended} if it does not admit a non-trivial free splitting (that is, a splitting where all edge groups are trivial).
\end{defn}

\begin{defn}[Rigid pairs]
    A pair $(F,[\underline{w}])$ is called \emph{rigid} if it has no non-trivial $\mathbb{Z}$-splitting.
\end{defn}

Unlike rigid pairs, which have no non-trivial $\mathbb{Z}$-splittings, \emph{flexible pairs} arise naturally from surfaces with boundary and admit many splittings:

\begin{defn}[Flexible pairs] \label{def:flex}
    A pair $(F,[\underline{w}])$ is called \emph{flexible} if the following conditions hold:
    \begin{enumerate}
        \item There is a compact, connected (possibly non-orientable) surface $\Sigma$ with a non-empty boundary and an isomorphism $F \cong \pi_1(\Sigma)$. We require that $\Sigma$ is not a disk, an annulus, a pair of pants, or a M\"obius band. 
        \item For every $[w_i]\in [\underline{w}]$ there exists a boundary component $[\sigma] \in \partial \Sigma$ such that $[w_i]=[\sigma^d]$ for some $d\in \mathbb{Z}$.
        \item For every $[\sigma]\in \partial \Sigma$ there exists $[w_i]\in [\underline{w}]$ such that $[w_i]=[\sigma^d]$ for some $d\in \mathbb{Z}$. 
    \end{enumerate}
\end{defn}

\begin{rmk}
    If a pair $(F,[\underline{w}])$ is one-ended (respectively, flexible or rigid), we will say that the peripheral structure $[\underline{w}]$ on $F$ is one-ended (respectively, flexible or rigid).
\end{rmk}

The pair of pants stands out in the list of excluded surfaces in the first condition of \Cref{def:flex} above: it is the only one whose fundamental group is a non-abelian free group. It is easy to see that if $\Sigma$ is a pair of pants, then the pair $(\Sigma, \partial \Sigma)$ is rigid. Moreover, let $(F,[\underline{w}])\le (\Sigma,\partial \Sigma)$ be such that $F=\pi_1(\Sigma)$ and $\pcl{\partial \Sigma}{[\underline{w}]}=\partial \Sigma$; then just like $(\Sigma, \partial \Sigma)$, the pair $(F,[\underline{w}])$ does not split over $\mathbb{Z}$. However, by \Cref{lem:pos-gen}, $\Sigma$ admits a finite-sheeted cover $\Sigma'$ of positive genus. This implies that both $(\Sigma', \partial \Sigma')$ and the finite-index pull-back pair $(F'=\pi_1(\Sigma'),[\underline{w}'])\le(F,[\underline{w}])$ are flexible. In fact, a quick check reveals that every finite cover $\Sigma'$ of $\Sigma$ yields a flexible pair $(\Sigma', \partial \Sigma')$ (and the same holds for the corresponding $(F'=\pi_1(\Sigma'),[\underline{w}'])$). \par \smallskip

Aside from the examples described in the previous paragraph, the properties of being one-ended, rigid or flexible are stable under taking finite-index pull-backs; for further detail we refer the reader to Cashen's work \cite[Section 4]{Cas16}. For an in-depth treatment of rigid and flexible peripheral structures, we refer the reader to Guirardel's and Levitt's book \cite{GL17}. \par \smallskip

Our next goal is to describe \emph{JSJ decompositions} of a hyperbolic graph of free groups with cyclic edge groups $\mathcal{G}$. Let $G=\pi_1(\mathcal{G})$, and note that, by Shenitzer's lemma (see, for example, \cite[Theorem 18]{wil:one-ended}, and also \cite{Shenitzer1955}, \cite{Swarup1986} and \cite{DF05}), $G$ admits a non-trivial free splitting if and only if there exists a vertex group $G_v$ of $\mathcal{G}$ such that the induced pair $(G_v,[\underline{w}])$ at $G_v$ is not one-ended. This, in turn, implies that each factor in the Grushko decomposition of $G$ is a hyperbolic graphs of free groups with cyclic edge groups. \par \smallskip

Suppose now that $G$ is one-ended. Roughly speaking, a \emph{cyclic JSJ decomposition} of $G$ serves as a ``dictionary'' that records all cyclic splittings of $G$ (up to conjugation). Before stating this formally, we briefly explain the meaning of the previous sentence. A (cyclic) JSJ decomposition of $G$ is a graph of groups $\mathcal{G}'$ with cyclic edge groups such that
 $\pi_1(\mathcal{G}')=G$. In addition, $\mathcal{G}'$ induces either a rigid or a flexible peripheral structure on each of its vertex groups $G'_v$. The cyclic splittings of $G$ are encoded in $\mathcal{G}'$ in the following way: if $\mathcal{S}$ is a $\mathbb{Z}$-splitting of $G$ with a single edge, then (up to conjugation) either 
\begin{enumerate}
    \item the edge group $S_e$ of $\mathcal{S}$ is an edge group of $\mathcal{G}'$ (and $\mathcal{S}$ can be obtained from $\mathcal{G}'$ by regarding subgraphs of $\mathcal{G}'$ as single vertices), or
    \item the edge group $S_e$ is contained in a flexible vertex $G'_v$ of $\mathcal{G}'$; then, one can refine the splitting $\mathcal{G}'$ to obtain a graph of groups $\mathcal{G}''$ in which $S_e$ appears as an edge group (and as in the previous case, collapse subgraphs into single vertices to obtain $\mathcal{S}$).
\end{enumerate}

We finish the section by stating the precise version of the JSJ decomposition theorem that will be used in \Cref{subsec:normal}, where we put $G$ into a normal form that is convenient to work with:

\begin{thm}[follows from {\cite[Theorem 1]{GL17}}] \label{prop:JSJ}
    Let $G$ be the fundamental group of a graph of free groups with cyclic edge groups. Then $G = \bigast_{i=1}^n G_i\ast F$, where $F$ is a finitely generated free group and each $G_i$ is a one-ended. Furthermore, each $G_i$ splits as a graph of free groups with cyclic edges, which induces either a rigid or a flexible peripheral structure on each of its vertex groups.
\end{thm}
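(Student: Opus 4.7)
The plan is to establish the two assertions separately: first, the Grushko-style splitting into one-ended factors and a free group; second, the existence of a JSJ-type decomposition on each one-ended factor, with the additional verification that vertex groups remain free.

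For the first assertion, I would start with the given graph-of-groups $\mathcal{G}$ realizing $G$, and iteratively apply Shenitzer's lemma (cited in the text as \cite[Theorem 18]{wil:one-ended}). Whenever $G$ (or a factor thereof) admits a non-trivial free splitting, Shenitzer's lemma provides a vertex group $G_v$ of $\mathcal{G}$ whose \emph{induced pair} $(G_v,[\underline{w}])$ is not one-ended. Any non-trivial free splitting of $(G_v,[\underline{w}])$ relative to its peripheral structure can then be used to refine $\mathcal{G}$, blowing up $v$ and keeping all edge groups cyclic. Continuing this process (which terminates by a standard accessibility/complexity argument) yields a refinement of $\mathcal{G}$ in which certain edges carry the trivial group; collapsing the dual decomposition identifies the Grushko factorization $G=G_1\ast\cdots\ast G_n\ast F$ with $F$ finitely generated free. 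Each $G_i$ is one-ended by construction, and inherits from the refined $\mathcal{G}$ a graph-of-groups structure with free vertex groups and cyclic edges, by passing to the appropriate subgraph and discarding trivial-edge components.

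For the second assertion, I would invoke the cyclic JSJ decomposition theorem of Guirardel--Levitt \cite[Theorem 1]{GL17} applied to each one-ended factor $G_i$. Since $G_i$ is finitely presented (being hyperbolic), the theorem produces a graph of groups $\mathcal{G}_i$ with cyclic (one-ended-) edge groups such that $\pi_1(\mathcal{G}_i)=G_i$, and each vertex group is either \emph{rigid} or \emph{flexible} with respect to its induced peripheral structure in precisely the sense of \Cref{def:flex} (flexible vertices being surfaces with boundary, once we are in the hyperbolic/cyclic setting where there are no Seifert pieces with empty boundary of higher complexity, as all splittings are over $\mathbb{Z}$).

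The main obstacle is verifying that the vertex groups of the JSJ are free, so that the decomposition $\mathcal{G}_i$ really is a graph of free groups with cyclic edges. Flexible vertices are handled immediately: by \Cref{def:flex}, they are fundamental groups of compact surfaces with non-empty boundary, hence free. For rigid vertices, I would argue via the defining universal property of the cyclic JSJ: any rigid vertex group $V$ is elliptic in \emph{every} one-edge cyclic splitting of $G_i$. In particular, $V$ is elliptic in the (one-edge splittings obtained from the) original cyclic graph-of-free-groups decomposition of $G_i$ produced in the first part of the proof. Ellipticity in every edge of that decomposition forces (a conjugate of) $V$ to lie inside a single vertex group, which is free; hence $V$ is itself free. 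Combining the two parts yields the claimed decomposition.
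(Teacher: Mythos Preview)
Your proof is correct and follows essentially the same route as the paper's remark: Shenitzer's lemma handles the Grushko part, and for each one-ended factor you apply the Guirardel--Levitt JSJ and then observe that rigid (universally elliptic) vertex groups are elliptic in the original graph-of-free-groups splitting, hence conjugate into a free vertex group, while flexible vertex groups are surfaces with boundary. The only cosmetic differences are that the paper cites \cite[Proposition 2.2]{GL17} for the domination step and \cite[Theorem 6.2]{GL17} for the flexible description, whereas you argue these directly; also, you invoke hyperbolicity to get finite presentation of $G_i$, but the statement does not assume hyperbolicity and does not need it (a finite graph of finitely generated free groups with cyclic edges is automatically finitely presented).
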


\begin{rmk}
    To see how \Cref{prop:JSJ} follows from Guirardel's and Levitt's \cite[Theorem 1]{GL17}, let $G_i$ be a factor of the Grushko decomposition of $G$ and let $\mathcal{G}_i$ be its splitting as a graph of free groups with cyclic edge groups. Let $\mathcal{G}'_i$ be a JSJ decomposition of $G_i$ obtained by \cite[Theorem 1]{GL17}; note that the edge groups of $\mathcal{G}'_i$ are all cyclic and that $\pi_1(\mathcal{G}'_i)=G_i$. By \cite[Proposition 2.2]{GL17}, there is a refinement of $\mathcal{G}'_i$ dominating $\mathcal{G}_i$. Thus, every universally elliptic vertex group of $\mathcal{G}'_i$ must be contained in some vertex group of $\mathcal{G}_i$. In particular, it must be a free group, on which $\mathcal{G}'_i$ induces a rigid peripheral structure. Moreover, by \cite[Theorem 6.2]{GL17}, non-universally elliptic vertex groups of $\mathcal{G}'_i$ must also be free groups, and the pairs induced by $\mathcal{G}'_i$ at the non-universally elliptic vertices are all flexible.
\end{rmk}

\subsection{Clean covers and free splittings}

We begin by recalling the notion of \emph{cleanliness}, originally introduced by Wise \cite[Definition 4.5]{wise:graph-sep}: a graph of spaces $\mathcal{X}$ is called \emph{clean} if for every edge space $X_e$, the attaching map $i_e:X_e \rightarrow X_{\tau(e)}$ is an embedding. Since cleanliness is a local phenomenon, describing a relation between a vertex space and an adjacent edge space, it makes sense to refer to cleanliness at the level of pairs induced on the vertex groups of $\mathcal{X}$ (see \cite[Definition 27]{wil:one-ended}). Since we do not use cleanliness directly, but rather its consequences, we forego recalling the relevant definitions and refer the reader to \cite{wise:graph-sep} and \cite{wil:one-ended}. \par \smallskip

The consequences of cleanliness, at the level of pairs, are particularly valuable when applied to a rigid pair: Wilton showed \cite[Theorem 8]{wil:one-ended} that rigid pairs admit finite-index pull-back subpairs that satisfy the following strong version of one-endedness (which will be used when we create artificial branching in \Cref{sec:main}):

\begin{defn}[Strongly one-ended pair]\label{def:strongly-one-ended-pair}
    A pair $(F,[\underline{w}])$ is called \emph{strongly one-ended} if for every $[w_i]\in[\underline{w}]$ the pair $(F,[\underline{w}]\setminus\{[w_i]\})$ is one-ended.
\end{defn}

We now recall Wilton's theorem:

\begin{thm}[{\cite[Theorem 8]{wil:one-ended}}]\label{thm:rigid-strongly-one-ended}
    Let $(F,[\underline{w}])$ be a rigid pair which is not a pair of pants. Then there exists a finite-index subgroup $F'\le F$ such that the finite-index pull-back pair $(F',[\underline{w}'])\le(F,[\underline{w}])$ is strongly one-ended.
\end{thm}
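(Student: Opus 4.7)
The plan is to combine Wise's cleanliness theorem (in the version adapted to pairs in \cite[Section 2]{wil:one-ended}) with a Whitehead-graph analysis of free splittings. The strategy in one sentence: reduce to a clean finite-index pull-back where peripheral loops are embedded, and then pass to a further pull-back in which each peripheral class has so many elevations that removing any single one cannot destroy the combinatorial structure forced by rigidity.

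First, I would invoke cleanliness for pairs to pass to a finite-index pull-back $(F_1,[\underline{w}_1])\lepair (F,[\underline{w}])$ that is clean. Rigidity is preserved under finite-index pull-back (cf.\ the discussion following \Cref{def:flex} and \cite[Section 4]{Cas16}), so $(F_1,[\underline{w}_1])$ remains rigid and, in particular, one-ended. Using cleanliness, I would realize $F_1$ as the fundamental group of a graph $X_1$ in which every conjugacy class in $[\underline{w}_1]$ is carried by an embedded reduced loop, and form the associated Whitehead-type graph $W([\underline{w}_1])$ of the peripheral structure with respect to this graph presentation.

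Second, I would reformulate strong one-endedness combinatorially for every finite-index pull-back $(F',[\underline{w}'])\lepair (F_1,[\underline{w}_1])$. By a Stallings-fold/Whitehead argument, a non-trivial free splitting of the pair $(F',[\underline{w}']\setminus\{[u]\})$ corresponds to a cut vertex of the Whitehead graph $W([\underline{w}']\setminus\{[u]\})$ obtained by removing the edges contributed by $[u]$. Rigidity of $(F_1,[\underline{w}_1])$ forces the full graph $W([\underline{w}_1])$ to have no cut vertices (a cut vertex would supply a non-trivial free, hence $\mathbb{Z}$-, splitting relative to $[\underline{w}_1]$), and this persists for the pull-back Whitehead graph of any clean $(F',[\underline{w}'])$ by \Cref{lem:fi-embed-2}. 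Hence any cut vertex in $W([\underline{w}']\setminus\{[u]\})$ must be created solely by the removal of the edges contributed by $[u]$.

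Third, I would pass to a sufficiently deep finite-index pull-back $(F',[\underline{w}'])$ so that each $[w_j]\in[\underline{w}_1]$ has many elevations in $[\underline{w}']$, and so that at every vertex of $W([\underline{w}'])$ the local contribution of each $[w_j]$ is replicated by multiple parallel edges coming from distinct elevations. The removal of any single elevation then leaves enough parallel edges to preserve the absence of a cut vertex, contradicting the combinatorial reformulation of the previous step and showing that $(F',[\underline{w}'])$ is strongly one-ended.

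The main obstacle will be the third step: constructing $F'$ so that the required redundancy is supplied uniformly across all peripheral classes and at every vertex of $W([\underline{w}'])$. This requires careful control of how elevations of distinct peripheral classes intersect local neighbourhoods in $X'$, and a verification that elevations of different classes do not conspire to produce new cut vertices in the cover. Cleanliness, which forces every peripheral loop to be embedded and disjoint from the identifications that encode the graph-of-spaces structure, is what makes this combinatorial bookkeeping feasible, as it reduces the analysis to a finite local check at each vertex of the presentation graph.
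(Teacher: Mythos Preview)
The paper does not prove this statement; it is quoted from \cite[Theorem~8]{wil:one-ended} and used as a black box, so there is no in-paper argument to compare your attempt against.

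That said, your step~3 contains a genuine gap. Once the pair is clean, every peripheral loop $[w_j]$ is \emph{embedded} in the presentation graph $X_1$, so it passes through each vertex of $X_1$ at most once and contributes at most one edge to the local Whitehead graph there. In any finite cover $X'\to X_1$ the link of a vertex $v'$ maps isomorphically to the link of its image $v$, and precisely one elevation of $[w_j]$ passes through $v'$, making exactly the turn that $[w_j]$ makes at $v$. Thus the local Whitehead graph at $v'$ is canonically identified with that at $v$, and the single edge it receives from the $[w_j]$-family comes from one elevation only. Distinct elevations of $[w_j]$ pass through \emph{disjoint} sets of vertices of $X'$; they never supply parallel edges at a common vertex. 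Removing an elevation $[u]$ therefore deletes, at every vertex along $[u]$, exactly the same local edge that removing $[w_j]$ would delete downstairs, and passing to ever deeper covers does nothing to the local link structure. The redundancy you rely on simply does not materialise.

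Wilton's actual proof in \cite{wil:one-ended} is organised differently: the finite-index step is used solely to achieve cleanliness, and rigidity is then exploited directly---a hypothetical free splitting of $(F_1,[\underline{w}_1]\setminus\{[w_j]\})$ is shown to produce a forbidden relative cyclic splitting of the full pair $(F_1,[\underline{w}_1])$---rather than via any counting or redundancy argument in further covers.
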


\begin{rmk}
    Since being one-ended is preserved under taking finite-index pull-back pairs, the same holds for being strongly one-ended. Moreover, if $(F,[\underline{w}])$ is strongly one-ended, then for every $[w_i]\in [\underline{w}]$ and for every finite-index pull-back pair $(F',[\underline{w}'])\le (F,[\underline{w}])$, the pair
    \[
    (F',[\underline{w}'] \; \setminus \; \{[w'_j]\in [\underline{w}']\;\vert \; [w'_j] \text{ is an elevation of } [w_i]\})
    \]
    is one-ended.
\end{rmk}

We conclude this discussion with the following simple observation, which follows directly from \cite[Definition 27 and Theorem 8]{wil:one-ended}. Let $(F,[\underline{w}])$ be a rigid pair, and identify $F$ with the fundamental group of a rose graph $\Gamma$. Let  $(F',[\underline{w}'])\le(F,[\underline{w}])$ be a strongly one ended finite-index pull-back pair obtained by means of \Cref{thm:rigid-strongly-one-ended}, and let $\Gamma'$ be the finite-sheeted cover of $\Gamma$ that corresponds to $F'$. Recall \Cref{rmk:elevations} and \Cref{note:geom-rep}; Wilton constructs the pair $(F',[\underline{w}'])$ so that for every $[w'_i]\in [\underline{w}']$, the map $w'_i:S^1\longrightarrow \Gamma'$ is an embedding. Therefore, $w'_i$ is a primitive element of $F$. \par \smallskip 
We stress that this only implies that \emph{each} of the elements in $[\underline{w}']_{F'}$ is primitive; there is no guarantee that two or more of them can be realized as part of a common basis. However, once a single primitive element appears in a finite-index pull-back peripheral structure, it is straightforward to promote it to a larger subset of a basis in a further cover:

\begin{lem}\label{lem:part-of-a-basis}
    Let $(F,[\underline{w}])$ be a pair and suppose that $[w_1]\in [\underline{w}]$ is primitive. Let $(F',[\underline{w}'])\lepair (F,[\underline{w}])$ be a finite-index pull-back pair, and let $[w_{11}']_{F'},[w_{12}']_{F'},\dots,[w_{1k}']_{F'}\in[\underline{w}']$ be the conjugacy classes which are elevations of $[w_1]_F$. Then the representatives $w_{11}',w_{12}',\dots,w_{1k}'$ can be chosen to form part of a basis of $F'$.
\end{lem}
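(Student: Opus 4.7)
The plan is to realize $F$ as the fundamental group of a graph in which $w_1$ is represented by an embedded edge, pass to the cover corresponding to $F'$, and carefully choose a spanning tree so that each elevation of $w_1$ becomes a basis element.

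Since $w_1$ is primitive in $F$, extend $w_1$ to a free basis of $F$ and let $\Gamma$ be the rose graph whose petals are labeled by this basis. Then $\pi_1(\Gamma)\cong F$ and the map $w_1:S^1\to\Gamma$ is an embedding onto one of the petals. Let $p:\Gamma'\to\Gamma$ be the covering corresponding to $F'\le F$, and work inside the core of $\Gamma'$ (which we still denote $\Gamma'$). Following \Cref{ex:geometric-elevation} and \Cref{rmk:elevations}, the finite-degree elevations of $[w_1]_F$ to $F'$ are in bijection with the connected components of the preimage of the petal $w_1(S^1)$ under $p$. Each such component is a finite connected cover of $S^1$, hence a cycle; since $w_1(S^1)$ is embedded in $\Gamma$, these cycles $C_1,\ldots,C_k$ are pairwise disjoint embedded subgraphs of $\Gamma'$, and $C_i$ has length $d_i:=\deg_{[w_1]_F}[w'_{1i}]_{F'}$.

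The key step is to produce a spanning tree $T$ of $\Gamma'$ such that for each $i$, exactly one edge $e_i$ of $C_i$ fails to lie in $T$ (so $T\cap C_i=C_i\setminus\{e_i\}$). To do so, contract each $C_i$ to a point to obtain a graph $\Gamma'/\!\!\sim$ and pick any spanning tree $T_0$ of $\Gamma'/\!\!\sim$. Lift $T_0$ back to a subgraph $T'_0$ of $\Gamma'$ using no $C_i$-edges, and for each $i$ choose one edge $e_i\in C_i$ and add the remaining $d_i-1$ edges $C_i\setminus\{e_i\}$ to $T'_0$. A direct edge count shows that the resulting subgraph $T$ has $|V(\Gamma')|-1$ edges, and it is connected because $T_0$ connected $\Gamma'/\!\!\sim$ and each $C_i\setminus\{e_i\}$ is a path joining all vertices of $C_i$; hence $T$ is a spanning tree of $\Gamma'$ with the required intersection property.

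Fix a basepoint $v_0\in T$. The standard basis of $F'=\pi_1(\Gamma',v_0)$ associated to $T$ consists, for each edge $f\notin T$, of the loop $\ell_f$ obtained by following the unique $T$-path from $v_0$ to the initial endpoint of $f$, crossing $f$, and returning along $T$ to $v_0$. For the distinguished edges $f=e_i$, the $T$-path between the two endpoints of $e_i$ is precisely $C_i\setminus\{e_i\}$, so $\ell_{e_i}$ is freely homotopic in $\Gamma'$ to the cycle $C_i$ traversed once, and therefore represents the conjugacy class $[w'_{1i}]_{F'}$. Setting $w'_{1i}:=\ell_{e_i}$ yields representatives that sit inside the basis $\{\ell_f : f\notin T\}$ of $F'$. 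The only mildly technical point is the spanning tree construction, but since the $C_i$ are pairwise disjoint embedded cycles, contract-and-lift works cleanly.
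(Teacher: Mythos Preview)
The paper does not actually prove this lemma; it is stated immediately after the remark that the promotion is ``straightforward'' and left without argument. Your proof is correct and supplies exactly the kind of elementary covering-space argument the authors presumably had in mind: realize $w_1$ as an embedded petal, observe that its preimage in the finite cover is a disjoint union of embedded cycles (one per elevation), and build a spanning tree missing precisely one edge from each cycle.

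Two minor comments. First, since $F'$ has finite index, the cover $\Gamma'$ is already finite and coincides with its core, so the aside about working in the core is harmless but unnecessary. Second, you should note that every vertex of $\Gamma'$ lies in exactly one $C_i$ (each vertex of the rose has a unique incoming and outgoing $w_1$-edge, so the preimage of the petal is a disjoint union of cycles covering all vertices); this makes the contract-and-lift step completely transparent and confirms your edge count $|V(\Gamma')|-1$. Finally, there is a tacit orientation choice: the basis element $\ell_{e_i}$ is freely homotopic to $C_i$ traversed in one direction, which may a priori represent $[w_{1i}'^{-1}]$ rather than $[w_{1i}']$; replacing $\ell_{e_i}$ by its inverse (still a basis element) fixes this.
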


\subsection{Normal form} \label{subsec:normal}

Recall the discussion following \Cref{def:peri}, where we said that some authors define a peripheral structure to be a collection of conjugacy classes of maximal cyclic subgroups. We opted to define peripheral structures as conjugacy classes of elements (rather than of maximal cyclic subgroups), as this formulation streamlines technically involved gluing arguments (and hopefully makes them easier to convey). This will be apparent in this section, where we explain how to manipulate a hyperbolic graph of free groups with cyclic edge groups to obtain a finite-index subgroup with a graph of groups splitting that admits a particularly convenient \emph{normal form}. The resulting normal form is a tree of cylinders of a JSJ decomposition (see \cite{GL11}), in which rigid vertex groups are strongly one-ended (and each conjugacy class in their peripheral structure corresponds to a primitive element), and flexible vertices correspond to orientable surfaces of strictly positive genus. \par \smallskip

Recall that a subgroup $H$ of a group $G$ is called \emph{malnormal} if whenever $gHg^{-1} \cap H \ne \{1\}$ for some $g\in G$, we have that $g\in H$. The following definition essentially describes pairs in which the conjugacy classes in the peripheral structure are ``pairwise malnormal''.

\begin{defn}\label{def:malnormal-pair}
    A pair $(F,[\underline{w}])$ is called \emph{malnormal} if for every $[w_i],[w_j]\in[\underline{w}]$ and for every two representatives $w_i\in [w_i]$ and $w_j\in[w_j]$, if $\gen{w_i}\cap g\gen{w_j}g^{-1}\ne\{1\}$, then $w_i=w_j$ and $g\in\gen{w_i}$.
\end{defn}

It is clear from the definition that every subpair of a malnormal pair is itself malnormal.

\begin{rmk}
    \Cref{def:malnormal-pair} is equivalent to the following: every $[w_i]$ is the conjugacy class of a maximal cyclic subgroup, and for every $i\ne j$ the conjugacy classes $[w_i]$ and $[w_j]$ are not inverses of one another. Another equivalent condition, is that for some (or, equivalently, any) choice of representatives $g_i \in [w_i]$, the tuple $(g_1,\ldots,g_n)$ is \emph{independent} (see \cite[Definition 3.1]{wise:graph-sep}) and each $g_i$ generates a maximal cyclic subgroup of $F$.
\end{rmk}

Another reason for our choice to define peripheral structures as conjugacy classes of elements, and not of maximal cyclic subgroups, is that keeping track of a given orientation on the boundary of a surface is crucial for our arguments in the following sections. Following the discussion on $\partial$-equations in the \nameref{intro}, we define:

\begin{defn}[Surface-type pairs]\label{def:surface-type-pair}
    A pair $(F,[\underline{w}])$ is of \emph{surface-type} if there exists a compact, connected and orientable surface with boundary $\Sigma$ of genus $\ge 1$, such that for some orientation on the boundary components of $\Sigma$, $(F,[\underline{w}]) \cong (\Sigma, \partial \Sigma)$.
\end{defn}

Note that every pair of surface-type is both flexible and malnormal; in addition, every finite-index pull-back pair of a surface-type pair is of surface-type.\par \smallskip

A key component of \Cref{subsec:surfaces} was \Cref{lem:covers-of-surfaces}, which grants one optimal flexibility in taking covers of surface-type pairs. Wise introduced a weaker form of this flexibility, \emph{omnipotence}, which holds for malnormal pairs.

\begin{defn}[{\cite[Definition 3.2]{wise:graph-sep}}]
    A group $G$ is called \emph{omnipotent} if, for every tuple of non-trivial independent elements $g_1,\dots,g_r\in G$ (that is, no two of them have conjugate non-trivial powers), there is a constant 
    \[d=d(g_1,\dots,g_r),\] 
    such that for every $r$-tuple of natural numbers $(n_1,\dots,n_r)$, there is a finite quotient $q:G\rightarrow Q$ in which the order of $q(g_i)$ is exactly $d\cdot n_i$ for $i=1,\dots,r$.
\end{defn}

\begin{thm}[{\cite[Theorem 3.5]{wise:graph-sep}}]
    \label{thm:omnipotence}
    Free groups are omnipotent.
\end{thm}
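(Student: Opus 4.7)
My plan is to prove omnipotence of a free group $F$ via a two-step covering-space construction. First, realize $F$ as $\pi_1(X)$ where $X$ is a finite rose graph, and represent each $g_i$ by an immersed cyclically reduced loop $\alpha_i \colon S^1 \to X$. By a standard reduction (replacing $g_i$ by its root and adjusting the prescribed order accordingly) we may assume that each $g_i$ generates a maximal cyclic subgroup. The crux of the argument is to construct a finite regular cover $p \colon \hat{X} \to X$ such that every elevation of every $\alpha_i$ to $\hat{X}$ is an embedded simple cycle, and such that the entire collection of these elevations forms a pairwise disjoint family of simple cycles in $\hat{X}$. This uses the independence hypothesis in an essential way: combining Marshall Hall's theorem (to embed the lift of each individual $\alpha_i$ in a finite cover) with a Stallings-folding argument applied to the disjoint union of the $\alpha_i$'s produces such a cover, because independence is precisely the condition preventing distinct $\alpha_i$'s from being identified during folding. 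Let $d_i$ denote the common elevation degree of $g_i$ to $\hat{X}$, i.e.\ the order of $g_i$ in the deck group $D := F/\pi_1(\hat{X})$, and set $d := \mathrm{lcm}(d_1,\ldots,d_r)$. This $d$ is the constant declared in the definition of omnipotence.

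Given any tuple $(n_1,\ldots,n_r)$, set $m_i := (d/d_i)\,n_i$ and pick a representative lift $\gamma_i$ of $\alpha_i$ in $\hat{X}$. Because the $D$-orbits of the $\gamma_i$'s consist of pairwise disjoint simple cycles in the finite graph $\hat{X}$, one can choose a spanning tree of $\hat{X}$ containing all edges except one edge from each such cycle; this exhibits the homology classes of all these cycles as part of a $\mathbb{Z}$-basis of $H_1(\hat{X};\mathbb{Z})$. I would then build a $D$-equivariant surjective homomorphism $\phi \colon \pi_1(\hat{X}) \twoheadrightarrow A$ onto a finite abelian group $A$ that sends each $D$-translate of $[\gamma_i]$ to an element of order exactly $m_i$. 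Concretely, one can take
\[
A := \bigoplus_{i=1}^{r} \; \bigoplus_{\sigma \in D/\mathrm{Stab}(\gamma_i)} \mathbb{Z}/m_i\mathbb{Z},
\]
with $D$ acting by permutation of the $\sigma$-coordinates within each block, and send the basis class $[\sigma\gamma_i]$ to the generator of the corresponding summand (and all other basis elements to $0$). Let $\hat{Y} \to \hat{X}$ be the cover associated to $\ker\phi$. By $D$-equivariance, $\pi_1(\hat{Y})$ is normal in $F$, so $Q := F/\pi_1(\hat{Y})$ is a finite group fitting in the extension $1 \to A \to Q \to D \to 1$, and the order of $g_i$ in $Q$ is $d_i \cdot m_i = d \cdot n_i$, exactly as required.

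The main obstacle is the first step: constructing a regular cover $\hat{X}$ in which the lifts of all $\alpha_i$'s are simultaneously embedded and pairwise disjoint. This is where the independence hypothesis is consumed, and it requires delicate Stallings-folding / Marshall Hall machinery (of the same flavour used elsewhere in Wise's \cite{wise:graph-sep} to prove subgroup separability). Once $\hat{X}$ is in hand, the remainder of the argument is essentially linear algebra over $\mathbb{Z}$, but one must be careful to symmetrize the homomorphism $\phi$ by the deck action so that $\ker\phi$ is normal in $F$, ensuring that $g_i$ attains the prescribed order $d \cdot n_i$ exactly, rather than some uncontrolled multiple.
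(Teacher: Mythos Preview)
The paper does not give its own proof of this statement: \Cref{thm:omnipotence} is quoted verbatim from Wise \cite[Theorem 3.5]{wise:graph-sep} and used as a black box in the normalization argument of \Cref{prop:normalization}. There is therefore nothing in the paper to compare your proposal against.

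That said, your sketch is essentially Wise's original argument. The two-step structure --- first pass to a finite regular cover $\hat X$ in which the independent loops lift to a family of pairwise disjoint embedded circles (this is Wise's Lemma~3.4, and is exactly where independence and the Marshall Hall / Stallings machinery are spent), then exploit that disjoint embedded circles in a graph represent part of a $\mathbb{Z}$-basis of $H_1$ to build a $D$-equivariant abelian quotient realizing the prescribed orders --- matches his proof closely. Your care in symmetrizing $\phi$ by the deck action is the correct point: without $D$-equivariance $\ker\phi$ need not be normal in $F$, and one would only control the order of $g_i^{d_i}$ in $A$ rather than the order of $g_i$ in $Q$. One small wrinkle worth making explicit when you write this up: $g_i^{d_i}$ is only \emph{$F$-conjugate} to your chosen lift $\gamma_i$, not $\pi_1(\hat X)$-conjugate, so you genuinely need that the $D$-action permutes the generators of the $\mathbb{Z}/m_i\mathbb{Z}$ summands (which your construction arranges) to conclude that $\phi(g_i^{d_i})$ still has order exactly $m_i$.
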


\begin{rmk}
    Note that the independence requirement in the definition of omnipotence is essential: suppose that $(F,[\underline{w}])$ is a pair such that there are two distinct conjugacy classes $[w_1],[w_2]\in[\underline{w}]$ with common powers $[w_1^a]=[w_2^b]$. Let $H\le F$ be a subgroup and let $[u]_H$ be a conjugacy class. If $a,b>0$, then it could be that $[u]_H$ is both a degree-$d_1$ elevation of $[w_1]_F$ and a degree-$d_2$ elevation of $[w_2]_F$. In this case omnipotence clearly fails. A similar failure occurs when, for example, $a>0>b$. This is one of the reasons for introducing the notion of malnormal pairs (\Cref{def:malnormal-pair}).
\end{rmk}

We next describe how to put a one-ended hyperbolic graph of free groups with cyclic edges $G$ into a normal form; both the statement and the proof of \Cref{prop:normalization} are long and tedious. However, converting $G$ into a normal form will put us in an optimal position when we depart towards Sections \ref{sec:branched} and \ref{sec:main}.

\begin{prop}[Normalization]\label{prop:normalization}
Let $H$ be a one-ended hyperbolic group that splits as a graph of free groups with cyclic edges. Then there is a finite index subgroup $G\le H$, which is the fundamental group of a graph of free groups with cyclic edge groups $\mathcal{G}$, satisfying the following properties:
    \begin{enumerate}
        \item \label{itm:1} The vertices of the underlying graph $\Gamma$ of $\mathcal{G}$ are either \emph{cyclic} (with vertex group isomorphic to $\mathbb{Z}$) or \emph{non-cyclic}. 
        \item\label{itm:2} The pairs induced by $\mathcal{G}$ at the non-cyclic vertices are either rigid or flexible; none of them is isomorphic to a pair of pants.
        \item\label{itm:3} The underlying graph $\Gamma$ of $G$ is bipartite; each edge connects a cyclic vertex to a non-cyclic (and hence rigid or flexible) vertex. Moreover, for every $e\in \mathrm{E}(\Gamma)$, if $\tau(e)$ is cyclic then $\varphi_e:G_e \rightarrow G_{\tau(e)}$ is an isomorphism.
        \item For every non-cyclic vertex $v\in \mathrm{V}(\Gamma)$, and for every $[w_i]$ in the induced peripheral structure $[\underline{w}]$ at $G_v$, there is exactly one edge $e\in \mathrm{E}(\Gamma)$ such that $\varphi_e(G_e)\cap[w_i] \ne \emptyset$.
        \item\label{itm:rigid} For every $v\in \mathrm{V}(\Gamma)$ such that the induced pair $(G_v,[\underline{w}])$ at $G_v$ is rigid, $(G_v, [\underline{w}])$ is malnormal and strongly one-ended. In addition, every $[w_i]\in [\underline{w}]$ is a conjugacy class consisting of primitive elements.
        \item\label{itm:flexible} For every $v\in \mathrm{V}(\Gamma)$, if the induced pair $(G_v,[\underline{w}])$ at $G_v$ is flexible, then it is of surface-type.
        \item \label{itm:last} There are no valence-$1$ cyclic vertices; there are no valence-$2$ cyclic vertex adjacent to two flexible vertices.
    \end{enumerate}
\end{prop}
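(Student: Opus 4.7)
The plan is to successively pass to finite-index subgroups, each sharpening the graph of groups structure in one way while preserving the features secured in previous steps. I would first apply Theorem \ref{prop:JSJ} to $H$ (which is already one-ended) to obtain a JSJ decomposition $\mathcal{H}$ with cyclic edge groups in which each vertex pair is either rigid or flexible, and then replace $\mathcal{H}$ by its tree of cylinders in the sense of Guirardel--Levitt. The tree of cylinders groups together edges whose cyclic stabilizers are commensurable, introducing new ``cyclic vertices'' with infinite cyclic vertex group. This yields at once a bipartite underlying graph in which each cyclic vertex is adjacent only to non-cyclic (rigid or flexible) vertices and in which each edge map into a cyclic vertex is an isomorphism, giving (\ref{itm:1}), (\ref{itm:3}), and (\ref{itm:2}). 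Property (4) is built into the tree of cylinders: each peripheral conjugacy class at a non-cyclic vertex is the image of exactly one incident edge group, since commensurable edges are merged into a single cylinder.

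Next I would normalize the non-cyclic vertices. Any rigid vertex whose underlying pair is a pair of pants is first eliminated by passing to a finite cover of that vertex group in which the corresponding surface acquires positive genus (Lemma \ref{lem:pos-gen}); such a vertex becomes genuinely flexible, in fact of surface-type. A further finite cover (Lemma \ref{lem:covers-of-surfaces}, or the orientation double cover of Lemma \ref{lem:boundary-non-orient}) ensures all flexible vertices are orientable surfaces of positive genus, yielding (\ref{itm:flexible}). For the remaining (genuinely) rigid vertices, Theorem \ref{thm:rigid-strongly-one-ended} provides a finite-index pull-back in which each rigid pair is strongly one-ended, and Lemma \ref{lem:part-of-a-basis} (applied class by class) provides a further pull-back in which each peripheral element at a rigid vertex is primitive. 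Omnipotence of free groups (Theorem \ref{thm:omnipotence}) then delivers a further finite cover in which the resulting rigid pairs become malnormal in the sense of Definition \ref{def:malnormal-pair} (one arranges, using omnipotence, that the orders of the images of distinct peripheral generators in a finite quotient are coprime enough to preclude commensurability, after which one lifts to the corresponding congruence cover). Together these steps establish (\ref{itm:rigid}). All pull-back operations are carried out simultaneously on the entire graph of groups by using the compatibility of the tree of cylinders with passage to finite-index subgroups, so the global bipartite normal form survives each refinement.

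Finally, property (\ref{itm:last}) is obtained by a purely combinatorial cleanup. A valence-$1$ cyclic vertex in the underlying graph would give a visible free splitting of $G$, contradicting one-endedness (inherited from $H$, which is one-ended, at every finite-index subgroup). A valence-$2$ cyclic vertex both of whose neighbours are flexible can be absorbed by amalgamating the two incident surfaces along the shared boundary cylinder: gluing two compact orientable surfaces of positive genus along a single boundary component yields another compact orientable surface of positive genus, whose induced pair is again flexible and of surface-type, so (\ref{itm:flexible}) is preserved. The main technical obstacle throughout is \textbf{consistency across pull-backs}: one must verify that strong one-endedness, malnormality, primitivity of peripheral elements, surface-type, and the bipartite tree-of-cylinders structure are simultaneously preserved under each subsequent finite-index passage. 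Strong one-endedness and flexibility are stable under pull-back by Cashen's results, malnormality and surface-type are straightforwardly stable, and primitivity of peripheral elements at rigid vertices is preserved because Lemma \ref{lem:part-of-a-basis} already situates these elements inside a basis. This bookkeeping, rather than any single step, is where the bulk of the care in the proof lies.
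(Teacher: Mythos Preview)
Your overall architecture---JSJ, pass to the tree of cylinders for the bipartite structure, then normalize vertices by local finite-index pull-backs---matches the paper's, but several steps are misattributed or missing.

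First, malnormality at non-cyclic vertices is not produced by omnipotence; it is already a feature of the tree-of-cylinders (equivalently, of the expansion and slide moves the paper performs in Step~1). Indeed, omnipotence \emph{requires} the peripheral elements to be independent as a hypothesis, so it cannot be used to establish that independence. Likewise, primitivity at rigid vertices is not produced by Lemma~\ref{lem:part-of-a-basis}: that lemma has primitivity as a hypothesis, not a conclusion. Primitivity comes from Wilton's clean-cover construction (the observation following Theorem~\ref{thm:rigid-strongly-one-ended}): the strongly one-ended pull-back he builds has the property that each peripheral loop embeds, hence is primitive.

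The most serious gap is the assembly step. You take, at each non-cyclic vertex, a specific finite-index pull-back (to get strong one-endedness and primitivity at rigid vertices, or positive genus and orientability at flexible ones). But these local choices do not automatically fit together into a finite-index subgroup of $H$: the elevation degrees of peripheral classes on either side of each cyclic vertex must match. This is exactly what omnipotence is used for in the paper (Steps~4--7): at a rigid vertex, omnipotence lets you prescribe the degrees of all peripheral elevations simultaneously; at a flexible vertex, Lemma~\ref{lem:covers-of-surfaces} plays the same role. One then chooses a common degree $d$, adjusts the cyclic vertex groups accordingly, takes the right number of copies of each piece, and glues. Your sentence ``all pull-back operations are carried out simultaneously on the entire graph of groups'' hides precisely this nontrivial construction.

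Finally, a valence-$1$ cyclic vertex does not give a free splitting (the edge group is $\mathbb{Z}$, not trivial). The correct reason it cannot occur---and the reason a valence-$2$ cyclic vertex cannot be adjacent to two flexible vertices---is that either configuration contradicts $\mathcal{H}$ being a JSJ decomposition: in the first case the edge can be collapsed, and in the second the two surfaces can be merged, so $\mathcal{H}$ was not maximal. Your merging argument for the valence-$2$ case is fine, but invoke it as a reason the situation was already excluded, not as a further modification.
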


\begin{rmk}
Note that $\mathcal{G}$ in \Cref{prop:normalization} above is a cyclic JSJ decomposition of $G$. In addition, as explained in the paragraph preceding \Cref{lem:part-of-a-basis}, each conjugacy class belonging to the peripheral structure induced at a rigid vertex is primitive when considered alone; however, together, they will not form a part of a common basis.
\end{rmk}

\begin{proof}
Let $\mathcal{H}$ be a JSJ decomposition of $H$ as described in \Cref{sec:JSJ-decomposition}, that is, we can assume that $\mathcal{H}=(\Delta,\{H_v\},\{H_e\},\{\psi_e\})$ has free vertex groups and infinite cyclic edge groups. Up to subdividing each edge (replacing it with two edges joined to a cyclic vertex), we can assume that non-cyclic vertices are never adjacent in $\Delta$.
\begin{enumerate}[label=\textbf{Step \arabic*:}]
\item \textbf{expansions and slide moves.} \label{step:moves} Let $v\in V(\Delta)$ be a non-cyclic vertex, and consider the induced pair $(H_v,[\underline{w}])$ at $H_v$. If some $[w_i]\in [\underline{w}]$ is a proper power, let $u_i\in H_v$ be such that $\gen{u_i}$ is a maximal cyclic subgroup of $H_v$ and $w_i \in \gen{u_i}$. We next perform an \emph{expansion move}, adding a cyclic vertex with vertex group $\gen{u_i}$, attaching it to $H_v$ by identifying it with $\gen{u_i}\le H_v$, and attaching everything that was previously attached to $w_i$ to this new cyclic vertex. We can now assume that every conjugacy class in $[\underline{w}]$ corresponds to a maximal cyclic subgroup of $H_v$. \par \smallskip

Similarly, suppose that multiple edges are attached to the same conjugacy class $[w_i]\in[\underline{w}]$ (or, respectively, to two conjugacy classes $[w_i],[w_i^{-1}]\in[\underline{w}]$). In this case, we perform \emph{slide moves}: if $e_1,e_2\in \mathrm{E}(\Delta)$ are attached to $[w_i]$ (or $[w_i]$ and $[w_i^{-1}]$, we slide $e_2$ along $e_1$ so that $\tau(e_1)$ is now the cyclic vertex $\iota(e_2)$. We repeat this until exactly one edge is attached to the conjugacy class $[w_i]$ (or to one of the two conjugacy classes $[w],[w^{-1}]$). \par \smallskip

Doing so for every non-cyclic vertex group of $\mathcal{G}$, we may assume that $\mathcal{G}$ induces a malnormal pair at every non-cyclic vertex.

\item \textbf{Collapse moves and bipartiteness.} Suppose that two distinct cyclic vertices $v_1,v_2\in \mathrm{V}(\Delta)$ are joined by an edge $e$. If both the inclusions $\psi_e,\psi_{\overline{e}}$ are proper, then the centralizer of the generator of $H_e$ is not cyclic, contradicting the fact that $H$ is a torsion-free hyperbolic group. Thus, one of these two edge maps must be an isomorphism, and we can collapse the edge $e$. Therefore we can assume two different cyclic vertices are never joined by an edge. \par \smallskip

To establish bipartiteness, suppose for a contradiction that there is an edge $e$ such that $\tau(e)=\iota(e)$ is a cyclic vertex. This implies that $H$ contains a Baumslag-Solitar subgroup, which again contradicts hyperbolicity. This shows that $\Delta$ is bipartite.

\item \label{step:flex} \textbf{From flexible to surface-type.} Suppose now that the induced pair $(H_v,[\underline{w}])$ at $H_v$ is either flexible or isomorphic to a pair of pants (and hence rigid). By Lemmas \ref{lem:covers-of-surfaces} and \ref{lem:pos-gen}, we can replace $(H_v,[\underline{w}])$ by a finite-index pull-back pair $(H'_v,[\underline{w}'])$ which is isomorphic to $(\Sigma, \partial \Sigma)$ and such that $\Sigma$ is a surface of positive genus with an even number of boundary components; in particular, $(H'_v,[\underline{w}'])$ is of surface-type. \par \smallskip

In preparation for the next steps, we replace $(H'_v, [\underline{w}'])$ by a further finite-index pull-back pair $(G_v,[\underline{w}''])$, which depends on a collection of constants $\{n_i \;\vert \; [w_i] \in [\underline{w}']\}$ that will be determined later. Each conjugacy class $[w_i']$ in $[\underline{w}']$ is an elevation of a unique conjugacy class in $[\underline{w}]$, having a certain degree which we denote by $d_i'$; let $d_v$ be such that the least common multiple of these degrees divides $d_v$. For any chosen set of additional constants $\{n_i \;\vert \; [w'_i] \in [\underline{w}']\}$, using \Cref{lem:covers-of-surfaces}, we obtain a finite-index pull-back pair $(G_v,[\underline{w}''])$ such that the degree of every elevation of a conjugacy class $[w_i]$ (taken from the original peripheral structure $[\underline{w}]$ on $H_v$) to $G_v$ is $n_i\cdot d_v$.

\item \textbf{Rigid vertices and omnipotence.} \label{step:rigid} For this step, let $v\in \mathrm{V}(\Delta)$ be such that the induced pair $(H_v,[\underline{w}])$ at $H_v$ is rigid (and not isomorphic to a pair of pants). By \Cref{thm:rigid-strongly-one-ended} and the subsequent observation, there is a finite-index pull-back pair $(H_v',[\underline{w}'])$ which is strongly one-ended, and such that every conjugacy class in $[\underline{w}']$ corresponds to a primitive element. We now repeat the construction in Step $3$, replacing \Cref{lem:covers-of-surfaces} with omnipotence. Choose $d_v$ as in the previous step. Up to replacing $d_v$ with some $n\cdot d_v$, \Cref{thm:omnipotence}  implies the following: for any chosen constants $\{n_i \;\vert \; [w_i] \in [\underline{w}']\}$, there is a finite-index pull-back pair $(G_v,[\underline{w}''])\le (H_v',[\underline{w}'])$ such that every elevation of $[w_i]\in [\underline{w}]$ to $[\underline{w}'']$ is of degree $n_i \cdot d_v$.

\item \textbf{Arranging cyclic vertices.} \label{step:cyc} Let $d$ be such that $d_v \vert d$ for every non-cyclic vertex $v\in \mathrm{V}(\Delta)$. Let $v\in \mathrm{V}(\Delta)$ be a cyclic vertex. For every $e\in \mathrm{E}(\Delta)$ with $\tau(e)=v$, there exists $k_e\in \mathbb{Z}$ such that $\psi(H_e)=k_eH_v$. Let $k_v$ be the least common multiple of $d$ and all such $k_e$. We replace $H_v$ with its index-$k_v$ subgroup $G_v=k_vH_v$.

\item \textbf{Choosing the constants.} Let $v\in \mathrm{V}(\Delta)$ be a non-cyclic vertex. Recall that in steps $3$ and $4$, we constructed a finite-index pull-back pair $(H_v',[\underline{w}'])\le (H_v,[\underline{w}])$ satisfying additional properties. Having carried out Step $1$, there is a unique edge $e\in \mathrm{E}(\Delta)$ connecting $[w_i]$ to a cyclic vertex $H_{\tau(e)}$. We choose $n_i = k_{\tau(e)}/d_v$.

\item \textbf{Gluing the pieces.} To finish, we take $m_v$ copies of each $G_v$ such that $m_v:[H_v:G_v]$ is uniform across all $v\in \mathrm{V}(\Delta)$. Our choice of the constants $n_i$ and $m_v$ implies that we can match \emph{all} of the (hanging) elevations in the different vertex groups and obtain a finite (possibly disconnected) cover of $\mathcal{H}$. Choosing a connected component of this cover, we obtain a graph of groups $\mathcal{G}$ whose fundamental group $G$ is a finite index subgroup of $H$. Note that $\mathcal{G}$ satisfies items \ref{itm:1}-\ref{itm:flexible}. Moreover, a cyclic vertex can not have valence $1$, nor can it have valence $2$ and be adjacent to two flexible vertices (otherwise, the original graph of groups $\mathcal{H}$ would have not been a JSJ decomposition of $H$). The conclusion follows.
\end{enumerate}
\end{proof}

\begin{rmk}
    Compare the above normal form with the \emph{tree of cylinders} described in \cite{GL11}, and with the normal form used in Cashen's work \cite{Cas16} (see also \cite{CM11}).
\end{rmk}

\subsection{Surface-type subpairs}

In our context, pairs of surface-type are a particularly useful building block when assembling a graph of free groups with cyclic edges: in the abelianization, the sum of the boundary components in the peripheral structure of a surface-type pair (equipped with the induced orientation) vanishes. Calegari, originally motivated by the goal of proving that \emph{stable commutator length} (\emph{scl}) in a free group is rational (and computable), showed that a converse to this statement also holds. Specifically, he proved that \emph{scl} is computable and rational by finding subpairs of surface type (\emph{extremal surfaces}, see \cite[Proposition 2.74]{Cal09}). We will use Calegari's result, which we recall below, in the proof of \Cref{mainthm}. \par \smallskip

Let $F$ be a finitely generated free group, and let 
\[B_1(F;\mathbb{Z})=\Bigg\{A= \sum \limits_{i=1}^n \lambda_i g_i \;\Bigg\vert\; \lambda_i \in \mathbb{Z},\; g_i\in F,\;  A=0\;\text{ in } F^{\ab}\Bigg\} \le \mathbb{Z}[F]\] 
be the abelian group of integral (group) $1$-boundaries. In more detail, fixing a basis $a_1,\dots,a_n$ of $F$, $A\in B_1(F;\mathbb{Z})$ if and only if the total number of occurrences of each $a_i$ in the sum is zero: writing $\epsilon_j(g)$ to denote the sum of exponents of $a_j$ in $g$ (written as a reduced word in $(a_1,\ldots,a_n)$), for every $j$
\[
\lambda_1 \cdot \epsilon_j(g_1) + \cdots + \lambda_n \cdot \epsilon_j(g_n) = 0.
\]

Let $H\le B_1(F;\mathbb{Z})$ be the subgroup generated by elements of the form $g^d-dg$ and $g-hgh^{-1}$, for $g,h\in F$ and $d\in\mathbb{Z}$. Consider the quotient $B_1^H(F;\mathbb{Z})=B_1(F;\mathbb{Z})/H$. The elements of $B_1^H(F;\mathbb{Z})$ can be thought as formal sums of conjugacy classes of maximal cyclic subgroups of $F$ that vanish in $F^{\ab}$.

Let $\Sigma$ be a compact connected surface equipped with an orientation, and let $[\sigma_1],\dots,[\sigma_b]$ be the conjugacy classes in $\pi_1(\Sigma)$ that correspond to its boundary components (equipped with the orientation inherited from $\Sigma$). Any injective homomorphism $\psi:\pi_1(\Sigma)\rightarrow F$ gives rise to an element of $B_1^H(F;\mathbb{Z})$:
\[\partial_\psi\Sigma:=\psi(\sigma_1)+\dots+\psi(\sigma_b).\]
Note that different choices of representatives of the conjugacy classes $[\sigma_1],\ldots,[\sigma_b]$ will give the same element in $B_1^H(F;\mathbb{Z})$.

\begin{thm}[Calegari {\cite[Theorem 4.24(4)]{Cal09}}]\label{thm:Calegari-surfaces}
    Let $A\in B_1^H(F;\mathbb{Z})$ be a non-zero element. Then there is a compact, connected and oriented surface $\Sigma$, and an injective homomorphism $\psi:\pi_1(\Sigma)\rightarrow F$, such that $\partial_\psi\Sigma=\mu A$ for some $\mu\in\mathbb{Z}\setminus\{0\}$.
\end{thm}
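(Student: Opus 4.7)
The plan is to deduce this from Calegari's theory of stable commutator length (scl). Represent $F$ as the fundamental group of a rose graph $X$, so that the element $A = \sum \lambda_i g_i \in B_1(F;\mathbb{Z})$ corresponds to a map $\gamma: \bigsqcup S^1 \to X$ whose image is a null-homologous $1$-cycle. An \emph{admissible surface} for $\gamma$ is a compact oriented surface $S$ with a map $f:S \to X$ whose restriction to $\partial S$ factors through a finite covering $\partial S \to \bigsqcup S^1 \xrightarrow{\gamma} X$ of uniform degree $n(S)$. One defines
\[
\mathrm{scl}(A) \;:=\; \inf_S \frac{-\chi^-(S)}{2n(S)},
\]
ranging over admissible $S$ whose components have non-positive Euler characteristic. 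The theorem we want is essentially the statement that this infimum is attained, and attained by a $\pi_1$-injective surface.

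The first step would be to prove the \emph{rationality} of the infimum. I would cut every admissible $S$ along the preimages of the midpoints of the edges of $X$, decomposing $S$ into planar pieces at each vertex $v$ of $X$. Each such piece is determined combinatorially by the cyclic word in half-edges around $v$ traced by its boundary, so the total data of an admissible surface is encoded by a vector of non-negative integers counting how many pieces of each combinatorial type appear. Requiring these pieces to glue consistently along edges of $X$, and to have boundary mapping to a multiple of $\gamma$, cuts out a rational polyhedral cone $\mathcal{C} \subseteq \mathbb{R}^N$. The functional $-\chi/2n$ is linear on $\mathcal{C}$, so by linear programming its minimum is attained at a rational vertex of $\mathcal{C}$; clearing denominators yields an actual integral admissible surface $\Sigma_0$ realizing the infimum, with $\partial_{f_*}\Sigma_0 = \mu A$ for some $\mu \in \mathbb{Z}\setminus\{0\}$.

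The second step is to arrange $\pi_1$-injectivity. Suppose $f_*:\pi_1(\Sigma_0)\to F$ has a non-trivial kernel. Since $X$ is a graph and hence aspherical, any null-homotopic map from a circle to $X$ is null-homotopic in $X$; by the loop theorem applied to $(\Sigma_0,f)$ (equivalently, by a direct algebraic surgery argument for maps to graphs), one finds an essential simple closed curve $c \subseteq \Sigma_0$ with $f(c)$ null-homotopic in $X$. Compressing $\Sigma_0$ along $c$ produces a new admissible surface with the same boundary map and strictly larger Euler characteristic, contradicting the extremality of $\Sigma_0$. Discarding any sphere or disk components that arise from the compression (they map null-homotopically and contribute nothing to $\partial_{f_*}$) and then passing to a connected component carrying a non-zero multiple of $A$ in its boundary gives the required connected, oriented surface $\Sigma$ with $\psi := f_*$ injective and $\partial_\psi \Sigma = \mu' A$ for some $\mu'\in \mathbb{Z}\setminus\{0\}$.

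The main obstacle is the rationality step: setting up the polyhedral encoding (Calegari's ``sails'' construction) requires careful combinatorial bookkeeping to ensure that every rational point of $\mathcal{C}$ actually corresponds to a bona fide surface and not merely to a formal $\mathbb{Q}$-linear combination of local pieces. The $\pi_1$-injectivity step, by contrast, is a routine surgery argument once extremality is in hand, and the geometric realization of the cone vertex as an integral surface is what gives Calegari's Rationality Theorem its real content.
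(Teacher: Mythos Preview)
The paper does not prove this theorem; it is quoted directly from Calegari's monograph \cite[Theorem 4.24(4)]{Cal09} and used as a black box (the subsequent \Cref{cor:Calegari} is the only place where the paper extracts anything from it). So there is no ``paper's own proof'' to compare against.

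That said, your sketch is a fair outline of Calegari's actual argument: the polyhedral encoding of admissible surfaces and the linear-programming rationality step are the heart of \cite[Chapter 4]{Cal09}, and the $\pi_1$-injectivity of extremal surfaces via compression is exactly how Calegari argues. One genuine gap in your write-up is the passage to a \emph{connected} surface. You say ``passing to a connected component carrying a non-zero multiple of $A$'', but if $A$ is supported on several distinct conjugacy classes, a single component of an extremal surface need not bound a multiple of the full chain $A$---its boundary will be \emph{some} null-homologous subsum, not necessarily proportional to $A$. Calegari handles connectedness differently (via the structure of extremal/monotone surfaces, or by tubing components together in a way that preserves extremality and then re-running the compression argument); your sketch would need an additional idea here to close.
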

\begin{rmk}
    Technically, Calegari works with real and rational (group) $1$-boundaries, rather than with integral ones. However, up to taking integral multiples of the classes involved (and finite covers of the corresponding surfaces), everything can easily be assumed to be integral.
\end{rmk}

We will use the following special case Calegari's \Cref{thm:Calegari-surfaces}.

\begin{cor}\label{cor:Calegari}
    Let $(F,[\underline{w}])$ be a malnormal pair satisfying a non-trivial linear relation 
    \[ \sum \limits _{[w_i]\in [\underline{w}]} \lambda_iw_i=0\]
    in $F^\ab$, with $\lambda_i\in\mathbb{Z}$. Then
    \begin{enumerate}
        \item there exists a subpair $(H,[\underline{u}])\lepair (F,[\underline{w}])$ of surface-type, and
        \item $\pcl{[\underline{w}]}{[\underline{u}]}$ contains every $[w_i]$ that appears in the equation with coefficient $\lambda_i\not=0$.
    \end{enumerate}
\end{cor}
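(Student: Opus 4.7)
\bigskip\noindent\textbf{Proof proposal for \Cref{cor:Calegari}.}

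The plan is to apply Calegari's \Cref{thm:Calegari-surfaces} to the element $A = \sum_{i} \lambda_i w_i \in \mathbb{Z}[F]$, which lies in $B_1(F;\mathbb{Z})$ precisely because the relation $\sum_i \lambda_i w_i = 0$ holds in $F^{\ab}$. The first thing to check is that $A$ is non-zero in the quotient $B_1^H(F;\mathbb{Z})$ so that Calegari's theorem gives non-trivial output. Recall that $B_1^H(F;\mathbb{Z})$ is generated by conjugacy classes of primitive elements, subject to the identification $[g^{-1}] = -[g]$. Malnormality of $(F,[\underline{w}])$ guarantees that each $w_i$ is primitive and that the pairs $\{[w_i],[w_i^{-1}]\}$ are pairwise disjoint across the index $i$. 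Consequently, the classes $[w_1],\dots,[w_n]$ project to $\mathbb{Z}$-linearly independent elements of $B_1^H(F;\mathbb{Z})$, so the image of $A$ there has coefficient exactly $\lambda_i$ along $[w_i]$, and is non-zero as soon as some $\lambda_i \ne 0$.

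Next, apply \Cref{thm:Calegari-surfaces} to obtain a compact, connected, oriented surface $\Sigma$ and an injective homomorphism $\psi \colon \pi_1(\Sigma) \to F$ such that $\partial_\psi \Sigma = \mu A$ in $B_1^H(F;\mathbb{Z})$ for some $\mu \ne 0$. From Calegari's construction of extremal/admissible surfaces, the boundary components of $\Sigma$ may be taken to map via $\psi$ to powers of conjugates of the specific elements $w_i$ (not merely to primitive elements whose contributions happen to cancel to $\mu A$ in $B_1^H$); this is the crucial feature I will need, and is arguably the point that requires the most care in writing out. If $\Sigma$ has genus zero, the non-vanishing of $\mu A$ forces $b(\Sigma) \geq 3$, so by \Cref{lem:pos-gen} we can pass to a finite-sheeted cover $\Sigma'$ of positive genus; the boundary components of $\Sigma'$ still map to powers of conjugates of the $w_i$, and $\partial_{\psi} \Sigma' = k\mu A$ where $k$ is the degree of the cover. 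I therefore assume $\Sigma$ has positive genus.

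Set $H := \psi(\pi_1(\Sigma)) \leq F$ and let $[\underline{u}]$ consist of the conjugacy classes in $H$ corresponding to the oriented boundary components of $\Sigma$ via $\psi$. Because $\Sigma$ has positive genus, each boundary loop $\sigma_j$ generates a maximal cyclic subgroup of $\pi_1(\Sigma)$, so by injectivity of $\psi$, $\psi(\sigma_j)$ generates a maximal cyclic subgroup of $H$. Hence $\psi(\sigma_j)$ cannot be a proper power inside $H$ of any other element, which is exactly the minimality condition needed to conclude that $[\psi(\sigma_j)]_H$ is a (degree-$d_j$) elevation of $[w_{i(j)}]_F$ for the appropriate $i(j)$. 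This shows $(H,[\underline{u}]) \lepair (F,[\underline{w}])$ and, together with the isomorphism $H \cong \pi_1(\Sigma)$ carrying $[\underline{u}]$ to $\partial \Sigma$, proves (1).

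For (2), read off the coefficient of $[w_i]$ on both sides of $\partial_\psi \Sigma = k\mu A$ in $B_1^H(F;\mathbb{Z})$. Each boundary component contributes only to the class $[w_{i(j)}]$ it maps onto, and the total contribution to $[w_i]$ must equal $k\mu\lambda_i$. If $\lambda_i \ne 0$ then this coefficient is non-zero, forcing at least one boundary component $\sigma_j$ with $i(j) = i$; the corresponding class $[\psi(\sigma_j)]_H \in [\underline{u}]$ is then an elevation of $[w_i]$, so $[w_i] \in \pcl{[\underline{w}]}{[\underline{u}]}$. The main obstacle I foresee is the bookkeeping around Calegari's theorem -- specifically, extracting from an equation in $B_1^H(F;\mathbb{Z})$ the more refined geometric statement that each individual boundary component of $\Sigma$ maps to a power of a conjugate of some $w_i$; once this is in hand, the rest is essentially a linear independence argument powered by malnormality.
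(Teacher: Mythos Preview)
Your proposal is correct and follows the same route as the paper's proof: apply \Cref{thm:Calegari-surfaces} to $A = \sum_i \lambda_i w_i$, set $H = \psi(\pi_1(\Sigma))$ with $[\underline{u}]$ given by the images of the boundary components, and read off both conclusions. You are in fact more careful than the paper on several points (non-vanishing of $A$ in $B_1^H$ via malnormality, the genus-zero case, and explicitly comparing coefficients for part~(2)), and your flagged concern---that each boundary component of $\Sigma$ individually lands on a power of a conjugate of some $w_i$, rather than merely summing to $\mu A$ after possible cancellation in $B_1^H$---is precisely the subtlety the paper also leaves to the admissibility in Calegari's extremal-surface construction.
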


\begin{proof}
    Let $(F,[\underline{w}])$ be a malnormal pair satisfying some non-trivial equation $\sum \lambda_iw_i=0$ in $F^\ab$, with $\lambda_i\in\mathbb{Z}$; define $A=\sum \lambda_iw_i\in B_1^H(F,\mathbb{Z})$. Let $\psi:\pi_1(\Sigma)\rightarrow F$ be the injective homomorphism given by \Cref{thm:Calegari-surfaces}, let $\sigma_1,\dots,\sigma_k\in\pi_1(\Sigma)$ be elements representing the conjugacy classes corresponding to boundary components of $\Sigma$, and let $H\le F$ be the subgroup given by the image of $\psi$.\par \smallskip
    Note that $\sigma_1,\dots,\sigma_k$ are not proper powers in $\pi_1(\Sigma)$, and that they lie in distinct conjugacy classes of maximal cyclic subgroups. Since $\partial_\psi\Sigma$ is a multiple of $A$ in $B_1^H(F;\mathbb{Z})$, $\psi$ must send each of the elements $\sigma_i$ to an elevation of a conjugacy class in $[\underline{w}]$ to $H$. This gives rise to a peripheral structure $[\underline{u}]$ on $H$, making $(H,[\underline{u}])\lepair (F,[\underline{w}])$ into a subpair of surface-type, which completes the proof.
\end{proof}

It is worth noting that Calegari has already used \Cref{thm:Calegari-surfaces} in the context of hyperbolic graphs of free groups with cyclic edge groups; he showed that such a group with non-vanishing second homology must contain a closed, hyperbolic surface subgroup \cite[Theorem 3.4]{cal:subs}. Inspired by Calegari's techniques, Wilton was able to eliminate the aforementioned homological requirement, and prove that every non-free hyperbolic graph of free groups amalgamated along cyclic subgroups contains a surface subgroup \cite[Theorem A]{Wil18}. Wilton's proof relies on ``local analysis'' of pairs induced by a graph of groups at the different vertex groups (and like Calegari's rationality theorem, utilizes linear programming techniques). \par \smallskip
This shift—from requiring non-vanishing second homology to assuming non-freeness—also manifests at the level of pairs. When seeking a surface subpair (a building block for a surface subgroup), instead of assuming that the conjugacy classes in the peripheral structure $[\underline{w}]$ of a malnormal pair $(F,[\underline{w}])$ satisfy a non-trivial linear relation in the abelianization as in \Cref{thm:Calegari-surfaces}, Wilton simply assumes that the pair $(F,[\underline{w}])$ is one-ended. We finish by recalling a few definitions and stating Wilton's theorem.

\begin{defn}
    A pair $(H,[\underline{u}])$ is of \emph{weak surface-type} if it is isomorphic to a free product of pairs of surface-type.
\end{defn}

\begin{defn}
    A subpair $(H,[\underline{u}])\lepair (F,[\underline{w}])$ is called \emph{admissible} (of degree $d\in\mathbb{N}$) if for every $[w_i]\in[\underline{w}]$ we have that
    \[
        \sum\limits_{[u]\in[\underline{u}]\text{ elevation of }[w_i]}\deg_{[w_i]}[u]=d.
    \]
\end{defn}

\begin{thm}[Wilton {\cite[Theorem 5.11]{Wil18}}]\label{thm:Wilton-surfaces}
    Let $(F,[\underline{w}])$ be a malnormal, one-ended pair. Then there exists an admissible subpair $(H,[\underline{u}])\lepair (F,[\underline{w}])$ of weak surface-type.
\end{thm}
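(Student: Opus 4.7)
The plan is to recast the existence of an admissible weak surface-type subpair as a rational linear programming problem and attack the problematic case via duality. Model $(F,[\underline{w}])$ by a $2$-complex $X$: take a graph $\Gamma$ with $\pi_1(\Gamma) = F$ and attach a circle for each $[w_i] \in [\underline{w}]$ along the loop representing $w_i$. To any compact oriented surface $\Sigma$ together with an essential map $\psi:\Sigma \to X$ sending $\partial\Sigma$ onto the peripheral circles, assign a ``boundary vector'' in $\mathbb{Q}^{|[\underline{w}]|}$ whose $i$-th coordinate records the total signed covering degree of $\partial\Sigma$ over the $w_i$-circle. Normalizing by $-\chi(\Sigma)$ and taking convex hulls produces a rational polytope $P$; points on the diagonal of $P$ correspond, after clearing denominators, to admissible subpairs of weak surface-type.

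First, I would observe that \Cref{thm:Calegari-surfaces} (together with \Cref{cor:Calegari}) promotes any rational point of $P$ on the diagonal to a genuine disjoint union of surface-type subpairs, yielding an admissible weak surface-type subpair; so the theorem reduces to showing $P$ contains such a diagonal point. If it does not, by Farkas' lemma there is a real linear functional $\phi$ on $\mathbb{R}^{|[\underline{w}]|}$ that is strictly positive on the admissible diagonal ray yet non-positive on every vector $v(\psi)$ coming from an essential surface map $\psi:\Sigma \to X$. Intuitively, $\phi$ assigns weights to the peripheral curves in such a way that no surface can balance them, and malnormality of $(F,[\underline{w}])$ ensures these weights are well-defined on conjugacy classes of maximal cyclic subgroups and interact cleanly with the local combinatorics.

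The heart of the argument, and the step I expect to be the main obstacle, is to convert such a separating functional $\phi$ into a non-trivial free splitting of the pair $(F,[\underline{w}])$, contradicting one-endedness. The approach is a local-to-global Whitehead-graph analysis: $\phi$ detects an obstruction to being able to locally ``cancel'' boundary arcs at vertices of $X$, and this obstruction globalizes to a separating cut of $X$ in which each $w_i$ lies on one side. Such a cut yields a non-trivial action of $F$ on a tree with trivial edge stabilizers in which every $[w_i]$ is elliptic, i.e. exactly a non-trivial free splitting of $(F,[\underline{w}])$. Malnormality is essential here so that the peripheral curves sit inside single vertex groups after the splitting, while one-endedness rules out the splitting's existence, producing the required contradiction and completing the proof.
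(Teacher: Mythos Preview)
The paper does not prove this theorem; it is quoted as \cite[Theorem 5.11]{Wil18} and used as a black box. So there is no ``paper's own proof'' to compare against, only Wilton's original argument.

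Your outline has the right large-scale shape of Wilton's proof --- a rational linear-programming setup, duality, and a Whitehead-graph argument to extract a splitting --- but the crucial step is not really addressed. The passage ``convert such a separating functional $\phi$ into a non-trivial free splitting of the pair $(F,[\underline{w}])$'' is the entire content of the theorem, and you have only named the mechanism (``local-to-global Whitehead-graph analysis'') without indicating why a separating functional forces a cut vertex or disconnection in the relevant Whitehead graph. In Wilton's paper this step occupies several sections: one must show that if no admissible surface exists, then some local piece (a vertex link) fails a combinatorial ``pairing'' condition, and then invoke a relative version of Whitehead's cut-vertex lemma to produce a free splitting relative to the peripheral structure. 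The inference from a linear inequality on boundary degrees to a graph-theoretic cut is genuinely delicate and is where malnormality and the precise definition of the polytope do real work.

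There is also a setup issue: your polytope $P$ is described as a convex hull of normalized boundary vectors of surfaces, but a priori you have not exhibited any surfaces at all, so $P$ could be empty and Farkas' lemma gives you nothing to separate. Wilton's formulation avoids this by working with a cone generated by local pieces (arcs and ``fatgraph'' data at each vertex of $\Gamma$) rather than with global surfaces; non-emptiness of the relevant cone is then immediate, and the LP argument shows that the admissible ray lies in it. Your appeal to \Cref{thm:Calegari-surfaces} to promote a rational diagonal point to an actual surface is also not quite right: Calegari's theorem realizes homology classes, not prescribed covering degrees over each peripheral curve, so admissibility does not follow directly from it.
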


\begin{rmk}
    Note that Wilton \cite{Wil18} does not require that the surfaces are orientable or of genus $\ge1$; the surfaces are only required to be flexible (see {\cite[Theorem 5.6]{Wil18}}). However, these extra properties can be easily obtained by taking a finite cover of each surface (while, at the same time, preserving the admissibility).
\end{rmk}

Finally, our arguments in \Cref{sec:main} require handling multiple surface subpairs simultaneously within a single pair. The following proposition ensures that they can be jointly embedded into a common finite-index pull-back pair.

\begin{prop}\label{prop:two-surfaces-in-one-subgroup}
    Let $(F,[\underline{w}])$ be a malnormal pair, and let $(S_1,[\underline{v}_1]),(S_2,[\underline{v}_2])\lepair (F,[\underline{w}])$ be two surface-type subpairs. Then there is a finite-index pull-back pair $(F',[\underline{w}'])\le (F,[\underline{w}])$ such that, for $i=1,2$, the finite-index pull-back pair $(S_i\cap F',[\underline{v}_i'])\lepair (S_i,[\underline{v}_i])$ satisfies $(S_i\cap F',[\underline{v}_i'])\embedpair (F',[\underline{w}'])$.
\end{prop}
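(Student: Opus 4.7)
The plan is to chain \Cref{lem:fi-embed-1} with \Cref{lem:fi-embed-2} and then invoke transitivity of pull-backs. First, I would apply \Cref{lem:fi-embed-1} separately to each inclusion $(S_i,[\underline{v}_i])\lepair (F,[\underline{w}])$, for $i=1,2$: this produces a finite-index subgroup $F_i\le F$ and a corresponding finite-index pull-back pair $(F_i,[\underline{w}_i])\lepair (F,[\underline{w}])$ such that $(S_i,[\underline{v}_i])\embedpair (F_i,[\underline{w}_i])$. I would then set $F'=F_1\cap F_2$, which is again a finite-index subgroup of $F$ (and of each $F_i$), and let $(F',[\underline{w}'])\lepair (F,[\underline{w}])$ denote the associated finite-index pull-back pair.

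The second step is to transport each embedding $(S_i,[\underline{v}_i])\embedpair (F_i,[\underline{w}_i])$ down to $F'$ via \Cref{lem:fi-embed-2}. To apply the lemma I must check that no element of $\underline{w}_i$ is a proper power in $F_i$. Since $(F,[\underline{w}])$ is malnormal (\Cref{def:malnormal-pair}), each element of $\underline{w}$ generates a maximal cyclic subgroup of $F$ and so is not a proper power. I claim this persists under elevation: if $w\in F$ is not a proper power and $w^d\in F_i$ is an elevation (so $d$ is minimal with $w^d\in F_i$), and $w^d=u^k$ in $F_i\le F$ with $k\ge 2$, then, since centralizers in free groups are cyclic, $u$ and $w$ lie in a common cyclic subgroup; $w$ being non-proper forces $u=w^{\pm d/k}$, contradicting the minimality of $d$. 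Hence \Cref{lem:fi-embed-2} applies and yields, for each $i$, an embedding
\[
(S_i\cap F',\,[(\underline{v}_i)^*])\embedpair (F',\,[(\underline{w}_i)^*]),
\]
where $[(\underline{w}_i)^*]$ is the pull-back of $[\underline{w}_i]$ from $F_i$ to $F'$, and $[(\underline{v}_i)^*]$ is the pull-back of $[\underline{v}_i]$ from $S_i$ to $S_i\cap F'=S_i\cap F_i\cap F'$.

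The final step is to identify these pull-back structures with the ones demanded in the statement. Because an elevation of an elevation is an elevation (as recorded in the remark preceding \Cref{fig:elevation}), the set of elevations to $F'$ of the elevations to $F_i$ of elements of $[\underline{w}]$ is exactly the set of elevations to $F'$ of $[\underline{w}]$. Thus $[(\underline{w}_i)^*]=[\underline{w}']$. The identical transitivity argument identifies $[(\underline{v}_i)^*]$ with $[\underline{v}_i']$, the pull-back of $[\underline{v}_i]$ from $S_i$ to $S_i\cap F'$. This delivers $(S_i\cap F',[\underline{v}_i'])\embedpair (F',[\underline{w}'])$ for $i=1,2$, as required.

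The only genuinely non-formal point is the verification that the non-proper-power hypothesis needed for \Cref{lem:fi-embed-2} survives passage from $(F,[\underline{w}])$ to $(F_i,[\underline{w}_i])$; everything else reduces to routine bookkeeping with pull-back peripheral structures. The argument is in fact transparent because malnormality of $(F,[\underline{w}])$ gives precisely the hypothesis of \Cref{lem:fi-embed-2}, and intersecting two finite-index subgroups incurs no cost.
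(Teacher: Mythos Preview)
Your proof is correct and follows essentially the same approach as the paper: both combine \Cref{lem:fi-embed-1} with \Cref{lem:fi-embed-2}, using malnormality of $(F,[\underline{w}])$ to guarantee the non-proper-power hypothesis of the latter (which you verify explicitly, while the paper simply cites malnormality). The only difference is cosmetic---the paper proceeds sequentially (embed $S_1$ in some $F'$, then embed $S_2\cap F'$ in a further $F''\le F'$, and invoke \Cref{lem:fi-embed-2} once for $S_1$), whereas you argue symmetrically (embed each $S_i$ in its own $F_i$, set $F'=F_1\cap F_2$, and invoke \Cref{lem:fi-embed-2} twice).
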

\begin{rmk}
   The subgroup $F'$ in \Cref{prop:two-surfaces-in-one-subgroup} above can be chosen so that $S_1\cap S_2\le F'$. Furthermore, standard applications of Marshall Hall's theorem imply that $S'_1$ and $S'_2$ can be chosen to be free factors of $F'$ (albeit not simultaneously).
\end{rmk}

\begin{proof}
    By \Cref{lem:fi-embed-1}, there exists a finite-index pull-back pair $(F',[\underline{w}'])\lepair (F,[\underline{w}])$ such that $(S_1,[\underline{v}_1])\embedpair (F',[\underline{w}'])$. Consider the finite-index pull-back pair $(S_2\cap F',[\underline{v}_2'])\lepair (S_2,[\underline{v}_2])$, and apply \Cref{lem:fi-embed-1} again to obtain a finite-index pull-back pair $(F'',[\underline{w}''])\lepair (F',[\underline{w}'])$ such that $(S_2\cap F',[\underline{v}_2'])\embedpair (F'',[\underline{w}''])$. Now consider the finite-index pull-back pair $(S_1\cap F'',[\underline{v}_1''])\lepair (S_1,[\underline{v}_1])$. Since $(F,[\underline{w}])$ is malnormal, \Cref{lem:fi-embed-2} applies. We obtain that $(S_1\cap F'',[\underline{v}_1''])\embedpair (F'',[\underline{w}''])$, as required.
\end{proof}

\section{\texorpdfstring{$\partial$}{∂}-equations and branched surfaces} \label{sec:branched}

Every finitely generated abelian group $G$ is obtained by surjecting a finitely generated free abelian group $\mathbb{Z}^r$ to a system of linear constraints. The structure theorem for finitely generated abelian groups states that these constraints can be chosen to be of the form $n_i \cdot x_i = 0$. In this section, we take a slightly different approach and show that every finitely generated abelian group can be realized as the abelianization of a \emph{branched surface} (see \Cref{subsec:branch}), possibly after taking a direct product with some $\mathbb{Z}^n$. \par \smallskip

Our argument is essentially an algorithm that converts a finite system of linear equations of the form
\[
\Xi = \{n_i\cdot x_i = 0 \; \vert \; 1\le i \le r,\; n_i \in \mathbb{Z}\}
\]
into a system $\Psi$ of \emph{$\partial$-equations}: equations of the form $\epsilon_1\cdot x_1 + \epsilon_2\cdot x_2 + \cdots + \epsilon_k \cdot x_k=0$ where $\epsilon_i \in \{\pm 1\}$. Every $\partial$-equation $\epsilon_1\cdot x_1 + \epsilon_2\cdot x_2 + \cdots + \epsilon_k \cdot x_k=0$ admits natural geometric counterparts: these come in the form of compact, connected and orientable surfaces $\Sigma$ of genus $\ge 1$ with $k$ boundary components; the signs $\epsilon_i$ in the $\partial$-equation correspond to a choice of orientations on the boundary components $\sigma_1,\ldots,\sigma_k$ in $\partial \Sigma$. \par \smallskip

\begin{conv}
    We treat a system $\Psi$ of $\partial$-equations as a multiset: we allow equations to appear multiple times in $\Psi$. This clearly has no effect when we interpret $\Psi$ as a system of constraints on a free abelian group. However, this technical choice gives us more flexibility when constructing geometric objects that carry the information encoded by $\Psi$ (and it will play a role in \Cref{subsec:standard}).
\end{conv}

Every system $\Psi$ of $\partial$-equations in variables $x_1,\ldots,x_n$ in turn gives rise to (many) graphs of spaces $\mathcal{X}_\Psi$ (in fact, graphs of free groups with cyclic edge groups). These graphs of spaces differ only in the genera of the surfaces used to represent the $\partial$-equations in $\Psi$. The vertex spaces of such $\mathcal{X}_\Psi$ come in two forms:
\begin{enumerate}
    \item \emph{variable} vertex spaces (cyclic vertex spaces): for each variable $x_i$ of $\Psi$ there is a vertex space $X_i$ homeomorphic to (an oriented) circle $S^1$;
    \item \emph{equation} vertex spaces (surface-type vertex spaces): for every $\partial$-equation $\psi = \epsilon_1\cdot x_1 + \epsilon_2\cdot x_2 + \cdots + \epsilon_k \cdot x_k=0$ in $\Psi$ there is a vertex space $X_\psi$ which is a compact, connected and orientable surface of positive genus and with $k$ boundary components $\sigma  ^\psi_1,\ldots, \sigma^\psi_k$. We fix an orientation $O_\psi$ on every $X_\psi$ (which induces an orientation on every $\sigma^\psi _i$). 
\end{enumerate}

The underlying graph $\Gamma_\Psi$ of $\mathcal{X}_\Psi$ is bipartite, with an edge connecting a vertex with variable vertex space $X_i$ to a vertex with equation vertex space $X_\psi$ if and only if the variable $x_i$ appears in the equation $\psi$. The edge spaces of $\mathcal{X}_\Psi$ are (oriented) circles, and for every edge $e\in \mathrm{E}(\Gamma_\Psi)$ with $\tau(e)$ a variable vertex $X_j$, the edge map $i_e$ is an orientation-preserving homeomorphism. If $\iota(e)$ is an equation vertex, the edge map $i_{\overline{e}}$ identifies $X_e$ with the boundary component $\sigma^\psi_j$; the sign $\epsilon_i\in \{ \pm 1\}$ indicates whether the gluing map respects the aformentioned orientations on $X_e$ and $\sigma^\psi_i$ (i.e. $\epsilon_i=1$) or not ($\epsilon_i=-1$). We invite the reader to verify that the abelianization of the geometric realization of $\mathcal{X}_\Psi$ is isomorphic to a finitely generated abelian group obtained by quotienting out the system of $\partial$-equations $\Psi$ from a free abelian group $\mathbb{Z}^r$ (for some $r \ge n$). A graph of spaces $\mathcal{X}_\Psi$ obtained in this manner is called a \emph{$\Psi$-complex}. \par \smallskip

We say that a system of equations $\Xi'$ in variables $x_1,\ldots, x_{n'}$ is a \emph{void extension} of a system $\Xi$ in variables $x_1,\ldots,x_n$ if $n'\ge n$, but the equations in $\Xi'$ are exactly the equations in $\Xi$ (that is, the variables $x_{n+1},\ldots, x_{n'}$ do not appear in any equation in $\Xi'$). Two systems of equations $\Xi$ and $\Psi$ in variables $x_1,\ldots,x_n$ are \emph{equivalent} if the two quotients of $\mathbb{Z}^n=\gen{x_1,\ldots,x_n}$ by $\Xi$ and $\Psi$ are isomorphic.

\begin{example}
    Let $\Psi$ be a system of $\partial$-equations and suppose that every variable of $\Psi$ appears in at most two equations in $\Psi$. In this case, the system $\Psi$ is equivalent to a void extension of either the empty system of equations, or the system $\{2x_1=0\}$. It is easiest to verify this geometrically: let $\mathcal{X}_\Psi$ be a $\Psi$-complex, and note that the condition on $\Psi$ implies that the geometric realization $X_\Psi$ of $\mathcal{X}_\Psi$ is a compact surface. If $X_\Psi$ has a non-empty boundary, or is closed and orientable, then $\pi_1(X_\Psi)^{\ab}$ is free abelian; therefore $\Psi$ is equivalent to a void extension of the empty system of equations. If $X_\Psi$ is a closed and non-orientable surface, then the abelianization of $\pi_1(X_\Psi)$ takes the form $\mathbb{Z}^r\oplus \mathbb{Z}/2\mathbb{Z}$, and $\Psi$ is (equivalent to) a void extension of $\{2x_1=0\}$.
\end{example}

Since our goal is to find \emph{arbitrary} torsion in the abelianization of graphs of free groups with $\mathbb{Z}$ edges, the remainder of the section focuses on systems $\Psi$ of $\partial$-equations in which some variables appear in more than two equations. We next look at \emph{branched surfaces}, which can be thought of as $\Psi$-complexes for such $\Psi$.

\subsection{Branched surfaces} \label{subsec:branch}

Before we begin our discussion of branched surfaces, a word of caution is in order: throughout this paper, the term branched surface \emph{does not} refer to a branched cover of a surface, but rather to a compact $2$-complex which is locally modeled on a surface, except along a finite collection of embedded loops where branching occurs. Formally:

\begin{defn}
    \label{def:branched-surface}
    A \emph{branched surface} is a hyperbolic, one-ended graph of free groups amalgamated along cyclic subgroups $\mathcal{B}$ which satisfies:
    \begin{enumerate}
        \item $\pi_1(\mathcal{B})$ is not the fundamental group of a compact surface, and
        \item the normal form of $\mathcal{B}$ (see \Cref{prop:normalization}) does not contain rigid vertices.
    \end{enumerate}
\end{defn}

\begin{figure}[h]
    \centering
    \includegraphics[width=0.6\linewidth]{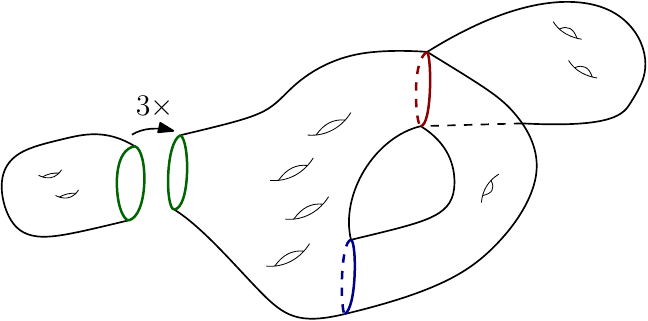}
    \caption{A branched surface $\mathcal{B}$. The branching locus $\mathcal{C}$ is highlighted in green and in red (the green connected component is of branching valence $4$, and the red connected component is of branching valence $3$). Note that the blue curve (where two surface pieces are glued) is not a part of the branching locus.}
    \label{fig:branched surface}
\end{figure}

\begin{rmk}
    In particular, by the last condition of \Cref{prop:normalization}, the degree of every cyclic vertex in the underlying graph $\Gamma$ of $\mathcal{B}$ is at least $3$.
\end{rmk}

By abuse of notation, we also refer to the geometric realization $B$ of $\mathcal{B}$ as a branched surface. The \emph{branching locus} $\mathcal{C}$ of a branched surface $\mathcal{B}$ consists of all points $x\in B$ that do not admit a neighbourhood homeomorphic to $\mathbb{R}^2$. If $\mathcal{B}$ is in normal form, the branching locus of $\mathcal{B}$ coincides with the collection of circles
\[
\bigsqcup \limits_{v \text{ is a cyclic vertex of } \mathcal{B}} X_v \subset B.
\]

Each connected component $C$ of the branching locus $\mathcal{C}$ of $\mathcal{B}$ has a \emph{(branching) valence}: an integer $\ge 3$ such that for every $x\in C$, there is a neighbourhood of $x$ in $B$ that is modeled after $d$ half-planes $\mathbb{R} \times \mathbb{R}_{\ge 0}$ glued together along the $d$ copies of the line $\mathbb{R} \times \{0 \}$. If $\mathcal{B}$ is in normal form, and $C$ is a connected component of the branching locus $\mathcal{C}$ of $\mathcal{B}$, then the (branching) valence of $C$ coincides with its valence in the underlying graph of $\mathcal{B}$.

\begin{rmk} Suppose that $\mathcal{B}$ is a branched surface in normal form. Then there exists a system $\Psi_{\mathcal{B}}$ of $\partial$-equations such that $\mathcal{B}$ is a $\Psi_{\mathcal{B}}$-complex. Moreover, every variable in $\Psi_{\mathcal{B}}$ appears in at least $3$ $\partial$-equations. Conversely, given a system $\Psi$ of $\partial$-equations, every associated $\Psi$-complex is a branched surface if and only if: 
\begin{enumerate} 
\item every variable appears in at least $2$ equations in $\Psi$, and 
\item some variable appears in more than $2$ equations in $\Psi$. 
\end{enumerate}
\end{rmk}

\subsection{Standard branched surfaces}\label{subsec:standard}

We continue by proving that every branched surface admits a \emph{standard} precover; such a precover is a particularly simple branched surface, and it will serve as fertile ground for constructing further, more intricate precovers.

\begin{defn} \label{def:standard}
    A \emph{standard branched surface} is a branched surface $\mathcal{B}$ satisfying (see \Cref{fig:standard-branched-surface}):
    \begin{enumerate}
        \item $\mathcal{B}$ is in normal form.
        \item $\mathcal{B}$ has $3$ surface-type vertices, denoted $\Sigma$, $\Theta$ and $\Pi$, each of which has $2$ boundary components.
        \item $\mathcal{B}$ has $2$ cyclic vertices, which we denote by $C_1$ and $C_2$.
        \item Fixing orientations on $C_1$ and $C_2$, the boundary components of each of $\Sigma$, $\Theta$ and $\Pi$ are identified with $C_1$ and $C_2$ so that
        \[
        \partial[\Sigma]=\partial[\Theta]=\partial[\Pi]=C_1-C_2,
        \]
        where $[\Sigma]$, $[\Theta]$ and $[\Pi]$ are the fundamental classes of $\Sigma$, $\Theta$ and $\Pi$,
    \end{enumerate}
\end{defn}

\begin{rmk}
    Standard branched surfaces are exactly the $\Psi$-complexes of the system of $\partial$-equations which consists of the equation $x_1-x_2$, repeated three times.
\end{rmk}

\begin{figure}[h!]
    \centering
    \includegraphics[width=0.5\linewidth]{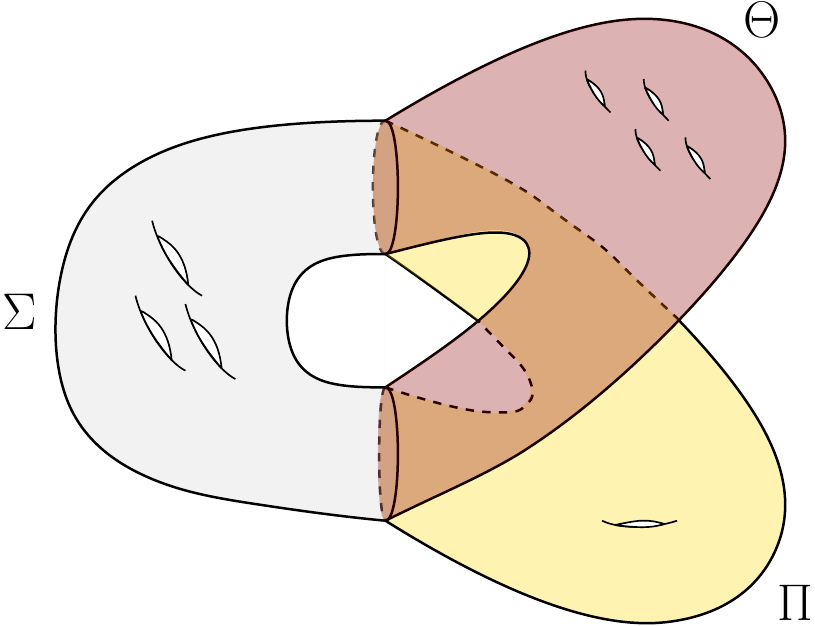}
    \caption{A standard branched surface.}
    \label{fig:standard-branched-surface}
\end{figure}

\begin{lem}[Standard precovers]\label{lem:standard-pre-covers}
    Let $\mathcal{B}$ be a branched surface. Then there exists a precover $\mathcal{B}'$ of $\mathcal{B}$ which is a standard branched surface.
\end{lem}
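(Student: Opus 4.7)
By Definition~\ref{def:branched-surface}, the normal form of $\mathcal{B}$ contains no rigid vertices, so item~(\ref{itm:last}) of Proposition~\ref{prop:normalization} forces every cyclic vertex of $\mathcal{B}$ to have valence at least $3$ (any valence-$2$ cyclic vertex is prohibited because it would be adjacent to two flexible vertices). The plan is to fix such a cyclic vertex $c$, together with three edges $e_1, e_2, e_3$ incident to $c$, terminating at surface-type vertices $v_1, v_2, v_3$ (not necessarily distinct) with corresponding peripheral classes $[\sigma_i]\in[\partial X_{v_i}]$ attached via $e_i$. The precover $\mathcal{B}'$ I aim to construct will have two cyclic vertex spaces $C_1, C_2$, both degree-$1$ covers of $c$ (endowed with opposite chosen orientations), and three surface-type vertex spaces $\Sigma_1, \Sigma_2, \Sigma_3$ each covering the corresponding $X_{v_i}$.

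For each $i$, I would invoke Lemma~\ref{lem:covers-of-surfaces} to construct a connected double cover $\Sigma_i \to X_{v_i}$ in which $\sigma_i$ splits into two degree-$1$ elevations; the parity constraint of the lemma can be met by choosing the lifts of the remaining boundary classes of $X_{v_i}$ (to either split into two degree-$1$ elevations or lift to a single degree-$2$ elevation) so that the total number of boundary components of $\Sigma_i$ has the right parity. Since $X_{v_i}$ has positive genus by item~(\ref{itm:flexible}) of Proposition~\ref{prop:normalization}, the Euler characteristic computation $\chi(\Sigma_i) = 2\chi(X_{v_i}) \le -2$ together with the fact that $b(\Sigma_i) \ge 2$ yields $g(\Sigma_i) \ge 1$.

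The precover $\mathcal{B}'$ is then assembled as follows. For each $i$, identify one of the two degree-$1$ elevations of $\sigma_i$ in $\Sigma_i$ with $C_1$ via an orientation-preserving edge map, and the other with $C_2$ via an orientation-reversing edge map; any additional boundary components of $\Sigma_i$ coming from other peripheral classes of $X_{v_i}$ persist as hanging elevations in the sense of Section~\ref{subsec:precovers}. By the formula for the boundary of the fundamental class of an oriented cover (the lemma preceding Lemma~\ref{lem:boundary-non-orient}), this yields $\partial[\Sigma_i] = C_1 - C_2$ for each $i$. That the resulting $\mathcal{B}'$ is a precover of $\mathcal{B}$ follows immediately from the construction of its vertex spaces as covers and its edge spaces as elevations; the remaining clauses of Definition~\ref{def:standard} are built in by design.

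The principal delicate point is the orientation bookkeeping: with the naive choice of orientations inherited from the covers $\Sigma_i \to X_{v_i}$, one would obtain $\partial[\Sigma_i] = C_1 + C_2$ rather than the required $C_1 - C_2$. This is resolved by orienting $C_2$ opposite to the orientation of $c$, so that the edge map between the second elevation of $\sigma_i$ and $C_2$ reverses orientation. A secondary (routine) matter is checking that the parity flexibility in Lemma~\ref{lem:covers-of-surfaces} always suffices to achieve the desired splitting of $\sigma_i$; this follows because for each additional boundary class of $X_{v_i}$ we have a binary choice of lift type, allowing us to adjust $b(\Sigma_i)$ modulo $2$ as needed.
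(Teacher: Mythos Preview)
Your construction has a genuine gap: the precover $\mathcal{B}'$ you build is \emph{not} a standard branched surface. Definition~\ref{def:standard} requires each of the three surface-type vertices to have exactly two boundary components, but your $\Sigma_i$ is a double cover of the single vertex space $X_{v_i}$ and therefore carries elevations of \emph{every} peripheral class of $X_{v_i}$, not just of $[\sigma_i]$. You acknowledge these extra boundary components and leave them as hanging elevations, but that does not make them disappear from the vertex space: the induced pair at $\Sigma_i$ in your $\mathcal{B}'$ is $(\pi_1(\Sigma_i),\{[\sigma_i^{(1)}],[\sigma_i^{(2)}]\})$, which is neither of surface-type (the peripheral structure omits boundary components of $\Sigma_i$) nor one-ended, so $\mathcal{B}'$ is not even a branched surface in the sense of Definition~\ref{def:branched-surface}, let alone a standard one.

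The paper's proof addresses exactly this difficulty. Rather than covering a single vertex piece, it assembles each of $\Sigma,\Theta,\Pi$ by taking copies of \emph{all} the surface-type vertex spaces of $\mathcal{B}$ and gluing them along copies of the other cyclic vertices, so that every boundary component except the two designated elevations is capped off internally. The price of this gluing is that the resulting surfaces may be non-orientable, or if orientable may have $\partial[\Sigma]=\sigma_1+\sigma_2$ rather than $\sigma_1-\sigma_2$; the paper's Steps~2 and~3 handle these cases via orientable double covers and further gluings. Your orientation trick (reversing the chosen orientation on $C_2$) is valid in isolation, but it only becomes relevant once one has honest two-boundary surfaces, and for surfaces built by gluing it does not suffice on its own---that is why the paper's case analysis is needed.
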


\begin{proof}
    Up to taking a finite-sheeted cover of $\mathcal{B}$, we can assume by \Cref{prop:normalization} that $\mathcal{B}$ is in normal form. Fix a cyclic vertex space $C$ of $\mathcal{B}$ in the branching locus $\mathcal{C}$, and fix $3$ (pairwise distinct) boundary components $\sigma$, $\theta$ and $\pi$ which are identified with $C$, coming from three (not necessarily distinct) surfaces. \par \smallskip

\begin{enumerate}[label=\textbf{Step \arabic*:}]
    \item \textbf{(The three pieces).} Take two copies of each surface-type vertex of $\mathcal{B}$. For each cyclic vertex $C' \ne C$, we take as many copies of $C'$ as its branching degree in $\mathcal{B}$. For each cyclic vertex in this collection, choose (arbitrarily) two surface-type vertices that can be glued to it. The precover of $\mathcal{B}$ obtained after gluing is a (possibly disconnected) surface with boundary containing $2\cdot \deg (C)$ hanging elevations; these include two hanging elevations corresponding to each of $\sigma$, $\theta$ and $\pi$. \par \smallskip
    Next, take $\deg(C)-1$ copies of $C$, and glue to each of them two hanging elevations, to obtain a surface $S$ with two boundary components $\sigma_1$ and $\sigma_2$ which are elevations of $\sigma$. We may assume that $S$ is connected (but possibly non-orientable), for if not we replace it with a connected component containing $\sigma_1$ (and pass to a $2$-sheeted cover with two boundary components if $\sigma_2$ is not in the same connected component as $\sigma_1$). Similarly, we construct connected (possibly non-orientable) surfaces $T$ and $P$ with boundary components $\theta_1,\theta_2$ and $\pi_1,\pi_2$ respectively. 

    \item \textbf{(Orientability).} The goal of this step is to make sure that the surfaces obtained in the previous step are orientable. Suppose that $S$ is non-orientable, and let $S'$ be its orientable double cover. By \Cref{lem:boundary-non-orient}, $\partial S'$ has four boundary components $\sigma_1',\sigma_1'',\sigma_2'$ and $\sigma_2''$ with 
    \[\partial[S']=(\sigma_1'-\sigma_1'')+(\sigma_2'-\sigma_2'').\] 
    If $T$ is orientable, and $\partial[T]=\theta_1\pm\theta_2$, glue $\theta_1$ and $\theta_2$ to two of the four boundary components of $S'$; by matching orientations correctly (that is, choosing the boundary components of $S'$ to which we glue $\theta_1$ and $\theta_2$), we obtain an orientable surface $\Sigma$ with two boundary components, each of which is a degree $1$ elevation of $\sigma$, as desired. \par \smallskip
    
    Suppose now that $T$ is non-orientable, and let $T'$ be its orientable $2$-sheeted cover with boundary 
    \[\partial[T']=(\theta_1'-\theta_1'')+(\theta_2'-\theta_2'').\]
    Here $\theta_i'$ and $\theta_i''$ both cover $\theta_i$ with degree $1$. Using \Cref{lem:covers-of-surfaces}, take a further finite-sheeted covering $\overline{S}$ of $S'$ with six boundary components:
    \begin{enumerate}
        \item  $s_1$ and $s_1'$, each covering $\sigma_1'$ with degree $1$,
        \item $s_2$ and $s_2'$ each covering $\sigma_2''$ with degree $1$, 
        \item $s_3$ covering $\sigma_1''$ with degree $2$, and
        \item $s_4$ covering $\sigma_2'$ with degree $2$.
    \end{enumerate}
    It follows that
    \[\partial[\overline{S}]=(s_1+s_1')-(s_2+s_2')+(s_3-s_4).\] 
    To finish, glue the boundary components of $T'$ to the four boundary components $s_1,s_1',s_2$ and $s_2'$ of $\overline{S}$, matching orientations so that the resulting precover is an orientable surface $\Sigma$ with two boundary components $s_3,s_4$, each covering $\sigma$ (with degree $2$). \par \smallskip

    In both cases we obtain a connected and orientable surface $\Sigma$ with two boundary components covering $\sigma$ (both with degree either $1$ or $2$). By \Cref{lem:covers-of-surfaces}, up to taking a $2$-sheeted cover of $\Sigma$, we can assume that the two components of $\partial \Sigma$ both cover $\sigma$ with degree $2$. Repeating the same construction for $\theta$ and $\pi$, we obtain connected, orientable surfaces $\Theta$ and $\Pi$, each with two boundary components covering $\theta$ and $\pi$, respectively, with degree $2$.

    \item \textbf{Induced orientations on boundary.} To obtain a standard branched surface, we need to verify that condition $4$ of \Cref{def:branched-surface} is satisfied. Note that if the surface $S$ constructed in step $1$ is non-orientable, then 
    \[\partial[\Sigma]=\sigma_1-\sigma_2\]
    as desired. More generally, if one of the surfaces $S$, $T$ or $P$ produced in step $1$ is non-orientable, one can glue it to a two-sheeted cover of each of the others, and make sure that all three surfaces $S$, $T$ and $P$ are non-orientable. In this case, step $2$ will output three surfaces $\Sigma$, $\Theta$ and $\Pi$ with
    \[\partial[\Sigma]=\sigma_1-\sigma_2\;\;,\;\; \partial[\Theta]=\theta_1-\theta_2\;\;\text{and}\;\; \partial[\Pi]=\pi_1-\pi_2,\]
    which can be assembled into a standard branched surface. \par \smallskip

    It remains to consider the case where $S$, $T$ and $P$ from step $1$ are all orientable. Suppose first that $\partial[\Sigma]=\sigma_1-\sigma_2$ and that $\partial[\Theta]=\theta_1+\theta_2$. Take a $2$-sheeted cover $\Sigma'$ of $\Sigma$ with $\partial[\Sigma']=(\sigma_1'+\sigma_1'')-(\sigma_2'+\sigma_2'')$ and glue the two boundary components of $\Theta$ to $\sigma_1''$ and $\sigma_2''$ (with ``mismatched'' orientations). The resulting surface is non-orientable, which allows us to repeat the previous steps and obtain the conclusion.\par \smallskip

    Finally, suppose that $\partial[\Sigma]=\sigma_1+\sigma_2$ and $\partial[\Theta]=\theta_1+\theta_2$ and $\partial[\Pi]=\pi_1+\pi_2$. In this case, take another copy $\Sigma'$ of $\Sigma$, with $\partial[\Sigma']=\sigma_1'+\sigma_2'$. Glue $\sigma_2$ to $\theta_1$, then $\theta_2$ to $\pi_1$, and finally $\pi_2$ to $\sigma_1'$. The boundary of the fundamental class of the resulting surface is $\sigma_1-\sigma_2'$, which reduces the problem to the previous case.
\end{enumerate}
\end{proof}

\subsection{Prescribed homological torsion in covers of branched surfaces}

\begin{prop}
    \label{prop:branched-torsion}
    Let $\mathcal{B}$ be a standard branched surface. For every finite abelian group $M$, there is a finite cover $\mathcal{B}_M$ of $\mathcal{B}$ such that $H_1(B_M)\cong M \oplus \mathbb{Z}^r$ for some $r\in\mathbb{N}$.
\end{prop}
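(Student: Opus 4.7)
My plan is to construct $\mathcal{B}_M$ as an explicit finite cover of $\mathcal{B}$ by specifying the cover of each vertex space and how the boundary elevations of the three surface-type vertices $\Sigma,\Theta,\Pi$ are matched to the components of the covers of the cyclic vertices $C_1,C_2$. The key observation is that each of $\Sigma,\Theta,\Pi$ contributes the same relation $C_1-C_2=0$ to $H_1(\mathcal{B})$, so a cover produces three copies of essentially the same linear constraint in $H_1(\mathcal{B}_M)$; by arranging these three copies to disagree in a controlled way on the $\tilde C_1$-side one can engineer $n$-torsion relations.

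I would first settle the cyclic case $M=\mathbb{Z}/n\mathbb{Z}$. Take a degree-$(n+1)$ cover where $\tilde C_1$ has two components of degrees $1$ and $n$, $\tilde C_2$ has $n+1$ components of degree $1$, $\tilde\Sigma$ is a connected cover with $\sigma_1$-elevations of degrees $(1,n)$ and $\sigma_2$-elevations consisting of $n+1$ degree-$1$ copies, $\tilde\Theta$ is a connected cover whose $\theta_1$-elevations are $n+1$ degree-$1$ copies all attached to the degree-$1$ component of $\tilde C_1$, and $\tilde\Pi\cong\tilde\Sigma$; the existence of such covers of $\Sigma,\Theta$ follows from \Cref{lem:covers-of-surfaces} once the parity condition $b(\Sigma')\equiv d\cdot b(\Sigma)\pmod 2$ is checked. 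Writing $\tilde c_1^{(0)},\tilde c_1^{(1)}$ for the generators of the two components of $\tilde C_1$ and $\tilde c_2^{(j)}$ for those of $\tilde C_2$, the fundamental classes yield
\[
\partial[\tilde\Sigma]=\tilde c_1^{(0)}+n\tilde c_1^{(1)}-\textstyle\sum_j\tilde c_2^{(j)}=0,\qquad \partial[\tilde\Theta]=(n+1)\tilde c_1^{(0)}-\textstyle\sum_j\tilde c_2^{(j)}=0,
\]
and subtracting produces the torsion relation $n(\tilde c_1^{(1)}-\tilde c_1^{(0)})=0$; a change of basis then shows $H_1(\mathcal{B}_n)\cong\mathbb{Z}/n\mathbb{Z}\oplus\mathbb{Z}^r$.

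For general $M=\bigoplus_{i=1}^{k}\mathbb{Z}/n_i\mathbb{Z}$ the plan is to replicate this sub-pattern for each cyclic factor simultaneously. Introduce, for every $i$, a dedicated pair of $\tilde C_1$-components of degrees $1$ and $n_i$ together with a subset $J_i$ of $\tilde C_2$-components of size $n_i+1$, where the $J_i$'s partition the collection of $\tilde C_2$-components. The covers $\tilde\Sigma$ and $\tilde\Theta$ then decompose as disjoint unions $\bigsqcup_i\tilde\Sigma^{(i)}$ and $\bigsqcup_i\tilde\Theta^{(i)}$, with each pair $(\tilde\Sigma^{(i)},\tilde\Theta^{(i)})$ realising the $\mathbb{Z}/n_i$ pattern on its dedicated $\tilde C_1$-pair and on $J_i$, and $\tilde\Pi$ mirrors $\tilde\Sigma$. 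Because the torsion relations $n_iu_i=0$ for different $i$ involve pairwise disjoint generators, they combine without interference into the direct sum $M=\bigoplus_i\mathbb{Z}/n_i$, while the remaining relations only eliminate free generators, giving $H_1(\mathcal{B}_M)\cong M\oplus\mathbb{Z}^r$.

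The two subtleties I expect to be the main obstacles are parity and connectedness. The parity condition of \Cref{lem:covers-of-surfaces} fails whenever some $n_i$ is even; I would resolve this by replacing the degree profile $(1,n_i)$ with $(n_i,n_i)$ for such $i$ (so that $\tilde C_1$ contributes two components of degree $n_i$ instead of a degree-$1$ and a degree-$n_i$ component), a modification which restores even parity and still produces an $n_iu_i=0$ relation upon subtracting the modified $R_{\Sigma^{(i)}}$ from $R_{\Theta^{(i)}}$. Regarding connectedness, the $k$ disjoint sub-patterns a priori yield a disconnected cover; this I would remedy by inserting auxiliary ``linking'' surface components with all-degree-$1$ boundary profiles, so that their contributions to $H_1$ are pure linear $\partial$-equations with $\pm 1$ coefficients and therefore identify enough free generators to connect the blocks without introducing new torsion. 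The hardest step is the simultaneous bookkeeping needed to assemble all the blocks, the parity fixes, and the linking surfaces into one consistent finite cover of $\mathcal{B}$.
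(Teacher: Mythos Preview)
Your construction does not produce a cover of $\mathcal{B}$. In the standard branched surface the cyclic vertex $C_1$ has valence exactly $3$: it is attached to precisely one boundary component from each of $\Sigma$, $\Theta$, $\Pi$. Consequently, in any finite cover, each component of $\tilde C_1$ is attached to \emph{exactly one} elevation of $\theta_1$ (and one of $\sigma_1$, one of $\pi_1$), whose degree equals that of the component. If $\tilde C_1$ has two components of degrees $1$ and $n$, then the $\theta_1$-elevations in $\tilde\Theta$ form the multiset $\{1,n\}$ as well---you cannot have $n+1$ degree-$1$ elevations of $\theta_1$, let alone attach them all to the single degree-$1$ component. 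The relation $\partial[\tilde\Theta]=(n+1)\tilde c_1^{(0)}-\sum_j\tilde c_2^{(j)}$ is therefore not realisable by any cover.

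This is not a bookkeeping slip but a failure of the mechanism itself. Whenever $\tilde\Sigma$, $\tilde\Theta$, $\tilde\Pi$ are each connected, all three contribute the \emph{identical} boundary relation $\sum_i d_i\,\tilde c_1^{(i)}-\sum_j e_j\,\tilde c_2^{(j)}=0$ (the coefficients being forced by the degrees of the cyclic components), so subtracting yields nothing. Torsion can only arise when some surface covers are disconnected, with different pieces contributing genuinely different $\partial$-equations. The paper exploits this by taking many degree-$1$ copies $y_{i,j},z_{i,j}$ of the cyclic vertices and many surface pieces: a single wide piece $\Sigma_{i,0}$ with $r_i$ boundary components on each side gives $x_{i,1}+\dots+x_{i,r_i}=0$, and a chain of two-holed pieces from $\Theta$, $\Sigma$ and $\Pi$ then forces the identifications $x_{i,1}=\dots=x_{i,r_i}$, producing $r_ix_{i,1}=0$. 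Your ``one block per cyclic factor'' picture and the idea of using $\Theta_0,\Pi_0$ as fillers are in the right spirit, but the torsion has to come from the combinatorics of which degree-$1$ components are matched, not from higher-degree elevations.
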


\begin{rmk}
    \Cref{prop:branched-torsion} can be reformulated in terms of systems of $\partial$-equations: suppose that $\mathcal{B}$ is a standard branched surface, and let $\Xi$ be a finite system of linear equations with integral coefficients. Then there exists a system $\Psi$ of $\partial$-equations which is equivalent to a void extension of $\Xi$, and a finite-sheeted cover $\mathcal{B}_\Psi$ of $\mathcal{B}$ which is a $\Psi$-complex.
\end{rmk}

\begin{cor}
    Combining \Cref{lem:standard-pre-covers} and \Cref{cor:vr-direct-factor} we obtain the following: suppose that $\mathcal{B}$ is a branched surface, and let $M$ be a finite abelian group. Then $\mathcal{B}$ admits a finite-sheeted cover $\mathcal{B}_M$ such that $M$ is a direct factor of $H_1(B_M)$.
\end{cor}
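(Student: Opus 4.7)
Following the reformulation in the remark right after the proposition, I would first observe that finite covers of a standard branched surface $\mathcal{B}$ are encoded by $\Psi$-complexes of a restricted combinatorial form. Each cyclic vertex of $\mathcal{B}$ has valence $3$ (one edge per base surface $\Sigma, \Theta, \Pi$), so any elevation of a cyclic vertex in a cover must also have valence $3$, with exactly one edge going to a cover of each of the three base surfaces. Consequently, the equations of a $\Psi$-complex arising as a cover split into \emph{three partitions} of the variable vertex set (one per base surface), where each variable appears in exactly one equation of each partition; moreover, variables split into two disjoint classes (elevations of $C_1$ and of $C_2$).

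The core construction then is the following. Given a finite abelian group $M$ of order $N$, introduce $N^2$ degree-$1$ elevations $u_{(\alpha,\beta)}$ of $C_1$ and $N^2$ degree-$1$ elevations $v_{(\alpha,\beta)}$ of $C_2$, indexed by $(\alpha,\beta) \in M \times M$. Define the three partitions using the fibres of the three natural surjections $M \times M \to M$:
\[
\pi_1(\alpha,\beta) = \alpha, \qquad \pi_2(\alpha,\beta) = \beta, \qquad \pi_3(\alpha,\beta) = \alpha + \beta,
\]
assigning the $\pi_i$-fibres to be the blocks of the $i$-th partition. Each block comprises $N$ $u$'s and $N$ $v$'s, and by \Cref{lem:covers-of-surfaces} is realized as a connected orientable degree-$N$ cover of the respective base surface (the parity condition $2N \equiv N \cdot 2 \pmod 2$ is trivially satisfied). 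Assembling these surface covers with the chosen cyclic-vertex elevations yields a graph of spaces which is a finite cover $\mathcal{B}_M$ of $\mathcal{B}$ of degree $N^2$; connectedness follows from the row-column grid structure, since $u_{(\alpha,\beta)}$ and $u_{(\alpha',\beta')}$ are linked by alternating $\Sigma$-blocks (which connect $u_{(\alpha,\cdot)}$ to all $v_{(\alpha,\cdot)}$) and $\Theta$-blocks.

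It remains to compute $H_1(\mathcal{B}_M)$. Setting $x_\gamma := u_\gamma - v_\gamma$ decouples the $v$-generators (which contribute a free $\mathbb{Z}^{N^2}$ summand) and reduces the torsion analysis to the cokernel of the system of $3N$ relations on $\mathbb{Z}[M\times M]$,
\[
\sum_{\pi_i(\alpha,\beta) = g} x_{(\alpha,\beta)} = 0 \qquad \bigl(i \in \{1,2,3\},\ g \in M\bigr).
\]
A direct Smith normal form computation---which I have verified by hand for cyclic $M$ (e.g.\ $M = \mathbb{Z}/n\mathbb{Z}$ with any $n$, yielding elementary divisors $1,1,\ldots,1,n$) and for $M = (\mathbb{Z}/2)^2$ (yielding elementary divisors $1,2,2$)---shows that the torsion part of this cokernel is exactly $M$. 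Combined with the free $v$-part and the genus contributions from the surface covers, this gives $H_1(\mathcal{B}_M) \cong M \oplus \mathbb{Z}^r$, as required.

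The main obstacle I anticipate lies in the cokernel computation for a \emph{general} finite abelian $M$: one must confirm that the ``row + column + diagonal'' relation system on $\mathbb{Z}[M \times M]$ produces torsion exactly $M$ (rather than, say, $\mathbb{Z}/|M|\mathbb{Z}$ when $M$ is non-cyclic). A clean approach is via Fourier analysis on $\widehat{M \times M}$: the three families of relations correspond precisely to the three ``coordinate axes'' $(\chi,1)$, $(1,\psi)$, $(\chi,\chi)$ inside $\widehat{M}^2$, which pins down the rank over $\mathbb{Q}$ as $(|M|-1)(|M|-2)$; a more careful integral refinement---tracking how the group operation on $M$ enters through the diagonal partition---then yields the precise torsion structure. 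Connectivity, parity, and the existence of the individual surface covers are all straightforward once the partition data is fixed.
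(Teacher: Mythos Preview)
Your proposal does not prove the stated corollary; it attempts an alternative proof of Proposition~\ref{prop:branched-torsion} (the standard branched surface case) and omits the two ingredients that give the corollary its content. The corollary concerns an \emph{arbitrary} branched surface $\mathcal{B}$. The paper's argument is the two--line deduction announced in the statement: pass to a standard precover $\mathcal{B}'$ via Lemma~\ref{lem:standard-pre-covers}, apply Proposition~\ref{prop:branched-torsion} to obtain a finite cover of $\mathcal{B}'$ with the prescribed torsion, and then use the virtual retraction of Corollary~\ref{cor:vr-direct-factor} to promote this (generally infinite--index) subgroup of $\pi_1(\mathcal{B})$ to a direct summand in the abelianization of a finite--index subgroup. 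You never invoke either of these steps; in particular, since the standard precover $\mathcal{B}'\to\mathcal{B}$ is typically not a finite cover, without the virtual retraction you have no finite--sheeted cover of $\mathcal{B}$ at the end.

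Setting that aside, your $M\times M$ grid construction is a genuinely different approach to Proposition~\ref{prop:branched-torsion} than the paper's ``chain'' construction adapted to a cyclic decomposition $M\cong\bigoplus\mathbb{Z}/r_i\mathbb{Z}$. The paper's scheme has the advantage that the homology computation is entirely elementary and transparent: each $\mathbb{Z}/r_i\mathbb{Z}$ summand is produced by an explicit block of $\partial$-equations whose effect is read off directly (see Table~\ref{tab:torsion-summand}). Your construction is more symmetric and uniform, but its entire content resides in the claim that the cokernel of the row/column/diagonal relation system on $\mathbb{Z}[M\times M]$ has torsion exactly $M$, and you leave this claim unproved for general $M$. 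The Fourier argument you sketch gives only the $\mathbb{Q}$--rank; the ``more careful integral refinement'' you allude to is precisely the substance of the proposition, and verifying a handful of small cases does not discharge it. Until that Smith normal form computation is carried out in general, the proposal has a genuine gap even as a proof of Proposition~\ref{prop:branched-torsion}.
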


\begin{proof}[Proof of \Cref{prop:branched-torsion}]
    Write $M=\mathbb{Z}/r_1\mathbb{Z} \oplus \cdots \oplus \mathbb{Z}/r_n \mathbb{Z}$ with $r_i\ge 2$ for every $1\le i \le n$. Denote the three surface-type (equation) vertices of $\mathcal{B}$ by $\Sigma$, $\Theta$ and $\Pi$ and by $y$ and $z$ the two cyclic (variable) vertices of $\mathcal{B}$; we orient $y$ and $z$ so that
    \[\partial[\Sigma]=\partial[\Theta]=\partial[\Pi]=y-z.\]
    To simplify notation, we set $x=y-z$. Take the following covers of $\Sigma$, $\Theta$, $\Pi$, $y$ and $z$ (whose existence is guaranteed by \Cref{lem:covers-of-surfaces}):
    \begin{enumerate}
        \item Copies $y_{i,j}$ of $y$, for $i=1,...,n$ and $j=1,...,3r_i-2$.
        \item Copies $z_{i,j}$ of $z$, for $i=1,...,n$ and $j=1,...,3r_i-2$.
        \item Connected covers $\Sigma_{i,0}$ of $\Sigma$, for $i=1,...,n$, so that $\Sigma_{i,0}$ has $r_i$ boundary components covering $y$ with degree $1$, and $r_i$ boundary components covering $z$ with degree $1$.
        \item Connected covers $\Sigma_{i,j}$ of $\Sigma$, for $i=1,...,n$ and $j=1,...,r_i-1$, each with four boundary components: two degree-$1$ elevations of $y$, and two of $z$.
        \item Connected covers $\Theta_{i,j}$ of $\Theta$, for $i=1,...,n$ and $j=1,...,r_i-1$, each with four boundary components: two degree-$1$ elevations of $y$, and two of $z$.
        \item Connected covers $\Pi_{i,j}$ of $\Pi$, for $i=1,...,n$ and $j=1,...,r_i-1$, each with four boundary components: two degree-$1$ elevations of $y$, and two of $z$.
        \item Connected covers $\Theta_0$ and $\Pi_0$ of $\Theta$ and $\Pi$, respectively, which will be determined later on.
    \end{enumerate}

    Again, to simplify notation, we set $x_{i,j}=y_{i,j}-z_{i,j}$ for $i=1,\dots,n$ and $j=1,\dots,3r_i-2$. We glue the surfaces and the circles in the collection above as laid out in \Cref{tab:torsion-summand} (where $1$ means that two boundary components of the surface representing the column, are glued to the two circles corresponding to the line). As the construction is fairly elaborate, the reader may find it helpful to examine the illustrative example provided in \Cref{fig:example_torsion}.

\begin{table}[h!]
    \setlength{\arrayrulewidth}{0.2mm}  
    \setlength{\tabcolsep}{6pt}         
    \renewcommand{\arraystretch}{1.2}  
    \centering

    \begin{tabular}{|c || c | cc | cc | c | cc | cccc|}
        \hline
        & $\Sigma_{i,0}$ & $\Theta_{i,1}$ & $\Pi_{i,1}$ & $\Theta_{i,2}$ & $\Sigma_{i,2}$ & $\cdots$ & $\Theta_{i,r_i-1}$ & $\Sigma_{i,r_i-1}$ & $\Pi_{i,1}$ & $\Pi_{i,2}$ & $\cdots$ & $\Pi_{i,r_i-1}$ \\
        \hline\hline
        $x_{i,1}$      & 1 & 0 & 0 & 0 & 0 &$\cdots$& 0 & 0 & 1 & 0 &$\cdots$& 0 \\
        $x_{i,2}$      & 1 & 0 & 0 & 0 & 0 &$\cdots$& 0 & 0 & 0 & 1 &$\cdots$& 0 \\
        \vdots            &\vdots&\vdots&\vdots&\vdots&\vdots&\vdots&\vdots&\vdots&\vdots&\vdots&\vdots&\vdots\\
        $x_{i,r_i-1}$  & 1 & 0 & 0 & 0 & 0 &$\cdots$& 0 & 0 & 0 & 0 &$\cdots$& 1 \\
        $x_{i,r_i}$    & 1 & 1 & 0 & 0 & 0 &$\cdots$& 0 & 0 & 0 & 0 &$\cdots$& 0 \\
        \hline
        $x_{i,r_i+1}$  & 0 & 1 & 1 & 0 & 0 &$\cdots$& 0 & 0 & 1 & 0 &$\cdots$& 0 \\
        $x_{i,r_i+2}$  & 0 & 0 & 1 & 1 & 0 &$\cdots$& 0 & 0 & 0 & 0 &$\cdots$& 0 \\
        \hline
        $x_{i,r_i+3}$  & 0 & 0 & 0 & 1 & 1 &$\cdots$& 0 & 0 & 0 & 1 &$\cdots$& 0 \\
        $x_{i,r_i+4}$  & 0 & 0 & 0 & 0 & 1 &$\cdots$& 0 & 0 & 0 & 0 &$\cdots$& 0 \\
        \hline
        \vdots            &\vdots&\vdots&\vdots&\vdots&\vdots&\vdots&\vdots&\vdots&\vdots&\vdots&\vdots&\vdots\\
        \hline
        $x_{i,3r_i-3}$ & 0 & 0 & 0 & 0 & 0 &$\cdots$& 1 & 1 & 0 & 0 &$\cdots$& 1 \\
        $x_{i,3r_i-2}$ & 0 & 0 & 0 & 0 & 0 &$\cdots$& 0 & 1 & 0 & 0 &$\cdots$& 0 \\
        \hline
    \end{tabular}
    \caption{\emph{A ``gluing matrix'' for the cover $\mathcal{B}_M$ of $\mathcal{B}$}. The first column gives the $\partial$-equation $x_{i,1}+\dots+x_{i,r_i}$, so that, in order to an element of order $r_i$ in $\pi_1(B_M)^{\ab}$, we only need to identify $x_{i,1},\dots,x_{i,r_i}$ with one other. The subsequent blocks $\Theta_{i,1}$ and $\Sigma_{i,1}$ identify $x_{i,r_i},x_{i,r_i+2},x_{i,r_i+4},\dots$, creating several ``copies'' of $x_{i,r_i}$. The last columns $\Pi_{i,1},\dots,\Pi_{i,r_i-1}$ identify $x_{i,1},\dots,x_{i,r_i-1}$ with these ``copies'', producing the desired result.}
    \label{tab:torsion-summand}
\end{table}
    In more detail, for $i=1,\dots,n$, the surface $\Sigma_{i,0}$ the $r_i$ elevations of $y$ are glued to $y_{i,1},\ldots,y_{i,r_i}$, and the $r_i$ elevations of $z$ are glued to $z_{i,1},\ldots,z_{i,r_i}$, so that the fundamental class of $\Sigma_{i,0}$ has boundary
    \[
        \partial[\Sigma_{i,0}]=x_{i,1}+\dots+x_{i,r_i}.
    \]
    Similarly, for $i=1,\dots,n$ and $j=1,\dots,r_i-1$, we glue the surfaces $\Sigma_{i,j},\Theta_{i,j}$ and $\Pi_{i,j}$ so that they correspond to the following $\partial$-equations:
    \[
        \partial[\Sigma_{i,j}]=x_{i,r_i+2j-1}+x_{i,r_i+2j},
    \]
    \[
        \partial[\Theta_{i,j}]=x_{i,r_i+2j-2}+x_{i,r_i+2j-1}, \text{ and}
    \]
    \[
        \partial[\Pi_{i,j}]=x_{i,j}+x_{i,r_i+2j-1}.
    \]
    Finally, the covers $\Theta_0$ and $\Pi_0$ are constructed to have boundary covers that can be attached to all of the cyclic vertices that are not glued to any $\Theta_{i,j}$ and $\Pi_{i,j}$, respectively. \par \smallskip
    
    One can easily verify that the resulting space $B_M$ is a connected, finite-sheeted cover of $B$. In the level of the abelianization, the elements $y_{i,j},z_{i,j}$ (for $i=1,...,n$ and $j=1,...,3r_i-2$) generate a direct summand of $\pi_1(B_M)^{\ab}$, subject only to the $\partial$-equations coming from the equation vertices (i.e. the surface pieces). The rest $\pi_1(B_M)^{\ab}$ is a free abelian group. \par \smallskip
    
    It remains to verify that the aforementioned direct summand in $\pi_1(B_M)^{\ab}$ is isomorphic to $M$. For each $i\in\{1,...,n\}$, the surfaces $\Sigma_{i,j}$ and $\Theta_{i,j}$ tell us that
    \[
        x_{i,r_i}=x_{i,r_i+2}=\cdots=x_{i,3r_i-2}=-x_{i,r_i+1}=-x_{i,r_i+3}=\cdots=-x_{i,3r_i-3},
    \]
    while the surfaces $\Pi_{i,j}$ imply
    \[
        x_{i,1}=x_{i,2}=\cdots=x_{i,r_i}=x_{i,3r_i-2}.
    \]
    The surface $\Sigma_{i,0}$ yields
    \[
        r_i\cdot x_{i,1}=0.
    \]
    Finally, the surfaces $\Theta_0$ and $\Pi_0$ only add ``redundant'' constraints ($\Theta_0$ gives $\sum_{i=1}^n r_i \cdot x_{i,1} = 0$ and $\Pi_0$ gives $x_{i,r_i}+(r_i-1)\cdot x_{i,1}=0$). Thus $\pi_1(B_M)^{\ab}$ is isomorphic to the direct sum of $\bbZ/r_1\bbZ\oplus...\oplus\bbZ/r_n\bbZ$ and a free abelian group.
\end{proof}

\begin{figure}[h!]
    \centering
    \hspace*{-0.1\linewidth}
    \includegraphics[width=1.1\linewidth]{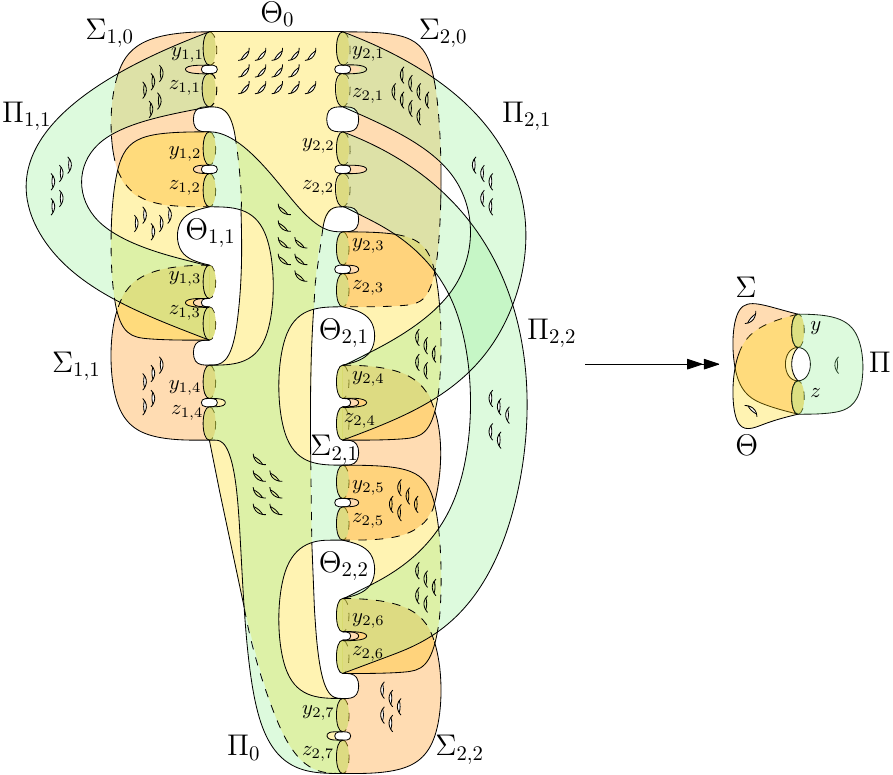}
    \caption{\emph{The cover $\mathcal{B}_{\mathbb{Z}/2\mathbb{Z} \oplus \mathbb{Z}/3\mathbb{Z}}$ of $\mathcal{B}$}. The standard branched surface $\mathcal{B}$ appears on the right, and its $11$-sheeted cover $\mathcal{B}_{\mathbb{Z}/2\mathbb{Z} \oplus \mathbb{Z}/3\mathbb{Z}}$ constructed in \Cref{prop:branched-torsion} appears on the left.}
    \label{fig:example_torsion}
\end{figure}

A corollary of \Cref{prop:branched-torsion} above is that many ``branching'' hyperbolic graphs of free groups with cyclic edge groups—those whose normal forms include a cyclic vertex of degree at least three—admit finite index subgroups with arbitrary torsion in their abelianization.

\begin{cor}
    \label{cor:general-branch}
    Let $G$ be a hyperbolic fundamental group of a graph of free groups with cyclic edge groups. Suppose that the normal form of one of the free factors in the Grushko decomposition of $G$ has a cyclic vertex of valence $\ge3$. Then, for every finite abelian group $M$, there exists a finite index subgroup $G_0\le G$ such that $M$ is a direct summand of $G_0^\ab$.
\end{cor}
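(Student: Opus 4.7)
The plan is to construct a finitely generated subgroup $K \le G$ whose abelianization contains $M$ as a direct summand, and then to invoke \Cref{cor:vr-direct-factor} to promote $M$ to a direct summand of the abelianization of a finite-index subgroup $G_0 \le G$. To produce such a $K$, I would first pass to the free factor $G_1$ of the Grushko decomposition of $G$ whose normal form (obtained via \Cref{prop:normalization}, possibly after passing to a finite-index subgroup $G_1' \le G_1$) contains a cyclic vertex $v$ of valence $\ge 3$. It then suffices to build a branched surface $\mathcal{B}$ whose fundamental group embeds in $G_1'$; from $\mathcal{B}$, \Cref{lem:standard-pre-covers} and \Cref{prop:branched-torsion} will produce the desired $K$.

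To build $\mathcal{B}$, I would pick three non-cyclic vertices $u_1, u_2, u_3$ adjacent to $v$ in the normal form of $G_1'$, and for each one extract a surface piece $S_i$ covering the incident edge groups of $u_i$. If the induced pair at $u_i$ is flexible, then by \Cref{prop:normalization} it is already of surface type, so one may take $S_i = G_{u_i}$. If it is rigid, I would apply \Cref{thm:Wilton-surfaces} (which is available because rigid pairs in the normal form are malnormal and strongly one-ended) to obtain an admissible weak-surface-type subpair of $(G_{u_i}, [\underline{w}_i])$, and use \Cref{lem:covers-of-surfaces} and \Cref{lem:pos-gen} to pass to an orientable finite cover of positive genus whose boundary components cover all incident edge groups --- in particular the one at $v$. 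Gluing the pieces $S_1, S_2, S_3$ at $v$ requires the degrees of boundary elevations landing at $v$ to match across the edges; this should be arranged by combining \Cref{thm:omnipotence} (to control degrees on the rigid side) with the near-complete freedom afforded by \Cref{lem:covers-of-surfaces} (on the surface side), after taking further covers and multiple copies as needed. Leftover hanging boundary elevations at other cyclic vertices are resolved by iterating this surface-extraction procedure at their adjacent non-cyclic vertices. The resulting geometric realization $B$ is one-ended (three surface pieces meet at $v$, so Shenitzer's lemma applies), hyperbolic (as a quasiconvex subgroup of the hyperbolic group $G$), and has $v$ as a cyclic vertex of valence $\ge 3$ in its normal form, so $\pi_1(B)$ is not a surface group and $\mathcal{B}$ is a branched surface in the sense of \Cref{def:branched-surface}.

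The remaining steps are bookkeeping. \Cref{lem:standard-pre-covers} produces a standard branched surface precover $\mathcal{B}_{\mathrm{std}}$ of $\mathcal{B}$, so that $\pi_1(\mathcal{B}_{\mathrm{std}}) \le \pi_1(\mathcal{B}) \le G_1' \le G$. \Cref{prop:branched-torsion} then yields a finite cover $\mathcal{B}_M$ of $\mathcal{B}_{\mathrm{std}}$ with $H_1(\mathcal{B}_M) \cong M \oplus \mathbb{Z}^r$ for some $r \ge 0$. Setting $K = \pi_1(\mathcal{B}_M) \le G$, which is finitely generated, we have $K^{\ab} \cong M \oplus \mathbb{Z}^r$, so $M$ is a direct summand of $K^{\ab}$. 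Applying \Cref{cor:vr-direct-factor} to $G$ then furnishes a finite-index subgroup $G_0 \le G$ such that $K^{\ab}$ is a direct summand of $G_0^{\ab}$, and hence $M$ is as well.

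The hardest step will be the gluing construction of $\mathcal{B}$: ensuring that the boundary degrees at $v$ match across the surface pieces obtained from Wilton's theorem, verifying that the resulting precover is genuinely branched rather than collapsing to a surface, and handling the hanging elevations at the other cyclic vertices of the normal form. Omnipotence of free groups combined with the near-total freedom over surface covers provides enough flexibility to make these matching problems solvable, but the combinatorial arrangement of covers and copies has to be organized carefully.
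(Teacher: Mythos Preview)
Your proposal is correct and follows essentially the same route as the paper: pass to a one-ended free factor in normal form, replace each rigid vertex pair by an admissible weak-surface-type subpair via \Cref{thm:Wilton-surfaces}, glue the resulting surface pieces into a branched surface precover at the cyclic vertex of valence $\ge 3$, and then apply \Cref{lem:standard-pre-covers}, \Cref{prop:branched-torsion}, and \Cref{cor:vr-direct-factor}. The only cosmetic difference is that the paper carries out the surface-replacement and degree-matching globally (uniformizing all pieces simultaneously via admissibility and \Cref{lem:covers-of-surfaces}) rather than starting at $v$ and iterating outward, which sidesteps the bookkeeping of chasing hanging elevations around the graph that you flag as the hardest step.
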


\begin{proof}
    We restrict to a free factor $H$ in the Grushko decomposition of $G$ whose normal form contains a cyclic vertex of degree at least $3$; passing to a finite-index subgroup we can assume that $H$ is in normal form. Fix a graph of groups splitting $\mathcal{H}$ of $H$ satisfying the properties laid out in \Cref{prop:normalization}.\par \smallskip

    Let $H_v$ be a rigid vertex of $\mathcal{H}$. By Wilton's \Cref{thm:Wilton-surfaces}, we can substitute the induced pair $(H_v,[\underline{w}_v])$ at $H_v$ with a pair $(H'_v,[\underline{u}_v])$ of weak surface type. Moreover, the admissibility condition implies that the total degree of the elevations of $[w_i]\in [\underline{w}_v]$ in $[\underline{u}_v]$ is the same across all $i$. We do so for every rigid vertex. By \Cref{lem:covers-of-surfaces}, up to taking further finite-sheeted covers of these surfaces (and of the surface-type vertices of $\mathcal{H}$), we can assume that every non-cyclic vertex of $\mathcal{H}$ is of weak surface type, and admits the same number of elevations of the same degree over each conjugacy class in the original induced peripheral structure at every vertex of $\mathcal{H}$. We do so ensuring that the number of elevations, as well as their degree, does not depend on the chosen non-cyclic vertex. \par \smallskip 

    We next attach all of these surfaces (to appropriate covers of the cyclic vertices of $\mathcal{H}$) to obtain a precover $\mathcal{H}'$ of $\mathcal{H}$. The existence of a vertex of valence $\ge3$ in the underlying graph of $\mathcal{H}$ implies that we can glue these to obtain a (possibly disconnected) branched surface; we pass to a connected component $\mathcal{B}$ whose branching locus is non-empty. Using \Cref{prop:artificial-branching} we obtain a precover $\mathcal{B}_M$ of $\mathcal{B}$ such that $\pi_1(\mathcal{B}_M)\le G$ is finitely generated and has $M$ as a direct summand in its abelianization. \Cref{cor:vr-direct-factor} yields a finite-index subgroup $G_0\le G$ such that $M$ is a direct summand of $G_0^{\ab}$.
\end{proof}

\section{Artificial branching and the proof of \texorpdfstring{\Cref{mainthm}}{Theorem A}}
\label{sec:main}

We now turn to the proof of \Cref{mainthm}, which we restate below in a more explicit form:

\begin{thm}\label{thm:main}
    Let $G$ be a hyperbolic group that splits as a graph of free groups with cyclic edge groups, and that is not isomorphic to a free product of free and surface groups. Let $M$ be a finite abelian group. Then there exist
    \begin{enumerate}
        \item a finite-index subgroup $G_0\le G$,
        \item a connected branched surface $\mathcal{B}_M$, with $H_1(B_M)=M\oplus \mathbb{Z}^r$, and
        \item a homomorphism $f:\pi_1(B_M)\rightarrow G_0$,        
    \end{enumerate}
    such that $f_*:\pi_1(B_M)^{\ab}\rightarrow G_0^{\ab}$ maps $M$ isomorphically to a direct summand of $G_0^{\ab}$.
\end{thm}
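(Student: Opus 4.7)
The plan is to pass to a Grushko factor $H$ of $G$ which is neither free nor a surface group -- such a factor must exist by the hypothesis on $G$ -- and then invoke \Cref{prop:normalization} to replace $H$ by a finite-index subgroup with a normal-form JSJ decomposition $\mathcal{G}$. I would then argue by cases on the underlying graph $\Gamma$ of $\mathcal{G}$. If $\Gamma$ contains a cyclic vertex of valence $\ge 3$, the result follows directly from \Cref{cor:general-branch}: an honest branched surface built from weak-surface-type pairs (via \Cref{thm:Wilton-surfaces}) immerses into $\mathcal{G}$, \Cref{prop:branched-torsion} encodes the prescribed torsion $M$ in $H_1(B_M)$, and \Cref{cor:vr-direct-factor} promotes $M$ to a direct summand of the abelianization of a finite-index subgroup of $G$. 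This is the \emph{genuine branching} case.

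The remaining case is when every cyclic vertex of $\Gamma$ has valence exactly $2$. Item $(7)$ of \Cref{prop:normalization} then forces each cyclic vertex to be adjacent to at least one rigid vertex, and since $H$ is neither free nor a surface group, a rigid vertex $G_v$ must appear in $\mathcal{G}$; the induced pair $(G_v,[\underline{w}])$ is strongly one-ended, malnormal, and each $[w_i]\in[\underline{w}]$ is primitive. The idea is to manufacture \emph{artificial branching} inside $G_v$. By strong one-endedness, for every $[w_i]\in[\underline{w}]$ the pair $(G_v,[\underline{w}]\setminus\{[w_i]\})$ remains malnormal and one-ended, so \Cref{thm:Wilton-surfaces} yields an admissible weak-surface-type subpair of $(G_v,[\underline{w}])$ whose boundary avoids $[w_i]$. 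Applying this with different ``missing'' classes produces several distinct surface-type subpairs of $(G_v,[\underline{w}])$ that share boundary circles (covering the edge groups adjacent to $G_v$) -- these shared boundaries play the role of the missing valence-$\ge 3$ cyclic vertex. To realize the specific torsion group $M$, I would additionally invoke Calegari's \Cref{cor:Calegari}: for each linear $\partial$-relation needed in the algorithm underlying \Cref{prop:branched-torsion}, Calegari's theorem produces an \emph{extremal} surface-type subpair of $(G_v,[\underline{w}])$ realizing that relation exactly in $G_v^{\ab}$.

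To assemble everything globally, I would use \Cref{prop:two-surfaces-in-one-subgroup} to embed all of the surface-type subpairs produced at $G_v$ into a common finite-index pull-back pair, where malnormality guarantees that their boundary data interacts predictably. Compatible surface pieces at the remaining non-cyclic vertices of $\mathcal{G}$ would be produced vertex-wise by \Cref{thm:Wilton-surfaces}, and the whole configuration would be glued along suitable finite covers of the edge groups to produce a connected branched surface $\mathcal{B}_M$, together with a homomorphism $f:\pi_1(B_M)\to G_0$ to a finite-index subgroup $G_0\le G$. Engineered along the lines of \Cref{prop:branched-torsion}, this construction would yield $H_1(B_M)\cong M\oplus \mathbb{Z}^r$.

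The hard part -- the ``subtle but crucial'' point flagged in the \nameref{intro} -- is verifying that $f_*$ is injective on the torsion summand $M$. The many overlapping surfaces at $G_v$ contribute simultaneous $\partial$-relations in the abelianization that could \emph{a priori} conspire to kill the torsion. This is precisely where Calegari's extremality is essential: the extremal surfaces realize exactly the intended linear relations in $G_v^{\ab}$, with no spurious cancellations, so the torsion in $H_1(B_M)$ survives under $f_*$. A final application of \Cref{cor:vr-direct-factor} then upgrades the injected copy of $M$ to a direct summand of the abelianization of a finite-index subgroup of $G$, completing the proof.
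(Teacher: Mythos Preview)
Your high-level outline matches the paper's: reduce to a one-ended Grushko factor in normal form, dispatch the valence-$\ge 3$ case via \Cref{cor:general-branch}, and in the remaining case exploit a rigid vertex to build artificial branching from overlapping surface subpairs. But the mechanism you describe for controlling the abelianization is not the one that actually works, and the gap is exactly at the point you flag as ``subtle but crucial.''

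You propose to use Calegari \emph{constructively}: for each $\partial$-relation in the recipe of \Cref{prop:branched-torsion}, produce an extremal surface in $(G_v,[\underline{w}])$ realizing that relation in $G_v^{\ab}$. This does not make sense as stated, because those $\partial$-relations (e.g.\ $x_{i,1}+\dots+x_{i,r_i}=0$) live among \emph{distinct copies} of the branching variable in the assembled branched surface, not in $G_v^{\ab}$; there is nothing for \Cref{cor:Calegari} to act on. More importantly, ``extremality'' in Calegari's sense (the surface realizes a multiple of a given $1$-boundary) does not by itself rule out \emph{additional} relations among the shared boundary components of your overlapping surfaces --- and such a relation is precisely what would kill the torsion under $f_*$.

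The paper's fix is a \emph{minimality} argument you have not identified (this is \Cref{prop:artificial-branching}, item~\ref{itm:minimality}, together with \Cref{lem:key-property}). One chooses the first surface $(H_1,[\underline{u}_1])$ so that its peripheral closure $\pcl{[\underline{w}]}{[\underline{u}_1]}$ is inclusion-minimal among all surface-type subpairs, and then $(H_2,[\underline{u}_2])$ minimal subject to containing $[w_1]$ but avoiding some $[w_2]\in\pcl{[\underline{w}]}{[\underline{u}_1]}$. Calegari is then used \emph{by contradiction}: if the shared boundary classes $s_1,\dots,s_r\in[\underline{u}_1]\cap[\underline{u}_2]$ satisfied a nontrivial relation in $H^{\ab}$, \Cref{cor:Calegari} would yield a surface with strictly smaller peripheral closure, violating minimality. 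This is what guarantees that the designated element $\tilde{x}$ generates an infinite cyclic direct summand in the abelianization of the artificial branching block, so that the computation from \Cref{prop:branched-torsion} goes through verbatim. You also omit the auxiliary ``capping'' surfaces of \Cref{prop:surfaces-prescribed-peripherals}, which are needed both to close off the shared $s_i$'s in the branching block and to build the analogues of $\Sigma,\Theta,\Pi$; without them there is no way to isolate $\tilde{x}$ from the rest of the peripheral structure.
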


\begin{rmk}
    Taking a more geometric approach, standard arguments yield that \Cref{thm:main} implies the following: there exists a graph of spaces decomposition $\mathcal{X}$ of $G$, a finite-sheeted covering $\mathcal{X}'\twoheadrightarrow \mathcal{X}$ and a map of $2$-complexes $f:B_M\rightarrow X'$ such that $f_*:H_1(B_M)\rightarrow H_1(X')$ maps $M$ isomorphically to a direct summand of $H_1(X')$.
\end{rmk}

\Cref{virtualcor} follows immediately from \Cref{thm:main} above:

\begin{cor}\label{cor:virtual}
    \Cref{thm:main} also applies to hyperbolic graphs of virtually free groups with $2$-ended edge groups (that are not virtually a free product of free and surface groups).    
\end{cor}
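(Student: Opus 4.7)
The plan is to deduce \Cref{cor:virtual} from \Cref{thm:main} by passing to a suitable finite-index subgroup. Specifically, I would show that any hyperbolic group $G$ which splits as a graph of virtually free groups with $2$-ended edge groups admits a finite-index subgroup $G_1$ that splits as a graph of free groups with cyclic edge groups. Since being a free product of free and surface groups is closed under taking finite-index subgroups (by the Kurosh subgroup theorem together with the fact that finite-index subgroups of closed surface groups are themselves free or surface groups), the hypothesis that $G$ is not virtually a free product of free and surface groups transfers to $G_1$: namely $G_1$ is not isomorphic to such a free product. One can then apply \Cref{thm:main} to $G_1$: for the given finite abelian group $M$, this produces a finite-index subgroup $G_0 \le G_1$, a connected branched surface $\mathcal{B}_M$ with $H_1(B_M) \cong M \oplus \mathbb{Z}^r$, and a homomorphism $f:\pi_1(B_M) \to G_0$ such that $f_*$ maps $M$ isomorphically onto a direct summand of $G_0^{\ab}$. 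Since $[G:G_0] = [G:G_1]\cdot[G_1:G_0] < \infty$, the conclusion for $G$ follows.

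The reduction step is where the real content lies and proceeds as follows. By Bass-Serre theory, finite subgroups of $G$ fix a vertex of the Bass-Serre tree and hence are conjugate into vertex groups. Since each vertex group is virtually free—and therefore has only finitely many conjugacy classes of finite subgroups—and the underlying graph is finite, $G$ has only finitely many conjugacy classes of finite subgroups. Assuming $G$ is residually finite, one can select finite-index normal subgroups avoiding each non-trivial finite subgroup in turn and intersect them to obtain a finite-index normal, torsion-free subgroup $G_1 \triangleleft G$. The induced action of $G_1$ on the Bass-Serre tree of $\mathcal{G}$ yields a splitting in which each vertex group is a torsion-free finite-index subgroup of a virtually free group (hence free) and each edge group is a torsion-free finite-index subgroup of a $2$-ended group (hence infinite cyclic); hyperbolicity is inherited from $G$.

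The main obstacle is therefore the residual finiteness of $G$ (equivalently, the existence of a finite-index torsion-free subgroup). I expect to handle this either by extending the Hsu-Wise cubulation theorem \cite{wise:cube} and the Agol-Wise specialness machinery \cite{agol} to the setting of virtually free vertex groups and $2$-ended edge groups, or alternatively by a purely combinatorial argument: each virtually free vertex group decomposes, via Karrass-Pietrowski-Solitar-Dunwoody, as the fundamental group of a finite graph of finite groups. Refining $\mathcal{G}$ accordingly expresses $G$ as the fundamental group of a finite graph of finite groups with finite or $2$-ended edge groups, and residual finiteness for such groups is classical. Once this technical point is in place, the remainder of the proof is a short bookkeeping exercise invoking \Cref{thm:main}.
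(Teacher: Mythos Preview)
Your approach is essentially identical to the paper's: pass to a torsion-free finite-index subgroup, observe it splits as a graph of free groups with cyclic edges, check it is not a free product of free and surface groups, and invoke \Cref{thm:main}. The paper handles your ``main obstacle'' (residual finiteness) in one stroke by citing \cite[Theorem 5.1]{wise:graph-sep}, which already establishes residual finiteness for hyperbolic graphs of virtually free groups with virtually cyclic edge groups; your proposed alternatives (extending Hsu--Wise/Agol, or refining via Karrass--Pietrowski--Solitar) are therefore unnecessary, and the second of these would in any case require more care, since $2$-ended edge groups are not elliptic in the graph-of-finite-groups decomposition of a virtually free vertex group.
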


\begin{proof}
Indeed, if $G$ is a hyperbolic group that splits as a graph of virtually free groups with virtually cyclic edge groups, then by \cite[Theorem 5.1]{wise:graph-sep} $G$ is residually finite, and so virtually torsion free. Therefore, there is a finite-index subgroup $G_0 \le G$ that splits as a graph of free groups with cyclic edges. Since $G$ is not virtually a free product of free and surface groups, $G$ is not isomorphic to a free product of free and surface groups, and \Cref{mainthm} applied to $G_0$ gives the desired result.
\end{proof}

Before diving into the proof of \Cref{thm:main}, we make a few preliminary observations. First, by \Cref{cor:vr-direct-factor}, it is enough to find any subgroup of $G$ with the desired torsion in the abelianization, and not necessarily a finite-index one. In addition, since the statement of \Cref{thm:main} is a virtual one, we are always allowed to replace $G$ by a finite-index subgroup.

\begin{stassumption} \label{assumption}
    In what follows, we will assume the following:
    \begin{enumerate}
        \item $G$ \textbf{is one-ended, and not virtually a surface group}. This is done by passing to a factor of the Grushko decomposition of $G$, and noting that by Kerckhoff's Nielsen realization theorem, a torsion-free group which contains a finite-index surface subgroup is itself a surface group \cite{Kerckhoff1983}.
        \item Replacing $G$ by a finite-index subgroup, we assume that $G$ \textbf{is in normal form, satisfying the properties described in \Cref{prop:normalization}}. In particular, the graph of groups $\mathcal{G}$ must contain two non-cyclic vertices.
        \item \textbf{Every cyclic vertex of} $\mathcal{G}$ \textbf{has valence} $2$, and $\mathcal{G}$ \textbf{contains a rigid vertex}. \Cref{cor:general-branch} shows that \Cref{thm:main} holds true whenever $\mathcal{G}$ has a cyclic vertex of valence $\ge 3$, and 1 above implies that if every cyclic vertex of $\mathcal{G}$ has valence $2$ then at least one non-cyclic vertex of $\mathcal{G}$ is rigid (and, since $\mathcal{G}$ is in normal form, this rigid vertex is not a pair of pants).
    \end{enumerate}
\end{stassumption}

\subsection{Strategy outline} We now give an overview of the main ideas underpinning the proof of \Cref{thm:main}. As seen in \Cref{cor:general-branch}, if $\mathcal{G}$ contains a cyclic vertex of valence at least $3$, then one can construct a branched surface inside the group $G$, and use it to produce a subgroup with any prescribed torsion in its abelianization. However, in general, $\mathcal{G}$ may not contain a cyclic vertex of valence $\ge 3$. The bulk of the proof is therefore devoted to replacing this missing feature. Our strategy is to construct a piece, which we call an \emph{artificial branching block}, that replaces such a cyclic vertex, and mimics the branching behaviour of a branched surface. This is carried out in \Cref{prop:artificial-branching} through an extremely delicate construction.

The idea is to work inside a rigid vertex group $G_v$ (where the induced pair $(G_v,[\underline{w}])$ at $G_v$ is strongly one-ended). We choose a conjugacy class $[w_i] \in [\underline{w}]$, remove it from the peripheral structure $[\underline{w}]$, and invoke Wilton’s theorem (\Cref{thm:Wilton-surfaces}) to find a surface subgroup whose boundary avoids elevations of $[w_i]$. Ignoring two different conjugacy classes in $[\underline{w}]$ one at a time, we obtain two surfaces $\Sigma_1$ and $\Sigma_2$ with a common boundary component $\sigma$; the union $\Sigma_1 \cup \Sigma_2$ (accompanied by additional precovers of $\mathcal{G}$), will play the role of a branched surface. \par \smallskip

Nonetheless, naively joining $\Sigma_1$ and $\Sigma_2$ may cause complications in homology, and controlling the image of $\sigma$ in the abelianization is crucial for producing the desired torsion. In particular, attaching surfaces to cap off boundary components of $\Sigma_1$ and $\Sigma_2$ that get in the way of our construction, may inadvertently kill $\sigma$ in the abelianization. To deal with this, we rely on Calegari’s Rationality Theorem (see \Cref{cor:Calegari}), which produces a surface $\Sigma$ realizing any given linear combinations of conjugacy classes which sum to $0$ in the abelianization.  A minimality argument applied to $\pcl{[\underline{w}]}{\partial \Sigma}$ rids us of such unwanted equations (\cref{itm:minimality} of \Cref{prop:artificial-branching} and \Cref{lem:key-property}), and allows us to prove that $\sigma$ survives as an infinite-order element in the abelianization of a suitable artificial branching block. \par \smallskip

Finally, we combine copies the artificial branching block with surface pieces obtained via \Cref{prop:surfaces-prescribed-peripherals}, allowing us to emulate the same torsion-producing behaviour seen in the branched surface case (the system of $\partial$-equations laid out in \Cref{tab:torsion-summand}). This completes the proof of \Cref{thm:main}.

\subsection{Constructing surfaces to cap off hanging elevations}

We begin by constructing surfaces with prescribed boundary, which serve as auxiliary pieces in our construction and allow us to cap off hanging elevations. The underlying arguments are standard, but the proof is technically involved.

\begin{prop}[Surfaces with prescribed boundary]\label{prop:surfaces-prescribed-peripherals}
    Let $G_v$ be a non-cyclic vertex group of $\mathcal{G}$ and let $[t]$ be a conjugacy class in the peripheral structure induced by $\mathcal{G}$ at $G_v$. Then there exist $\epsilon\in\{-1,1\}$ and a pre-cover $f:\mathcal{H}\rightarrow\mathcal{G}$ giving rise to a subgroup $H\le G$ with the following properties:
    \begin{enumerate}
        \item \label{itm:surf-1} There are vertex groups $H_{v_1},H_{v_2}$ of $\mathcal{H}$ that are mapped to $G_v$ under $f$, and elevations $[t_1],[t_2]$ of $[t]$ in the respective peripheral structures induced by $\mathcal{H}$ at $H_{v_1}$ and $H_{v_2}$, such that $[t_1]$ and $[t_2]$ are hanging elevations in $\mathcal{H}$.
        \item $(H,\{[t_1],[t_2]\})$ is a surface-type pair (i.e. a connected orientable surface of genus $\ge1$ with exactly two boundary components $[t_1]$ and $[t_2]$).
        \item The degrees of of the $[t_1]$ and $[t_2]$ to $H$ are both equal to $d>0$. The boundary of the fundamental class of the surface $(H,\{[t_1],[t_2]\})$ is $\partial [H]=[t_1]+\epsilon\cdot[t_2]$.
    \end{enumerate}
\end{prop}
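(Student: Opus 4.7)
The strategy is to build $\mathcal{H}$ so that, after discarding its hanging cylinders (which do not contribute to $\pi_1$ since each such cylinder deformation retracts onto the vertex space it attaches to), the geometric realization of $\mathcal{H}$ has the homotopy type of a compact, connected orientable surface of genus $\ge 1$ with two boundary components, both of which are elevations of $[t]$ of common degree $d > 0$. I would split into the flexible and rigid cases for the vertex $v$.

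\textbf{Flexible case.} By item (6) of \Cref{prop:normalization}, $G_v = \pi_1(\Sigma_v)$ for an orientable surface $\Sigma_v$ of genus $\ge 1$ with $[\underline{w}] = \partial\Sigma_v$. Taking $X_v = \Sigma_v$, I would perform a partial double: take two copies $\Sigma_v^{(1)}, \Sigma_v^{(2)}$ and identify the boundary components corresponding to each $[w_i]$ with $i\ne 1$ via the adjacent cyclic vertex $c_i$ of $\mathcal{G}$. When $c_i$ connects $v$ to a distinct non-cyclic vertex $u_i$, the gluing propagates into $u_i$, and I would iterate (doubling if $u_i$ is flexible, or applying the rigid construction below), with the finiteness of $\Gamma$ guaranteeing termination. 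An Euler characteristic computation yields $g' = 2g(\Sigma_v) + b(\Sigma_v) - 2 \ge 1$, using that $\Sigma_v$ is neither a disk, an annulus, nor a pair of pants (see \Cref{def:flex}). Orientability is arranged by the choice of gluing orientations, giving $\epsilon = -1$; a mismatched gluing followed by passing to the orientable double cover yields $\epsilon = +1$.

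\textbf{Rigid case.} By \Cref{thm:rigid-strongly-one-ended}, after a finite-index pull-back the pair $(G_v, [\underline{w}])$ is strongly one-ended and (by the normal form) malnormal, so \Cref{thm:Wilton-surfaces} produces an admissible subpair $(S, [\underline{u}]) \le (G_v, [\underline{w}])$ of weak surface type whose boundary elevates every $[w_i]$ with a common total degree $d'$. I would realize two copies of $S$ as pieces of the precover at $v$, match their non-$[t]$ boundaries in pairs via cyclic vertices in $\mathcal{H}$, and cap off any extra pairs of $[t]$-boundaries with annular cylinders (each such capping preserves $\chi$, decreases the boundary count by $2$, and increases the genus by $1$). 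The resulting precover has $\pi_1$ equal to that of a doubled-and-capped surface with exactly two $[t]$-boundaries, and \Cref{thm:omnipotence} (applicable because $G_v$ is free and $[\underline{w}]$ is malnormal) combined with \Cref{lem:covers-of-surfaces} lets us coordinate the degrees of every matched elevation, producing the common degree $d$ for $[t_1]$ and $[t_2]$. The identity $\partial[H] = [t_1] + \epsilon[t_2]$ then follows from the surface structure and the orientation analysis of \Cref{subsec:surfaces} (including \Cref{lem:boundary-non-orient} to control the sign when the intermediate surface is non-orientable).

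The main obstacle is the rigid case: the vertex spaces at $v$ are (covers of) the graph $X_v$, so the precover's realization is not literally a surface, and it is only its $\pi_1$-core which must be homotopy equivalent to one. Realizing the Wilton surface $S$ inside vertex spaces of $\mathcal{H}$ in a way that yields this surface core, while simultaneously propagating the construction through adjacent non-cyclic vertices of $\mathcal{G}$ and matching degrees consistently at every cyclic vertex, is the technical heart. Omnipotence is the key tool: it permits simultaneous prescription of elevation degrees of independent conjugacy classes in the free vertex groups, allowing the many gluings to close up with a common degree $d$.
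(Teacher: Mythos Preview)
Your high-level plan is right, but the two crucial gluing steps do not go through under the Standing Assumption that every cyclic vertex has valence~$2$. In a precover, an elevation attached to a cyclic vertex via an edge $e$ can only be matched with an elevation attached via a \emph{different} edge; when $c$ has valence~$2$, the two $[t]$-elevations you want to ``cap off with an annular cylinder'' both sit on the $v$-side of $c$, so no annulus in $\mathcal{H}$ can join them. The same objection breaks the flexible ``partial double'': the two copies of the $[w_i]$-boundary of $\Sigma_v$ lie on the same side of $c_i$ and cannot be glued to each other through $c_i$; you must supply content on the \emph{other} side of every cyclic vertex, and the sentence ``the gluing propagates into $u_i$'' hides the entire difficulty. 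Your rigid paragraph likewise never produces anything over the vertices $u\ne v$, so the ``match their non-$[t]$ boundaries in pairs via cyclic vertices'' step has nothing to match them to.

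The idea you are missing is the one for which strong one-endedness is actually used. You invoke \Cref{thm:rigid-strongly-one-ended} but then only apply \Cref{thm:Wilton-surfaces} to the full pair $(G_v,[\underline{w}])$, which gives an admissible surface hitting \emph{every} $[w_i]$ equally --- and that is exactly what prevents you from isolating two $[t]$-elevations. The paper instead uses strong one-endedness to produce, for each $i$, a Wilton surface $\Sigma_v^i$ for the pair $(G_v,[\underline{w}]\setminus\{[w_i]\})$, i.e.\ a surface that \emph{avoids} $[w_i]$ entirely. Combining (i) a specific finite cover of the full Wilton surface $\Sigma_v$ engineered to have exactly two high-degree elevations of $[t]$, (ii) the surfaces $\Sigma_v^i$ for $i\ne 1$ to rebalance the counts of the other peripherals, and (iii) analogous surface pieces over \emph{every} other non-cyclic vertex, one obtains a collection in which the elevation counts at each cyclic vertex satisfy the parity-plus-triangle conditions needed for a two-by-two matching, leaving precisely the two desired $[t]$-elevations hanging. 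The flexible case for $v$ is then reduced to the rigid case at the adjacent vertex (which is rigid by item~\ref{itm:last} of \Cref{prop:normalization}).
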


Before proving \Cref{prop:surfaces-prescribed-peripherals}, we make a few remarks.

\begin{rmk}
    In \cref{itm:surf-1}, we allow for $v_1$ and $v_2$ to coincide; however, we always require that $[t_1],[t_2]$ are distinct.
\end{rmk}

\begin{rmk}
    Up to taking further finite-sheeted covers of the surfaces produced in \Cref{prop:surfaces-prescribed-peripherals}, we may assume that the degree $d$ does not depend on the chosen non-cyclic vertex $G_v$ or on the chosen conjugacy class $[t]$.
\end{rmk}

\begin{rmk}\label{rmk:propagating-non-orientability}
    The value of $\epsilon=\pm1$ can depend on the choice of the non-cyclic vertex $G_v$ and of the conjugacy class $[t]$ in the peripheral structure induced by $\mathcal{G}$ at $G_v$. However, the choice $\epsilon=-1$ can be propagated along the graph of groups $\mathcal{G}$ (an explanation appears after the proof of \Cref{prop:surfaces-prescribed-peripherals}):
    \begin{enumerate}
        \item If $[t]$ and $[t']$ are two conjugacy classes that are attached to the same cyclic vertex of $\mathcal{G}$, and the choice $\epsilon=-1$ is possible for $[t]$, then it is also possible for $[t']$.
        \item Suppose that that $G_v$ is of surface-type. If the choice $\epsilon = -1$ is possible for some conjugacy class in the peripheral structure induced by $\mathcal{G}$ at $G_v$, then one can choose $\epsilon = -1$ for every conjugacy class in the peripheral structure induced by $\mathcal{G}$ at $G_v$.
        \item Suppose that $G_v$ is rigid, and denote by $(G_v,[\underline{w}])$ the peripheral structure induced by $\mathcal{G}$ at $G_v$. Suppose further that there exists a surface-type subpair $(\Sigma,\partial \Sigma)\le (G_v,[\underline{w}])$. If one can choose $\epsilon=-1$ for some $[t]\in \pcl{[\underline{w}]}{\partial \Sigma}$, then the same holds for every $[t']\in \pcl{\underline{w}]}{\partial \Sigma}$.
    \end{enumerate}
    Finally, even though we focus on the case where every cyclic vertex of $\mathcal{G}$ has valence $2$, it is worth noting that in the general case, whenever $[t]$ is attached to a cyclic vertex of valence $\ge 3$, the choice $\epsilon=-1$ is possible. 

\end{rmk}

\begin{proof}[Proof of \Cref{prop:surfaces-prescribed-peripherals}]

We first assemble a collection of surfaces that will be used to construct $\mathcal{H}$. \par \smallskip

Let $v$ be a rigid vertex of $\mathcal{G}$ and let $(G_v,[\underline{w}])$ be the pair induced by $\mathcal{G}$ at $G_v$. By \Cref{thm:Wilton-surfaces}, there exists an admissible subpair $(\Sigma_v,\partial \Sigma_v) \le (G_v,[\underline{w}])$ of weak surface-type. Up to taking finite-index covers of the connected components of $\Sigma_v$ using \Cref{lem:covers-of-surfaces}, we can assume that all of the boundary components in $\partial \Sigma_v$ are elevations of the same degree $d$ of conjugacy classes in $[\underline{w}]$. Similarly, since $(G_v,[\underline{w}])$ is strongly one-ended, for every $[w_i]\in[\underline{w}]$, there exists an admissible subpair $(\Sigma_v^i,\partial \Sigma_v^i)\le (G_v,[\underline{w}]\setminus \{w_i\})$ of weak surface-type. As before, by passing to finite-sheeted covers, we can assume that all of the boundary components in $\partial \Sigma_i$ are elevations of the same degree $d_i$ of conjugacy classes in $[\underline{w}]\setminus \{w_i\}$. Taking further finite covers, we can assure that all of the $d_i$ coincide and are equal to $d$. Finally, by passing again to finite-sheeted covers (and using admissibility), we can assume that every conjugacy class $[w_i]\in[\underline{w}]$ has the same number $k$ of elevations of degree $d$ in each of $(\Sigma_v,\partial \Sigma_v)$ and $(\Sigma_v^j,\partial \Sigma_v^j)$ (for $j\ne i$); furthermore, $k$ and $d$ do not depend on $i$ and $j$. \par \smallskip

Similarly, let $v$ be a surface-type vertex of $\mathcal{G}$ and let $(G_v,[\underline{w}])$ be the pair induced by $\mathcal{G}$ at $G_v$. Then for every $k$ and $d$ \Cref{lem:covers-of-surfaces} implies that there exists a finite-index pull-back subpair $(\Sigma_v,\partial \Sigma_v) \le (G_v,[\underline{w}])$ to which every $\{w_i\} \in [\underline{w}]$ admits $k$ degree-$d$ elevations. Up to taking finite covers of all the surfaces obtained over all of the non-cyclic vertices of $\mathcal{G}$, we may assume that the numbers $k$ and $d$ are uniform across all of the vertices of $\mathcal{G}$. Note that the construction relies on \Cref{lem:covers-of-surfaces}, which may force $k$ to be even; in any case, we can always choose $k$ to be even. \par \smallskip

We continue with an observation. Let $v$ be a cyclic vertex of $\mathcal{G}$, adjacent to edges $e_1,\dots,e_n$. For each $1\le i \le n$, suppose that there is a collection of $a_i\in \mathbb{N}$ hanging elevations that may be attached to $v$ via $e_i$. To ensure that the resulting precover is a surface, one has to match these hanging elevations to-by-two; note that an elevation attached to $v$ via $e_i$ can not be paired with another elevation attached to $v$ via $e_i$. One can carry out such a matching procedure if and only if the following two conditions hold: 
\begin{enumerate}
    \item $a_1+\cdots+a_n$ is even, and 
    \item for every $1\le i \le n$ the following triangle-like inequality holds:
\begin{equation} \label{ineq:tri}
a_i \le \sum \limits_{j \ne i} a_j.
\end{equation}
\end{enumerate}

When these conditions are satisfied, the matching can be done using a greedy algorithm, prioritizing the incident edges $e_i$ for which the number of available elevations $a_i$ is maximal. Assuming that the pieces involved are all of surface-type, such a matching yields a disjoint union of (possibly non-orientable) surfaces, each connected component being of positive genus (that is, containing an orientable subsurface of positive genus).
 \par \smallskip

The proof is divided into cases; in each case we assume that $v$ is a non-cyclic vertex of $\mathcal{G}$, and that $[t]$ is attached to a cyclic vertex $c$.

\begin{enumerate}[label=\textbf{Case \arabic*:}]
\item For completeness, and as a warm-up exercise, even though we assume that the cyclic vertices of $\mathcal{G}$ are of valence $2$, we prove the proposition for $c$ of valence $\ge 3$. For every non-cyclic vertex $v$ of $\mathcal{G}$ take an admissible subpair of weak surface-type $(\Sigma_v, \partial \Sigma_v)$ as described above. It is immediate that the hanging elevations can be matched two-by-two at every cyclic vertex other than $c$. Pick two hanging elevations $[t_1]$ and $[t_2]$ of $[t]$; since $\deg c \ge 3$, the collection of all hanging elevations of $[t]$ save $[t_1]$ and $[t_2]$ satisfy the triangle-like inequality \ref{ineq:tri}. Match them to copies of $G_c$ to obtain a (possibly disconnected) surface with two boundary components $[t_1]$ and $[t_2]$. Restricting to the connected component of $[t_1]$ (and taking a $2$-sheeted cover if it does not contain $[t_2]$) gives the desired result.

\item Suppose now that $v$ is rigid (and that $\deg c = 2$). Let $[w]=\{[w_1],\cdots,[w_\ell]\}$ be the peripheral structure induced by $\mathcal{G}$ at $G_v$, with $[w_1]=[t]$. Let $\Sigma'$ be a $k$-sheeted cover of the (possibly disconnected) surface $\Sigma_v$ whose boundary contains
\begin{enumerate}
    \item $k^2$ degree-$d$ elevations of each $[w_i]$ for $i\ne 1$,
    \item $k\cdot(k-1)$ degree-$d$ elevations of $[w_1]=[t]$, and
    \item $2$ degree-$k\cdot d$ elevations of $[w_1]=[t]$.
\end{enumerate}
Let $\Theta = \Sigma' \sqcup \bigsqcup_{i=2}^{\ell} \Sigma_v^i$; note that $\Theta$ is a disconnected surface containing $k\cdot (\ell-2+k)$ elevations of degree $d$ of each conjugacy class in $[\underline{w}]$, and two additional elevations of $[w_1]=[t]$ of degree $k\cdot d$. For every non-cyclic vertex $v'\ne v$, take $\ell -2 +k$ copies of $\Sigma_{v'}$. We leave it to the reader to match all of the degree-$d$ elevations in this collection of surfaces, two-by-two, and obtain a (connected) surface with two hanging elevations of $[t]$ of degree $k\cdot d$ (after potentially passing to a connected component, and taking a double-sheeted cover).

\item Finally, suppose that $v$ is of surface-type (and that $\deg c = 2$). In this case, since $\mathcal{G}$ is in normal form, the other non-cyclic vertex $v'$ adjacent to $c$ is rigid. Let $[u]=\{[u_1],\ldots,[u_m]\}$ be the peripheral structure induced by $\mathcal{G}$ at $G_{v'}$, such that $[u_1]$ is attached to $c$. We amass a collection of surfaces:
\begin{enumerate}
    \item For every non-cyclic vertex $x\ne v'$ of $\mathcal{G}$, a degree-$d$ cover $\Sigma'_x$ of $\Sigma_x$ with $k\cdot d$ degree-$d$ elevations of each conjugacy class in the peripheral structure induced by $\mathcal{G}$ at $G_x$.
    \item A degree-$d$ cover of $\Sigma_{v'}^1$ to which for every $1<i\le m$, $[u_i]$ admits $k\cdot d$ elevations of degree $d$.
    \item $\frac{k\cdot d - 2}{2}$ copies of the surface obtained by applying the previous case to $v'$ and $[u_1]$. Each of these surfaces has two boundary components, and each of them covers $[u_1]$ with degree $k\cdot d$.
\end{enumerate}
Once again, we leave it to the reader to match all of the hanging elevations, leaving only two elevations of $[t]$ hanging.
\end{enumerate}

\begin{figure}[h]
    \centering
    \includegraphics[width=\linewidth]{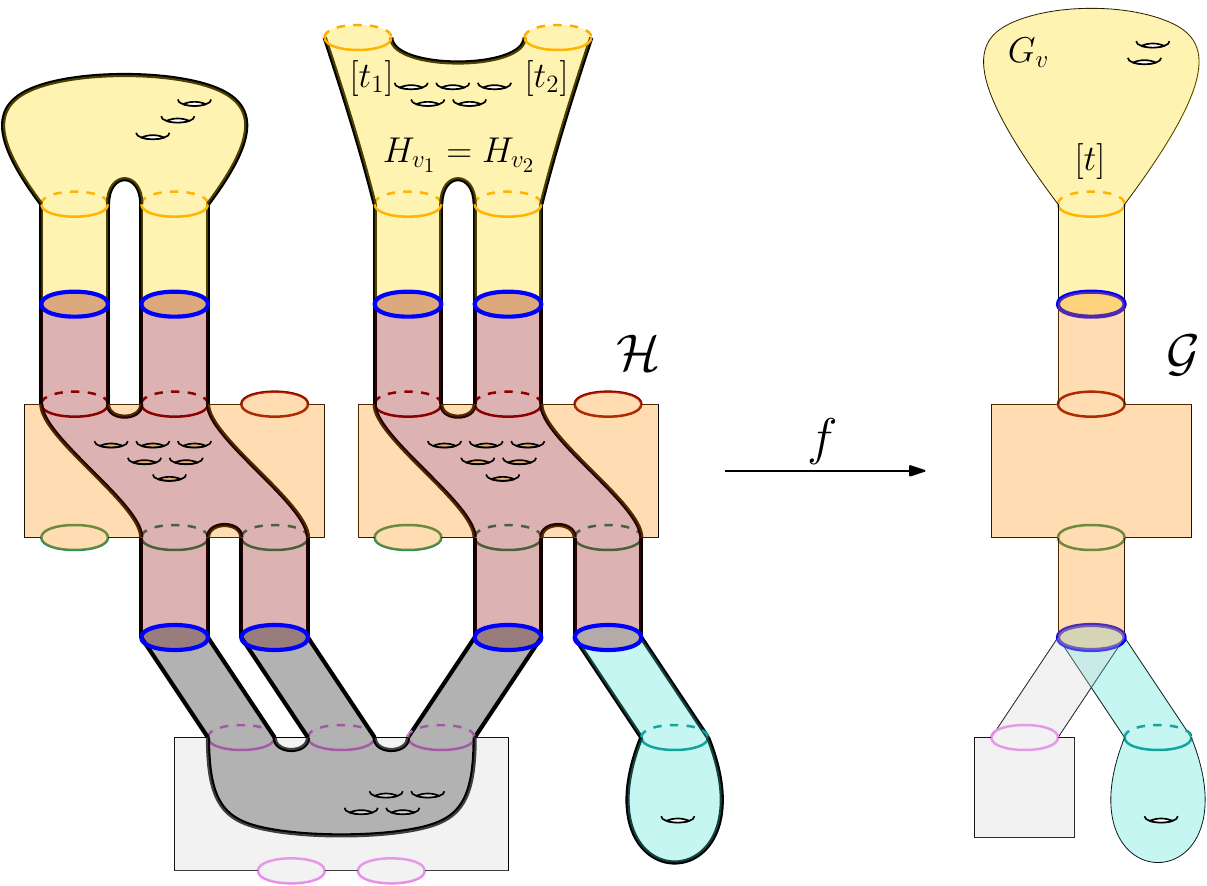}
    \caption{An example of the procedure described in \Cref{prop:surfaces-prescribed-peripherals}. Rigid pieces are represented as rectangles, and the surfaces inside the rectangles are the ones obtained using Wilton's \Cref{thm:Wilton-surfaces}. Cyclic vertices appear in bold blue. The surface $\mathcal{H}$, obtained by gluing several of these pieces, appears in bold, and has two boundary components (hanging elevations) $[t_1]$ and $[t_2]$.}
    \label{fig:surfaces-prescribed-peripherals}
\end{figure}

In each of the cases, if the resulting surface is orientable, the proof is complete. In this case, we have no control over the sign $\epsilon=\pm 1$ and both values can appear. If the resulting surface is non-orientable, we take an orientable double-cover $\Pi$ and obtain a surface with four boundary components $[s_1],[s_2],[s_3]$ and $[s_4]$, each covering $[t]$ with the same degree. By \Cref{lem:boundary-non-orient}, the boundary of the fundamental class of $\Pi$ is
\[
\partial[\Pi] = [s_1]-[s_2]+[s_3]-[s_4].
\]
We take a further $2$-sheeted cover $\Pi'$ of $\Pi$ that admits two boundary components $[t_1]$ and $[t_2]$ covering $[s_1]$ and $[s_2]$, respectively, with degree $2$, and four boundary components $[s_3'],[s_3''],[s_4']$ and $[s_4'']$ covering $[s_3]$ and $[s_4]$ with degree $1$. \par \smallskip

To finish, let $[u]$ be a conjugacy class that can be matched with $[t]$ at a cyclic vertex of $\mathcal{G}$. Repeat the construction for $[u]$ to obtain a surface with two boundary components covering $[u]$. Next, pass to a $2$-sheeted (orientable) cover whose boundary contains four components, and such that the boundary of its fundamental class is given by $[u_1]-[u_2]+[u_3]-[u_4]$. Glue this surface to $\Pi'$, preserving orientability, to obtain $\mathcal{H}$. Note that $H$ is a surface with two boundary components $[t_1]$ and $[t_2]$, and that $\partial [H] = [t_1]-[t_2]$, as desired.
\end{proof}

Finally, \Cref{rmk:propagating-non-orientability} can be obtained in a fashion similar to \Cref{lem:standard-pre-covers}. Suppose we have obtained a surface with $\epsilon=-1$; we take a finite-sheeted cover, and glue some of its boundary components to another surface, in such a way that the resulting surface is non-orientable. Then, we can ``propagate'' this non-orientability along the graph of groups, as gluing a non-orientable surface to any surface yields a non-orientable surface. Finally, whenever we have a non-orientable surface to which $[t]$ admits two hanging elevations, it can be used to construct an orientable one with $\epsilon=-1$, as described in the last paragraph of the proof of \Cref{prop:surfaces-prescribed-peripherals}.

\subsection{Artificial branching within a rigid vertex}

We next produce a pair of overlapping surfaces within a rigid vertex, from which we assemble the artificial branching block that imitates the ``genuine'' branching behaviour of a cyclic vertex of valence $\ge 3$, as in \Cref{prop:branched-torsion}. The construction exploits the strong one-endedness of rigid vertex groups, makes use of Wilton’s surfaces, and relies on surfaces guaranteed by Calegari’s Rationality Theorem to carry out a minimality argument.

\begin{prop} [Artificial branching] \label{prop:artificial-branching}
    Let $(F,[\underline{w}])$ be a malnormal rigid pair, different from a pair of pants. Then, there exists $[w_1]\in [\underline{w}]$ and a finite-index pull-back pair $(H,[\underline{u}])\lepair (F,[\underline{w}])$ satisfying the following properties:
    \begin{enumerate}
        \item There are two embedded subpairs $(H_1,[\underline{u}_1]),(H_2,[\underline{u}_2])\embedpair (H,[\underline{u}])$ of surface-type.
        \item For some $d\ge 1$, $w_1^d\in H_1\cap H_2$; moreover, $[w_1^d]\in[\underline{u}_1]$ and $[w_1^d]\in[\underline{u}_2]$.
        \item $\pcl{[\underline{w}]}{[\underline{u}_1]}\setminus\pcl{[\underline{w}]}{[\underline{u}_2]} \ne \emptyset$ and $\pcl{[\underline{w}]}{[\underline{u}_2]}\setminus\pcl{[\underline{w}]}{[\underline{u}_1]} \ne \emptyset$.
        \item\label{itm:minimality} There is no subpair $(\Sigma, \partial \Sigma)\lepair (H,[\underline{u}])$ of surface-type such that 
        \[
        \pcl{[\underline{w}]}{\partial \Sigma}\subsetneq \pcl{[\underline{w}]}{[\underline{u}_1]}\;\;\text{ or }\;\;\; [w_1]\in \pcl{[\underline{w}]}{\partial \Sigma}\subsetneq \pcl{[\underline{w}]}{[\underline{u}_2]}.
        \]
    \end{enumerate}
\end{prop}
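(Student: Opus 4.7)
By \Cref{thm:rigid-strongly-one-ended} I replace $(F,[\underline{w}])$ by a finite-index pull-back pair in which the pair is strongly one-ended and each $w_i$ is primitive in $F$; malnormality is preserved (cf.\ \Cref{lem:fi-embed-2}). Passing to a further finite-index pull-back pair if necessary (this creates more elevations without destroying the previous properties), I assume $\abs{[\underline{w}]}\ge 3$ and that at least one class in $[\underline{w}]$ has non-trivial image in $F^{\ab}$. The precise choice of $[w_1]\in[\underline{w}]$ is made in Step~3.

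\textbf{Step 2: Two Wilton surfaces.} Fix three pairwise distinct classes $[w_1],[w_2],[w_3]\in[\underline{w}]$. Strong one-endedness makes each of $(F,[\underline{w}]\setminus\{[w_3]\})$ and $(F,[\underline{w}]\setminus\{[w_2]\})$ into a one-ended, malnormal pair, so Wilton's \Cref{thm:Wilton-surfaces} provides admissible weak surface-type subpairs; by \Cref{lem:covers-of-surfaces} and \Cref{lem:pos-gen} (and passing to finite-sheeted covers of each connected component) they are refined into connected orientable surface-type subpairs $(S_1,\partial S_1)$ and $(S_2,\partial S_2)$ of genus $\ge 1$. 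By admissibility, $[w_1]\in \pcl{[\underline{w}]}{\partial S_1}\cap \pcl{[\underline{w}]}{\partial S_2}$, $[w_2]\notin \pcl{[\underline{w}]}{\partial S_1}$, and $[w_3]\notin \pcl{[\underline{w}]}{\partial S_2}$, so the two peripheral closures are incomparable from the outset.

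\textbf{Step 3: Calegari-based minimisation.} Consider the finite collection $\mathcal{A}=\{\pcl{[\underline{w}]}{\partial\Sigma}:\Sigma\lepair(F,[\underline{w}])\text{ is surface-type}\}$ and the subcollection $\mathcal{A}^{[w_1]}=\{A\in\mathcal{A}:[w_1]\in A\}$. The engine for reduction is \Cref{cor:Calegari}: whenever $\pcl{[\underline{w}]}{\partial\Sigma}$ contains a proper subset $A'$ whose classes satisfy a non-trivial linear relation in $F^{\ab}$, one may replace $\Sigma$ by a surface-type subpair whose peripheral closure is contained in $A'$, strictly shrinking the peripheral closure. I choose $[w_1]$ so that the image of $w_1$ in $F^{\ab}$ is non-trivial; this choice forces $\{[w_1]\}\notin\mathcal{A}$, because any surface-type subpair whose boundary classes are positive elevations of $[w_1]$ alone would give an equation $\bigl(\sum d_i\bigr)\cdot[w_1]=0$ in $F^{\ab}$ with $\sum d_i>0$, contradicting the non-triviality of $[w_1]$ in $F^{\ab}$. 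Starting from $(S_1,\partial S_1)$ I iteratively shrink to reach $(H_1,[\underline{u}_1])$ with $\pcl{[\underline{w}]}{[\underline{u}_1]}$ minimal in $\mathcal{A}$; starting from $(S_2,\partial S_2)$ and only permitting shrinkings that keep $[w_1]$ in the peripheral closure, I reach $(H_2,[\underline{u}_2])$ with $\pcl{[\underline{w}]}{[\underline{u}_2]}$ minimal in $\mathcal{A}^{[w_1]}$. Because $\{[w_1]\}\notin\mathcal{A}$, $[w_1]$ survives in $\pcl{[\underline{w}]}{[\underline{u}_1]}$ throughout the first shrinking, and the exclusions $[w_2]\notin\pcl{[\underline{w}]}{[\underline{u}_1]}$ and $[w_3]\notin\pcl{[\underline{w}]}{[\underline{u}_2]}$ persist under reductions, forcing the two minimal peripheral closures to be incomparable — giving (3) and (4).

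\textbf{Step 4: Common boundary, embedding, and the main obstacle.} Applying \Cref{lem:covers-of-surfaces} once more, I pass to finite-sheeted covers of $H_1$ and $H_2$ so that $[w_1]$ admits an elevation of a common degree $d>0$ in each; since $w_1$ is primitive its centraliser in $F$ is $\gen{w_1}$, so conjugating $H_1$ and $H_2$ inside $F$ one may ensure that the specific element $w_1^d\in F$ lies in $H_1\cap H_2$ and represents an elevation in each peripheral structure, securing (2). These finite-index pull-backs preserve $\pcl{[\underline{w}]}{\cdot}$ and hence preserve the minimality of (4). Finally, \Cref{prop:two-surfaces-in-one-subgroup} delivers the desired $(H,[\underline{u}])\lepair(F,[\underline{w}])$ into which (finite-index pull-back pairs of) both $H_1$ and $H_2$ embed as surface-type subpairs, yielding (1); condition (4) automatically carries over to $(H,[\underline{u}])$, since any surface-type subpair of $(H,[\underline{u}])$ is itself a surface-type subpair of $(F,[\underline{w}])$ with peripheral closure in $\mathcal{A}$. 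The main obstacle is the simultaneous satisfaction of (3) and (4): one needs two distinct minimal elements of $\mathcal{A}^{[w_1]}$, one of which is also globally minimal in $\mathcal{A}$, and the choice of $[w_1]$ non-trivial in $F^{\ab}$ — which via Calegari rules out the degenerate case $\{[w_1]\}\in\mathcal{A}$ — combined with the Wilton exclusions persisting through Calegari reductions is what ultimately produces the required incomparability.
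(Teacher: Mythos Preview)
Your argument assembles the right ingredients but makes a crucial choice in the wrong order, and this creates a genuine gap. You fix $[w_1]$ (along with $[w_2],[w_3]$) in Step~2 \emph{before} minimising in Step~3, and then assert that $[w_1]$ survives in the globally minimal closure $\pcl{[\underline{w}]}{[\underline{u}_1]}$. Your justification---that $\{[w_1]\}\notin\mathcal{A}$---is a non-sequitur: a minimal element of $\mathcal{A}$ lying below $\pcl{[\underline{w}]}{\partial S_1}$ need not contain $[w_1]$ at all; it could be, say, $\{[w_2],[w_4]\}$. (Separately, the argument you give for $\{[w_1]\}\notin\mathcal{A}$ is itself incorrect: the surface-induced orientation on a boundary component can disagree with the orientation that makes it a positive-degree elevation of $[w_1]$, so the relation in $F^{\ab}$ is $\bigl(\sum_i\pm d_i\bigr)\cdot w_1=0$, not $\bigl(\sum_i d_i\bigr)\cdot w_1=0$, and the signed sum can vanish. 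The correct argument uses primitivity of $w_1$ to show that the elevations of $[w_1]$ to $\pi_1(\Sigma)$ form part of a basis and hence cannot satisfy the boundary relation; cf.\ \Cref{lem:part-of-a-basis}.) Without $[w_1]\in\pcl{[\underline{w}]}{[\underline{u}_1]}$, condition~(2) fails outright. And even granting it, your argument for (3) is incomplete: the persisting exclusions you record do not rule out $\pcl{[\underline{w}]}{[\underline{u}_1]}=\pcl{[\underline{w}]}{[\underline{u}_2]}$, since after shrinking both closures could equal, for example, $\{[w_1],[w_4]\}$.

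The paper repairs all of this by reversing the order of choices: first pick $(\Sigma_1,\partial\Sigma_1)$ with $\pcl{[\underline{w}]}{\partial\Sigma_1}$ globally minimal in $\mathcal{A}$, and \emph{then} choose $[w_1]\in\pcl{[\underline{w}]}{\partial\Sigma_1}$ (the primitivity argument gives $\lvert\pcl{[\underline{w}]}{\partial\Sigma_1}\rvert\ge2$, so a second element $[w_2]$ is also available there). The second surface $(\Sigma_2,\partial\Sigma_2)$ is then chosen minimal subject to $[w_1]\in\pcl{[\underline{w}]}{\partial\Sigma_2}$ \emph{and} $[w_2]\notin\pcl{[\underline{w}]}{\partial\Sigma_2}$; this explicit exclusion of an element \emph{known to lie in} $\pcl{[\underline{w}]}{\partial\Sigma_1}$ forces the two closures to differ, and global minimality of the first then yields both incomparability and condition~(4) immediately. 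Note also that \Cref{cor:Calegari} plays no role in this minimisation---it enters only later, in \Cref{lem:key-property}.
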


\begin{rmk}
    Each of the two surface-type pairs $(H_1,[\underline{u}_1])$ and $(H_2,[\underline{u}_2])$ can be assumed to have an even number of boundary components, each of which being a degree-$d$ elevation of a conjugacy class in $[\underline{w}]$ for the same $d$ obtained in \Cref{prop:surfaces-prescribed-peripherals}.
\end{rmk}

\begin{proof}
    By Wilton's \Cref{thm:rigid-strongly-one-ended}, up to passing to a finite-index pull-back pair, we may assume that $(F,[\underline{w}])$ is strongly one-ended; by the observation appearing after \Cref{thm:rigid-strongly-one-ended}, we may also assume that each $[w_i]\in [\underline{w}]$ is the conjugacy class of a primitive element. Consider the family
    \[
    \mathcal{S} = \{(\Sigma,\partial \Sigma) \lepair (F,[\underline{w}])\;\;\vert\;\; (\Sigma,\partial \Sigma) \text{ is of surface-type}\}.
    \]
    Wilton's \Cref{thm:Wilton-surfaces} implies that $\mathcal{S}$ non-empty. Take $(\Sigma_1,\partial \Sigma_1) \in \mathcal{S}$ such that $\pcl{[\underline{w}]}{\partial \Sigma_1}$ is minimal by inclusion; choose $[w_1] \in \pcl{[\underline{w}]}{\partial \Sigma_1}$. Since $w_1$ is primitive, the collection of elevations of $w_1$ to $\Sigma_1$ form a part of a common basis (see \Cref{lem:part-of-a-basis}). It follows that these elevations cannot bound a surface, and thus $\pcl{[\underline{w}]}{\partial \Sigma _1}\ne \{[w_1]\}$. Without loss of generality, we assume that $[w_1],[w_2]\in\pcl{[\underline{w}]}{\partial \Sigma_1}$. \par \smallskip

    Consider the following subfamily of $\mathcal{S}$:
    \[
    \mathcal{S}'=\{(\Sigma,\partial \Sigma) \in \mathcal{S} \;\;\vert\;\; [w_1] \in \pcl{[\underline{w}]}{\partial \Sigma} \;\; \text{ and } \;\; [w_2]\notin\pcl{[\underline{w}]}{\partial \Sigma}\}.
    \]
    Since $(F,[\underline{w}])$ is strongly one-ended, once again Wilton's \Cref{thm:Wilton-surfaces} implies that $\mathcal{S}'\ne \emptyset$. Take $(\Sigma_2,\partial \Sigma_2) \in \mathcal{S}'$ such that $\pcl{[\underline{w}]}{\partial \Sigma_2}$ is minimal by inclusion (in $\mathcal{S}'$). Since $[w_2]$ lies outside the peripheral closure of $\partial \Sigma_2$, and since $\pcl{[\underline{w}]}{\partial \Sigma_1}$ is minimal in $\mathcal{S}$, we have that both
    \[
    \pcl{[\underline{w}]}{\partial \Sigma_1} \setminus \pcl{[\underline{w}]}{\partial \Sigma_2} \ne \emptyset \;\; \text{ and }\;\;
    \pcl{[\underline{w}]}{\partial \Sigma_2} \setminus \pcl{[\underline{w}]}{\partial \Sigma_1} \ne \emptyset.
    \]

    Suppose for a contradiction that there exists $(\Sigma, \partial \Sigma) \in \mathcal{S}$ whose peripheral closure is strictly contained in either that of $(\Sigma_1, \partial \Sigma_1)$ or of $(\Sigma_2, \partial \Sigma_2)$. We clearly cannot have that $\pcl{[\underline{w}]}{\partial \Sigma} \subsetneq \pcl{[\underline{w}]}{\partial \Sigma_1}$ by minimality. Similarly, if $\pcl{[\underline{w}]}{\partial \Sigma} \subsetneq \pcl{[\underline{w}]}{\partial \Sigma_2}$ then in particular $[w_2]\notin \pcl{[\underline{w}]}{\partial \Sigma}$ and $(\Sigma, \partial \Sigma)\in \mathcal{S}'$, contradicting minimality (in $\mathcal{S}'$). We deduce that condition \emph{(3)} is satisfied. \par \smallskip
    
    Finally, by taking conjugates and finite index subgroups of $\pi_1(\Sigma_1)$ and $\pi_1(\Sigma_2)$ we can ensure that $w_1^d\in \pi_1(\Sigma_1) \cap \pi_1(\Sigma_2)$ and that $[w_1^d]$ belongs to both $\partial \Sigma_1$ and $\partial \Sigma_2$ for some $d\ge1$. The conclusion now follows from \Cref{prop:two-surfaces-in-one-subgroup}.
\end{proof}

As discussed in our outline of the strategy behind the proof of \Cref{mainthm}, controlling the image in the abelianization of the boundary element $[w_1^d]$ shared by the two surfaces obtained via \Cref{prop:artificial-branching} is crucial for producing homological torsion. The following lemma, which we view as the key property of the artificial branching construction, makes this assertion precise:

\begin{lem}\label{lem:key-property}
    In the setting of \Cref{prop:artificial-branching}, write $[\underline{u}_1]\cap [\underline{u}_2]=\{[s_1],[s_2],\ldots,[s_r]\}$. Then $s_1,\dots,s_r$ generate a rank-$r$ direct summand in the free abelian group $H^\ab$.
\end{lem}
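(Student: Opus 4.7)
The plan is to split the claim into two independent statements: (i) the elements $s_1, \dots, s_r$ are $\mathbb{Z}$-linearly independent in $H^\ab$ (giving the rank-$r$ part), and (ii) the subgroup they generate is pure in $H^\ab$ (which, since $H$ is free and hence $H^\ab$ is free abelian, is exactly what is needed for the subgroup to be a direct summand). Both parts are proved by contradiction, and in both the main tool is Calegari's rationality theorem (\Cref{cor:Calegari}) together with the minimality condition \cref{itm:minimality} of \Cref{prop:artificial-branching}. Before starting, I would verify that $(H,[\underline{u}])$ inherits malnormality from $(F,[\underline{w}])$ as a finite-index pull-back pair, and that each $s_i$ is primitive in $H$ (using that the $w_j$'s are primitive in $F$ together with the definition of elevation degree). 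Consequently the $s_i$'s generate pairwise distinct maximal cyclic subgroups of $H$.

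For (i), suppose $\sum_i \lambda_i s_i = 0$ in $H^\ab$ with the $\lambda_i$'s not all zero. The formal sum $\sum \lambda_i [s_i]$ is then a non-zero element of $B_1^H(H;\mathbb Z)$, so \Cref{cor:Calegari}, applied to the malnormal pair $(H,[\underline{u}])$ (in which the $[s_i]$'s belong to the peripheral structure), produces a subpair $(\Sigma,\partial\Sigma)\lepair(H,[\underline{u}])$ of surface-type whose peripheral closure $\pcl{[\underline{u}]}{\partial\Sigma}$ contains every $[s_i]$ with $\lambda_i\ne 0$. Moreover, inspecting the proof of \Cref{cor:Calegari} shows that every boundary component of $\Sigma$, after the reduction in $B_1^H$, corresponds to a maximal cyclic conjugacy class actually appearing in $\sum\lambda_i[s_i]$; since these maximal cyclic classes are pairwise distinct, we get the upper bound $\pcl{[\underline{u}]}{\partial\Sigma}\subseteq\{[s_1],\dots,[s_r]\}\subseteq[\underline{u}_1]\cap[\underline{u}_2]$. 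Passing to the pull-back into $[\underline{w}]$ gives $\pcl{[\underline{w}]}{\partial\Sigma}\subseteq \pcl{[\underline{w}]}{[\underline{u}_1]}\cap\pcl{[\underline{w}]}{[\underline{u}_2]}$, and the nonemptiness assertion (3) in \Cref{prop:artificial-branching} supplies an element of $\pcl{[\underline{w}]}{[\underline{u}_1]}\setminus\pcl{[\underline{w}]}{[\underline{u}_2]}$. Therefore $\pcl{[\underline{w}]}{\partial\Sigma}\subsetneq\pcl{[\underline{w}]}{[\underline{u}_1]}$, contradicting \cref{itm:minimality}.

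For (ii), suppose that $\langle s_1,\dots,s_r\rangle$ is not pure. Then there exist a prime $p$, integers $\lambda_i$ not all divisible by $p$, and $y\in H^\ab\setminus\langle s_1,\dots,s_r\rangle$ with $\sum\lambda_i s_i=py$. Pick a lift $\tilde y\in H$ and write $[\tilde y]=k[\tilde y_0]$ in $B_1^H$ with $\tilde y_0$ primitive in $H$. If the maximal cyclic class of $\tilde y_0$ coincides with that of some $s_j$, malnormality forces $\tilde y_0$ to be conjugate to $s_j^{\pm 1}$, whence $y\in\langle s_j\rangle$, contradicting the choice of $y$. Otherwise $\sum\lambda_i[s_i]-pk[\tilde y_0]$ is a non-zero element of $B_1^H$ supported on $r+1$ pairwise distinct maximal cyclic classes, and \Cref{thm:Calegari-surfaces} produces a compact orientable surface $\Sigma$ and an injective $\psi:\pi_1(\Sigma)\to H$ whose boundary realizes a non-zero multiple of this formal sum. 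The plan is to eliminate the $\tilde y_0$-boundary components by a gluing/covering procedure: since the total coefficient $-\mu p k$ on $[\tilde y_0]$ is a multiple of $p$, one can pass to a suitable finite cover $\widetilde\Sigma$ chosen so that the elevations of the $\tilde y_0$-boundary pair up with matching degrees and opposite orientations, and then glue these pairs together to obtain a new surface $\Sigma'$ whose boundary components lie entirely among the $[s_i]$'s, with some $[s_i]$-coefficient still non-zero (because not all $\lambda_i$ are divisible by $p$). The surface $\Sigma'$ is now a surface-type subpair of $(H,[\underline{u}])$, and the linear-independence argument of the previous paragraph applies verbatim to give a contradiction with \cref{itm:minimality}.

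The main obstacle is precisely the surgery step in part (ii): the Calegari surface $\Sigma$ is not itself a surface-type subpair of $(H,[\underline{u}])$ because its boundary components lying in the conjugacy class of $\tilde y_0$ need not be elevations of anything in $[\underline{u}]$. The careful part is arranging a cover of $\Sigma$ in which the $\tilde y_0$-boundaries can be glued in pairs to cap them off, while keeping track of the effect on the $[s_i]$-coefficients so that at least one of them remains non-zero after the gluing; this is where the divisibility of the $\tilde y_0$-coefficient by $p$ is crucial, and where I expect the most delicate book-keeping.
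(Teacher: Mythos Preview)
Your part (i) is exactly the paper's proof: assume a nontrivial relation $\sum\lambda_i s_i=0$ in $H^{\ab}$, apply \Cref{cor:Calegari} to the malnormal pair $(H,\{[s_1],\dots,[s_r]\})$ to obtain a surface-type subpair $(\Sigma,\partial\Sigma)\lepair(H,[\underline u])$ with $\pcl{[\underline w]}{\partial\Sigma}\subseteq\pcl{[\underline w]}{[\underline u_1]}\cap\pcl{[\underline w]}{[\underline u_2]}\subsetneq\pcl{[\underline w]}{[\underline u_1]}$, and contradict \cref{itm:minimality}. That is the entirety of the paper's argument; the paper does not separately treat purity.

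Your proposed argument for (ii), however, has a structural gap rather than a book-keeping one. Gluing together the $\tilde y_0$-boundary components of (a cover of) the Calegari surface $\Sigma$ does produce a new surface $\Sigma'$, but there is no reason for the induced map $\pi_1(\Sigma')\to H$ to remain injective: identifying two boundary circles that map to conjugate elements of $H$ amounts to forming an HNN extension of $\pi_1(\widetilde\Sigma)$, and the homomorphism to $H$ will generically acquire kernel. Consequently $(\Sigma',\partial\Sigma')$ is not a surface-type \emph{subpair} of $(H,[\underline u])$, and \cref{itm:minimality} cannot be invoked. The divisibility of the $\tilde y_0$-coefficient by $p$ lets you pair up boundaries combinatorially, but it does nothing to guarantee $\pi_1$-injectivity after the gluing.

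A cleaner route to the direct-summand assertion bypasses Calegari altogether. By the remark following \Cref{prop:two-surfaces-in-one-subgroup} (Marshall Hall), after replacing $H$ by a further finite-index overgroup of $H_1$---which preserves conditions (1)--(4) of \Cref{prop:artificial-branching}, since any surface-type subpair of the new pair is already one of the old with the same closure in $[\underline w]$---one may assume $H_1$ is a free factor of $H$. Condition (3) forces $\{[s_1],\dots,[s_r]\}=[\underline u_1]\cap[\underline u_2]\subsetneq[\underline u_1]$, so the $s_i$ are a \emph{proper} subset of the boundary components of the orientable positive-genus surface underlying $(H_1,[\underline u_1])$; any such proper subset extends to a free basis of $H_1$. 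Hence the $s_i$ span a direct summand of $H_1^{\ab}$, and therefore of $H^{\ab}$.
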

\begin{proof}
    Striving for a contradiction, suppose that a non-trivial equation $\lambda_1 \cdot s_1+\lambda_2\cdot s_2+\cdots+\lambda_r\cdot s_r=0$ holds true in $H^\ab$. Then, by Calegari's Rationality Theorem (see \Cref{cor:Calegari}), there exists a surface-type subpair $(\Sigma,\partial \Sigma)\le (H,[\underline{u}])$ with 
    \[
        \pcl{[\underline{w}]}{\partial \Sigma}\subseteq\pcl{[\underline{w}]}{[\underline{u}_1]}\cap\pcl{[\underline{w}]}{[\underline{u}_2]} \subsetneq \pcl{[\underline{w}]}{[\underline{u}_1]}.
    \]
    This contradicts \cref{itm:minimality} of \Cref{prop:artificial-branching}.
\end{proof}

We are now ready to prove the main \Cref{thm:main}. 

\begin{proof}[Proof of \Cref{thm:main}]
    Recall \Cref{assumption}. In particular, we assume that every cyclic vertex in $\mathcal{G}$ is of valence $2$, and that there is at least one rigid vertex $v$ in $\mathcal{G}$. Denote the pair induced by $\mathcal{G}$ at $G_v$ by $(F=G_v,[\underline{w}])$, and since $\mathcal{G}$ is in normal form, $(F,[\underline{w}])$ is not a pair of pants. \par \smallskip

    \Cref{prop:artificial-branching} gives rise to a finite-index pull-back pair $(H,[\underline{u}])\le (F,[\underline{w}])$, accompanied by two embedded surface-type subpairs
    \[
    (H_1,[\underline{u}_1]),(H_2,[\underline{u}_2])\embedpair(H,[\underline{u}]).
    \]
    $[w_1^d]$ is a boundary component of both $H_1$ and $H_2$, and it satisfies the key property of \Cref{lem:key-property}. In preparation for the construction that mimics the proof of \Cref{prop:branched-torsion}, we amass a collection of surfaces with boundary: each $[w_i]\in [\underline{w}]$ is attached to a cyclic vertex of $\mathcal{G}$, and there exists a unique conjugacy class $[s]$ (different from $[w_i]$) that is also attached to this cyclic vertex. \Cref{prop:surfaces-prescribed-peripherals} equips us with a surface $\Sigma_{[w_i]}$ with two boundary components $[s_1]$ and $[s_2]$ which are degree-$d$ elevations of $[s]$, and such that the boundary of the fundamental class of $\Sigma_{[w_i]}$ is $\partial[\Sigma_{[w_i]}]=[s_1]+\epsilon \cdot [s_2]$ for some $\epsilon \in \{\pm1\}$. By \Cref{rmk:propagating-non-orientability}, $\epsilon$ can be chosen uniform across all such surfaces. In what follows, if $x$ denotes an elevation of $[w_i]$, we also use $\Sigma_x$ to denote $\Sigma_{[w_i]}$.
    \par \smallskip

    We also fix some notation for conjugacy classes in the abelian group $H^{\ab}$, which will serve as variables in a system of $\partial$-equations:
    \begin{enumerate}
        \item $x$ is the element corresponding to $[w_1^d]$.
        \item $s_1,\ldots,s_a$ correspond to the other boundary components shared between $H_1$ and $H_2$.
        \item $t_1,\ldots,t_b$ correspond to the boundary components of $H_1$ that are not boundary components of $H_2$.
        \item $p_1,\ldots, p_c$ correspond to the boundary components of $H_2$ that are not boundary components of $H_1$.
    \end{enumerate}
    The surfaces $H_1$ and $H_2$ give rise to a pair of $\partial$-equations in $H^{\ab}$,
    \[
    x\pm s_1\pm\dots\pm s_a+t_1+\dots+t_b=0 \;\; \text{ and }\;\; x\pm s_1\pm\dots\pm s_a+p_1+\dots+p_c=0,
    \]
    respectively. Note that the signs of each $s_i$ may not be the same in both equations. Finally, by \Cref{lem:key-property} $x$ generates an infinite cyclic direct summand in the quotient $H^{\ab}/\gen{s_1,\ldots,s_a}$. \par \smallskip

    We next assemble the building blocks which will be used in the construction; these building blocks appear in \Cref{fig:pieces}, and are counterparts of the three surfaces and two cyclic vertices that appear in a standard branched surface (see \Cref{def:standard}). 
    \begin{enumerate}
        \item \textbf{The artificial branching block }$X$. This block replaces the cyclic vertices $y$ and $z$ in the proof of \Cref{prop:branched-torsion}. Take two copies $H'$ and $H''$ of $H$. $X$ is obtained by attaching, for each $1\le i \le a$, the surface $\Sigma_{s_i}$ (constructed in the beginning of the proof) to $s_i'$ and $s_i''$. \par \smallskip
        Write 
        \begin{equation}\label{boundary_notation}            
        \tilde{x}=x'+\epsilon x'',\;\; \tilde{s}_i=s'_i+s''_i, \;\; \tilde{t}_i=t'_i+t''_i \;\;\text{ and } \;\; \tilde{p}_i=p'_i+p''_i.
        \end{equation}
        Each $\Sigma_{s_i}$ gives rise to a $\partial$-equation $s_i'+\epsilon s_i''=\tilde{s}_i=0$ in $\pi_1(X)^{\ab}$, so that
        \[
        X^\ab\cong (H')^\ab\oplus (H'')^\ab/\gen{s_1,\ldots,s_a}.
        \]
        Moreover, $\tilde{x}$ generates an infinite direct summand of $\pi_1(X)^{\ab}$, and appears in the following two $\partial$-equations which hold true in $\pi_1(X)^{\ab}$:
        \[
        \tilde{x}+\tilde{t}_1+\cdots+\tilde{t}_b=0\;\;\text{ and }\;\; \tilde{x}+\tilde{p}_1+\cdots+\tilde{p}_c=0.
        \]
        \item \textbf{The surface }$\Sigma$, is simply the surface $\Sigma_x$. Note that gluing $\Sigma$ to $X$ identifies $\partial[\Sigma]$ with $\tilde{x}$.
        \item \textbf{The surface }$\Theta$. Take two copies $H_1^\dagger$ and $H_1^\ddagger$ of $H_1$, and copies of the surfaces $\Sigma_x,\Sigma_{s_1},\ldots, \Sigma_{s_a}$. Glue $\Sigma_x$ and each $\Sigma_{s_i}$ to $H_1^\dagger$ and $H_1^\ddagger$, so that the following $\partial$-equations are satisfied in the abelianization,
        \[
        x^\dagger+\epsilon x ^\ddagger=0,\;\;s_1^\dagger + \epsilon s_1^\ddagger=0,\ldots,s_a^\dagger + \epsilon s_a^\ddagger = 0.
        \]
        Finally, for each $1\le i \le b$, let $T_i$ be a $2$-sheeted cover of $\Sigma_{t_i}$ with four boundary components. $\Theta$ is obtained by gluing two of the boundary components of each $T_i$ to $t_i^\dagger$ and $t_i^\ddagger$ (choosing orientations so that $\Theta$ is orientable). Note that $\Theta$ has $2\cdot b$ boundary components, and that gluing them to $X$ identifies $\partial[\Theta]$ with $\tilde{t}_1+\cdots+\tilde{t}_b$.
        \item \textbf{The surface }$\Pi$ is constructed like $\Theta$, replacing $H_1$ with $H_2$. $\Pi$ has $2\cdot c$ boundary components, and gluing them to $X$ identifies $\partial [\Pi]$ with $\tilde{p}_1+\cdots+\tilde{p}_c$.
    \end{enumerate}

\begin{figure}[h!]
    \centering
    \includegraphics[width=\linewidth]{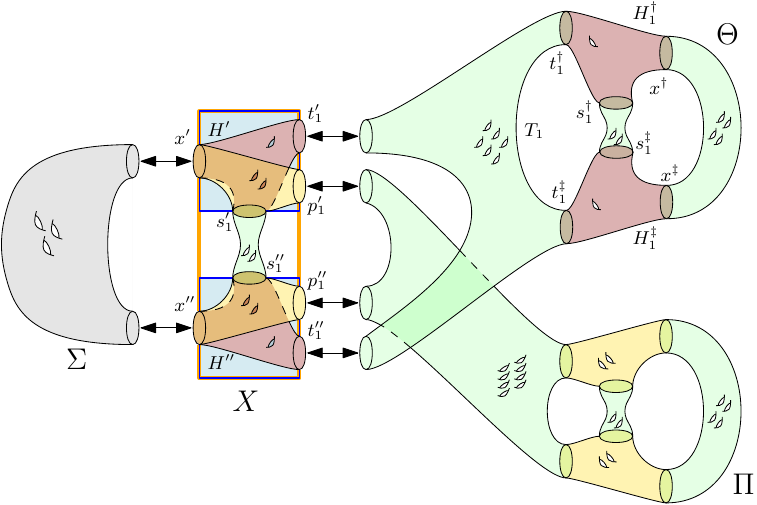}
    \caption{The artificial branching block $X$ (highlighted in bold orange) and the surfaces $\Sigma$, $\Theta$ and $\Pi$, arranged in the configuration of a standard branched surface (compare with \Cref{fig:standard-branched-surface}). Copies of $H$ are blue squares, and copies of the surfaces $H_1$ and $H_2$ appear, respectively, in red and yellow. Surfaces obtained by means of \Cref{prop:surfaces-prescribed-peripherals} are green (except for $\Sigma$). }
     \label{fig:pieces}
\end{figure}

    With the pieces $X,\Sigma, \Theta$ and $\Pi$ in place, we mimic the proof of \Cref{prop:branched-torsion}. To further simplify notation, following \ref{boundary_notation} we write
    \[
    \tilde{t}=\tilde{t_1}+\cdots + \tilde{t}_b\;\;\text{ and }\;\; \tilde{p}=\tilde{p}_1+\cdots+\tilde{p}_c.
    \]
    Recall that $\tilde{x}$ generates an infinite cyclic direct summand of $\pi_1(X)^{\ab}$, and that $\tilde{x}+\tilde{t}=\tilde{x}+\tilde{p}=0$. Also, the boundaries of $\Sigma$, $\Theta$ and $\Pi$ may be glued, respectively, to $\tilde{x}$, $\tilde{t}$ and $\tilde{p}$ in $X$. \par \smallskip

    Denote $M=\bbZ/r_1\bbZ\oplus\dots\oplus\bbZ/r_n\bbZ$ for some integers $r_1,\dots,r_n\ge2$. As in the proof of \Cref{prop:branched-torsion}, we assemble the following pieces:
    \begin{enumerate}
        \item Copies $X_{i,j}$ of $X$ for $i=1,\dots,n$ and $j=1,\dots,3r_i-2$. For each copy $X_{i,j}$ of $X$, we denote by $\tilde{x}_{i,j}$, $\tilde{t}_{i,j}$ and $\tilde{p}_{i,j}$ the corresponding elements in $\pi_1(X_{i,j})^\ab$.
        \item Connected covers $\Sigma_{i,0}$ of $\Sigma$ for $i=1,\ldots,n$, so that $\Sigma_{i,0}$ has $2\cdot r_i$ boundary components covering $x$ with degree $1$.
        \item Connected $2$-sheeted covers $\Sigma_{i,j}$ of $\Sigma$, for $i=1,\ldots,n$ and $j=1,\ldots,r_i-1$, each with four boundary components, each covering $x$ with degree $1$.
        \item Connected $2$-sheeted covers $\Theta_{i,j}$ of $\Theta$, for $i=1,\ldots, n$ and $j=1,\ldots,r_i-1$, each with $4\cdot b$ boundary components.
        \item Connected $2$-sheeted covers $\Pi_{i,j}$ of $\Pi$, for $i=1,\ldots,n$ and $j=1,\ldots,r_i-1$, each with $4\cdot c$ boundary components.
        \item Connected covers $\Theta_0$ and $\Pi_0$ of $\Theta$ and $\Pi$, respectively, which will be explicitly determined later.
    \end{enumerate}
    For a given $1\le i \le n$, following \Cref{tab:torsion-summand}, the surfaces $\Sigma_{i,0},\Sigma_{i,j},\Theta_{i,j}$ and $\Pi_{i,j}$ are glued to the artificial branching blocks $X_{i,j}$ producing the following $\partial$-equations in the abelianization:
    \[
        \partial[\Sigma_{i,0}]=\tilde{x}_{i,1}+\dots+\tilde{x}_{i,r_i},
    \]
    \[
        \partial[\Sigma_{i,j}]=\tilde{x}_{i,r_i+2j-1}+\tilde{x}_{i,r_i+2j},
    \]
    \[
        \partial[\Theta_{i,j}]=\tilde{t}_{i,r_i+2j-2}+\tilde{t}_{i,r_i+2j-1},\text{ and}
    \]
    \[
        \partial[\Pi_{i,j}]=\tilde{p}_{i,j}+\tilde{p}_{i,r_i+2j-1}.
    \]
    Finally, $\Theta_0$ and $\Pi_0$ are chosen so that their boundary covers all of the hanging elevations of $t_1,\ldots,t_b,p_1,\ldots,p_c$ across all artificial branching blocks $X_{i,j}$. \par \smallskip

    One easily verifies that the the resulting precover $\mathcal{K}$ of $\mathcal{G}$ is connected. The abelianization of its fundamental group $K$ is given by (where $i$ and $j$ range over $i=1,\dots,n$ and $j=1,\dots,r_i-1$):
    \[
        K^{\ab}=\frac{\bigoplus_{i,j}X_{i,j}^\ab}{\gen{\partial[\Sigma_{i,0}],\partial[\Sigma_{i,j}],\partial[\Theta_{i,j}],\partial[\Pi_{i,j}],\partial[\Theta_0],\partial[\Pi_0]}} \oplus \mathbb{Z}^\ell,
    \]
    for some $\ell \in \mathbb{N}$. Using the $\partial$-equations $\tilde{x}_{i,j}+\tilde{t}_{i,j}=\tilde{x}_{i,j}+\tilde{p}_{i,j}=0$, as well as the fact that the $\partial$-equations given by $\partial[\Theta_0]$ and $\partial[\Pi_0]$ are redundant, reduces the above to
    \[
        K^{\ab}=\frac{\bigoplus_{i,j}X_{i,j}^\ab}{\gen{\tilde{x}_{i,r_i+2j-1}+\tilde{x}_{i,r_i+2j},\tilde{x}_{i,r_i+2j-2}+\tilde{x}_{i,r_i+2j-1},\tilde{x}_{i,j}+\tilde{x}_{i,r_i+2j-1},\tilde{x}_{i,1}+\dots+\tilde{x}_{i,r_i}}} \oplus \mathbb{Z}^\ell.
    \]
    Lastly, the subgroup of $K^{\ab}$ generated by all the $\tilde{x}_{i,j}$ is a direct summand isomorphic to
    \[
        \frac{\bigoplus_{i,j}\mathbb{Z}\tilde{x}_{i,j}}{\gen{\tilde{x}_{i,r_i+2j-1}+\tilde{x}_{i,r_i+2j},\tilde{x}_{i,r_i+2j-2}+\tilde{x}_{i,r_i+2j-1},\tilde{x}_{i,j}+\tilde{x}_{i,r_i+2j-1},\tilde{x}_{i,1}+\dots+\tilde{x}_{i,r_i}}},
    \]
    which, following the computation in the proof of \Cref{prop:branched-torsion}, contains a direct summand $\bbZ/r_1\bbZ\oplus \cdots \oplus\bbZ/r_n\bbZ\cong M$. We remark that the generator of the summand $\bbZ/r_i\bbZ$ is $\tilde{x}_{i,1}$. \par \smallskip

    Finally, we observe that there is a natural map from a branched surface $\mathcal{B}_M$ (with $H_1(B_M)=M\oplus \mathbb{Z}^r$) to $\mathcal{K}$. Here, $\mathcal{B}_M$ is the branched surface obtained from $\mathcal{K}$ by replacing the copies of the rigid piece $H$ in the artificial branching blocks $X_{i,j}$ with copies of the surfaces $H_1$ and $H_2$ (intersecting only at $\tilde{x}$ and $\tilde{s})$. \Cref{cor:vr-direct-factor} yields a finite-index subgroup $G_0\le G$ that retracts onto $K$, finishing the proof.
\end{proof}

\section{Applications to profinite rigidity} \label{sec:profinite}

In this section, we explain how \Cref{mainthm} can be applied to the study of profinite rigidity. As mentioned in the \nameref{intro}, the collection of abelianizations of all finite-index subgroups of a finitely generated, residually finite group is a profinite invariant. It therefore follows almost immediately from \Cref{mainthm} that free products of free groups and (closed) surface groups are profinitely rigid among hyperbolic graphs of free groups with cyclic edges. The lion's share of the proof of \Cref{maincorprof} below is identical to the proof of Wilton and Zalesskii in \cite[Theorem A]{wz:decompositions}, which shows that the profinite completion of the fundamental group of a closed, orientable 3-manifold determines its Kneser-Milnor decomposition (Wilkes extended this result to non-closed 3-manifolds with incompressible boundary in \cite[Theorem 6.22]{wilkes:decompositions}). Their proof uses a profinite version of Poincar\'e duality; for a thorough treatment of profinite Poincaré duality groups, we refer the reader to \cite[Section 1.2]{wz:decompositions}.

\begin{thm}[\Cref{maincorprof}]
    \label{thm:pf-rigid}
    Let $G$ be a free product of finitely many free groups and fundamental groups of compact (possibly non-orientable) surfaces, and let $H$ be a hyperbolic group that splits as a graph of free groups amalgamated along cyclic subgroups. If $\widehat{G} \cong \widehat{H}$ then $G\cong H$.
\end{thm}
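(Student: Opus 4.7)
The plan is to split the proof into two steps. First, I would apply \Cref{mainthm} to force $H$ to itself be a free product of free and (closed) surface groups; once both $G$ and $H$ lie in this class, I would invoke the Wilton--Zalesskii machinery \cite{wz:decompositions} on recovering free-product decompositions from profinite completions to conclude $G \cong H$. The genuinely new contribution is the first step; the second is essentially an application of existing profinite rigidity results.

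For the first step, I would exploit the fact that the isomorphism type of the abelianization of each finite-index subgroup is a profinite invariant: open subgroups of $\widehat{G}$ correspond bijectively to finite-index subgroups of $G$, and $(G')^{\ab}$ is determined by the abelianization of the corresponding open subgroup of $\widehat{G}$. Since $G$ is a free product of free groups and closed surface groups, the Kurosh subgroup theorem ensures that every finite-index subgroup $G' \le G$ is again of this form, so $(G')^{\ab}$ is isomorphic to $\mathbb{Z}^r \oplus (\mathbb{Z}/2\mathbb{Z})^s$ for some $r, s \ge 0$, and in particular contains no odd torsion. If $H$ were not a free product of free and surface groups, then \Cref{mainthm} applied to $M = \mathbb{Z}/3\mathbb{Z}$ would produce a finite-index subgroup of $H$ with $\mathbb{Z}/3\mathbb{Z}$ as a direct summand of its abelianization, contradicting $\widehat{G} \cong \widehat{H}$. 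This forces $H$ into the same class as $G$.

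For the second step, with both $G$ and $H$ now free products of free groups and closed surface groups, I would follow the Wilton--Zalesskii strategy \cite{wz:decompositions} essentially verbatim. Writing $G$ and $H$ in Grushko normal form, the task reduces to matching the freely indecomposable factors. Hyperbolic surface factors are distinguished from free factors because they are virtually profinite $PD_2$ groups while free groups have cohomological dimension $1$; closed surface groups are profinitely rigid among themselves, with orientability detected by $2$-torsion in the abelianization and genus detected by the abelianization rank; and the free rank is recovered from the $\widehat{\mathbb{Z}}$-rank of $\widehat{G}^{\ab}$ after subtracting the surface contributions. Low-Euler-characteristic factors in $G$ (torus, Klein bottle) are ruled out because they would embed $\widehat{\mathbb{Z}}^2$ into $\widehat{H}$, which is precluded by $H$ being hyperbolic and virtually compact special (by Hsu--Wise and Agol--Wise). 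The main technical obstacle---profinite Bass--Serre theory for detecting free product decompositions---is entirely handled in \cite{wz:decompositions}; the crux of the present proof is the reduction in the first step, where \Cref{mainthm} does the heavy lifting.
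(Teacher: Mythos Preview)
Your proposal is correct and follows essentially the same approach as the paper: both use \Cref{mainthm} with $M = \mathbb{Z}/3\mathbb{Z}$ to force $H$ into the class of free products of free and surface groups, then invoke the Wilton--Zalesskii profinite Bass--Serre machinery \cite{wz:decompositions} to match the Grushko factors. The paper spells out the non-orientable case more explicitly---passing to a carefully chosen index-$2$ subgroup with torsion-free abelianization to reduce to the orientable setting, where the profinite $PD_2$ argument applies directly---but your sketch identifies the same ingredients.
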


Before turning to the proof of \Cref{thm:pf-rigid}, we note that this result forms part of a broader effort to approach Remeslennikov’s question—and related problems—by progressively distinguishing free, surface, and closely related groups from other groups that share many of their structural features. A natural next step would be to prove that free products of free and surface groups are profinitely rigid within the natural class of finitely generated, residually free groups. While we expect this to be true, our methods do not currently yield such a result. In earlier work, the second author and Morales showed that direct products of free and surface groups are profinitely rigid among finitely presented, residually free groups \cite[Theorem F]{fru24}. The case of free products, however, seems to present significantly greater challenges.

\begin{proof}[Proof of \Cref{thm:pf-rigid}]
    Suppose for a contradiction that $H$ is not a free product of free and surface groups; by \Cref{mainthm}, $H$ admits a finite-index subgroup $H'\le H$ such that $\mathbb{Z}/3\mathbb{Z}$ is a direct summand of $(H')^{\ab}$. Since $\widehat{G} \cong \widehat{H}$, the same holds for $G$, that is, there is a finite-index subgroup $G'\le G$ such that $\mathbb{Z}/3\mathbb{Z}$ is a direct summand of $(G')^{\ab}$. But this is absurd: since $G'$ itself is a free product of free and surface groups, $\mathrm{Tor}(G')^{\ab}\cong \bigoplus_{i=1}^\ell \mathbb{Z}/2\mathbb{Z}$. 

    Write $G=\Asterisk_{i=1}^n S_i \ast F$ where each $S_i$ is a (non-trivial, closed) surface group and $F$ is a finitely generated free group. Assume first that each $S_i$ is orientable. By the previous paragraph, we have that $H=\Asterisk_{i=1}^m S'_i \ast F'$ where each $S'_i$ is a (non-trivial, closed) surface group and $F'$ is a finitely generated free group. Since $H$ is hyperbolic, subgroup separable and torsion free, none of the $S'_i$ is the fundamental group of a torus. In addition, $\widehat{G} \cong \widehat{H}$ implies that $H^{\ab}$ is torsion-free, and therefore each $S'_i$ is orientable. Therefore, each one of $S_i$ and $S'_i$ is a Poincar\'e duality group of dimension $2$. We obtain that whenever $S_i$ or $S'_i$ acts on a profinite tree with trivial edge stabilizers, it must fix a vertex: this follows from the fact that surface groups of negative Euler characteristic are cohomologically good \cite[Proposition 3.7]{gjz:good} combined with \cite[Corollary 1.11]{wz:decompositions}. \par \smallskip

    Wilton's and Zalesskii's argument from \cite[Theorem A]{wz:decompositions} applies to our setting. We briefly sketch their proof for completeness. Let $T$ be the Bass-Serre tree corresponding to the free splitting of $H$, and let $\widehat{T}$ be the corresponding profinite tree on which $\widehat{H}$ acts. $\widehat{G}$ also acts on this tree, and each $\widehat{S}_i$ must fix a vertex of $\widehat{T}$ and is therefore conjugate into some $\widehat{S}'_j$. Reversing the roles of $H$ and $G$ we get that every $\widehat{S}'_i$ is conjugate into some $\widehat{S}_j$. This implies that $n=m$, and up to reordering indices, $\widehat{S}_i \cong \widehat{S}'_i$. Since surface groups are distinguished from one another by their abelianizations, we have that $S_i \cong S'_i$. Finally, quotienting out each $\langle \langle \widehat{S}_i \rangle \rangle$ in $\widehat{G}$ and each $\langle \langle \widehat{S}'_i \rangle \rangle$ in $\widehat{H}$, we obtain that $\widehat{F} \cong \widehat{F'}$ which implies $F\cong F'$. \par \smallskip

    Suppose now that some of the $S_i$ are non-orientable, say, $S_1,\ldots,S_k$ are non-orientable and $S_{k+1},\ldots,S_n$ are orientable. Comparing the torsion part of $G^{\ab}$ and $H^{\ab}$ tells us that $m\ge k$ and that, up to reordering, $S'_1,\ldots,S'_k$ are non-orientable (while $S'_{k+1},\ldots,S'_m$ are orientable). Let $G'$ be the unique index-$2$ subgroup of $G$ such that $G'^{\ab}$ is torsion-free, and such that $G'$ has the maximal number of surface subgroups as free factors in its Grushko decomposition; denoting the orientable double cover $S_i$ by $T_i$ for $1\le i \le k$, we have that 
    \begin{equation} \label{surf_decomp}
    G'\cong T_1\ast \cdots \ast T_k \ast S_{k+1} \ast S_{k+1} \ast \cdots \ast S_n \ast S_n \ast F \ast F \ast F_{k-1}.
    \end{equation}
    Let $H'$ be the corresponding index-$2$ subgroup of $H$. It follows from the orientable case that $G'\cong H'$. Note in addition that $H'$ must be unique index-$2$ subgroup of $H$ that has a torsion-free abelianization, and such that it has the maximal number of surface subgroups as free factors in its Grushko decomposition: otherwise, there would exist an index-$2$ subgroup $G''$ of $G$ such that the number of surface subgroups in the Grushko decomposition of $G''$ is greater than that of $G'$. Since the numbers of non-orientable surfaces in $G$ and $H$ coincide, this tells us that $n=m$. \par \smallskip
    To finish, consider the $n-k$ index-$2$ subgroups of $G$, each obtained from the expression in \ref{surf_decomp} by replacing some $S_i \ast S_i$ with a double cover $T_i$ of $S_i$ for some $k<i\le n$ (and $F_{k-1}$ with $F_k$). The abelianization of each of these subgroups is torsion-free. Comparing these to the corresponding index-$2$ subgroups of $H$ as in the orientable case, shows that $G\cong H$.
    \end{proof}

\begin{rmk}
    We should point out that a similar argument implies that virtual free products of free and surface groups are profinitely recognized among hyperbolic graphs of virtually free groups with virtually cyclic edges. Indeed, if $G$ is virtually a free product of free and surface groups, then $G$ has a finite-index subgroup $G'\le G$ such that for every finite-index $H\le G$, the torsion part of $H^{\ab}$ is a direct sum of copies of $\mathbb{Z}/2\mathbb{Z}$. By \Cref{mainthm}, $H$ must also take a similar form. However, this result can not be strengthened to obtain a result similar to \Cref{thm:pf-rigid}: Bessa, Grunewald and Zalesskii gave examples of non-isomorphic virtually surface groups with the same profinite completion \cite[Section 2]{bessa:virsurface}.
\end{rmk}

Recall that as mentioned in the \nameref{intro}, a word $w$ in a free group $F_k$ is \emph{profinitely rigid} (in $F_k$) if its $\mathrm{Aut}(\widehat{F}_k)$-orbit in $\hat{F}_k$ meets $F_k$ precisely in its $\mathrm{Aut}(F_k)$ orbit, that is,
\[\mathrm{Aut}(\widehat{F}_k).w \cap F = \mathrm{Aut}(F_k).w.\]
\Cref{thm:pf-rigid} above implies that \emph{partial surface words} in $F_k$ are profinitely rigid, giving $\lceil 3k/2 \rceil -4$ new examples of non-power words which are profinitely rigid in $F_k$. 
\begin{cor}[\Cref{maincorwords}]
\label{cor:partial_words}
Let $w\in F_k=\langle x_1,\ldots,x_k \rangle$ be a partial surface word, that is $w$ has one of the following two forms:
\begin{enumerate}
    \item $w=[x_1,x_2]\cdots[x_{2n-1},x_{2n}]$ for $2n < k$, or
    \item $w=x_1^2\cdots x_n^2$ for $n < k$.
\end{enumerate}
Then $w$ is profinitely rigid in $F_k$. 
\end{cor}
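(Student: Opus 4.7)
The plan is to reduce \Cref{cor:partial_words} to \Cref{thm:pf-rigid} by associating to each partial surface word $w \in F_k$ its \emph{double} $D_w := F_k *_{\langle w \rangle} F_k$. I would first show that, for partial surface words $w$, the group $D_w$ is simultaneously a hyperbolic graph of free groups with cyclic edges (since $w$ is not a proper power, $\langle w \rangle$ is malnormal in $F_k$, so Bestvina--Feighn applies) and a free product of a (closed) surface group and a free group. Concretely, for $w = [x_1,x_2]\cdots[x_{2n-1},x_{2n}]$ with $2n<k$, write $F_k = F_{2n} * F_{k-2n}$ so that $w \in F_{2n}$; refining and collapsing in Bass--Serre theory yields
\[
D_w \;\cong\; \bigl(F_{2n} *_{\langle w \rangle} F_{2n}\bigr) * F_{k-2n} * F_{k-2n},
\]
and the middle amalgam is the double of the orientable genus-$n$ surface with one boundary along that boundary, hence the fundamental group of the closed orientable surface of genus $2n$. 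The non-orientable case $w = x_1^2\cdots x_n^2$ is analogous, producing the closed non-orientable surface of genus $2n$ as the central factor.

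Next, assume $w' \in F_k$ satisfies $\widehat{f}(w) = w'$ for some $\widehat{f} \in \mathrm{Aut}(\widehat{F}_k)$. By \cite[Theorem 1.7]{puder:commutator}, $w'$ is again not a proper power, so $D_{w'}$ is also a hyperbolic graph of free groups with cyclic edges. The automorphism $\widehat{f}$ sends $\widehat{\langle w \rangle}$ onto a conjugate of $\widehat{\langle w' \rangle}$ in $\widehat{F}_k$, and therefore induces an isomorphism $\widehat{D}_w \cong \widehat{D}_{w'}$; here one uses that profinite completion commutes with the amalgamation $F_k *_{\langle w \rangle} F_k$, which follows from the efficiency of virtually compact special hyperbolic graphs of free groups with cyclic edges. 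Applying \Cref{thm:pf-rigid} to $D_w$ gives $D_w \cong D_{w'}$, so $D_{w'}$ is likewise a free product of a closed surface group (of the same orientability and genus) and a free group of the same rank.

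The principal obstacle will be the final step: deducing from $D_w \cong D_{w'}$ as abstract groups that $w' \in \mathrm{Aut}(F_k).w$. The plan is to compare the two cyclic splittings $F_k *_{\langle w \rangle} F_k$ and $F_k *_{\langle w' \rangle} F_k$ of (isomorphic copies of) the same group $D_w$: both refine the Grushko decomposition $S * F$ in exactly the same fashion, each realizing the edge group as a simple closed curve on the closed surface Grushko factor $S$ that separates $S$ into two homeomorphic halves whose fundamental groups are identified with $F_{2n} \hookrightarrow F_k$ (respectively $F_n$ in the non-orientable case). By the change-of-coordinates principle for mapping class groups, any two such separating curves lie in a single mapping class group orbit; the Dehn--Nielsen--Baer theorem then upgrades the mapping class taking one curve to the other, combined with the identity on the remaining free Grushko factors, to an automorphism of $F_k$ sending $w$ to a conjugate of $w'$, which completes the proof.
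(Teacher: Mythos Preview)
Your proposal is correct and follows essentially the same route as the paper: form the double $D_w = F_k *_{\langle w\rangle} F_k$, observe it is a free product of a surface group and a free group, use that $w'$ in the same $\mathrm{Aut}(\widehat{F}_k)$-orbit forces $\widehat{D_w}\cong\widehat{D_{w'}}$, and then invoke \Cref{thm:pf-rigid}. The paper's own argument is extremely terse here---it simply cites \cite[Corollary 4]{wilton:words} for the passage from $D_w\cong D_{w'}$ back to $w'\in\mathrm{Aut}(F_k).w$---whereas you spell out that final step via Shenitzer-type reduction and the change-of-coordinates principle. One small wording issue: Dehn--Nielsen--Baer as stated gives you an automorphism of $\pi_1(S)$, not of $F_k$; what you actually use is that the mapping class taking one separating curve to the other restricts to an isomorphism between the two one-holed subsurfaces (each with fundamental group a rank-$2n$, resp.\ rank-$n$, free factor of $F_k$), and this isomorphism then extends over the complementary free factor to an automorphism of $F_k$. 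With that adjustment the argument is complete.
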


\begin{rmk}
    \cite[Theorem 1.7]{puder:commutator} implies that powers of partial surface words in $F_k$ are profinitely rigid. Moreover, \cite[Claim 2.5]{puder:commutator} shows that the automorphic orbits of partial surface words in $F_k$ are closed in the profinite topology on $F_k$.
\end{rmk}

The proof of \Cref{cor:partial_words} is essentially identical to \cite[Corollary 4]{wilton:words} (and \cite[Corollary E]{Wil18}), and relies on the fact that if $w'\in F_k$ lies in the $\mathrm{Aut}(\widehat{F}_k)$-orbit of $w$, then the profinite double $\widehat{F_k \ast_{w'} F_k}$ is isomorphic to $\widehat{F_k \ast_{w} F_k}$. Since $F_k \ast_{w} F_k$ is a free product of a free and a surface group, \Cref{thm:pf-rigid} yields the desired result. 

\bibliographystyle{plain}

\vspace{1cm}

(D. Ascari) \textsc{Department of Mathematics, University of the Basque Country, Barrio Sarriena, Leioa, 48940, Spain}

\emph{Email address:} \texttt{ascari.maths@gmail.com} \par \smallskip
\vspace{0.5cm}
(J. Fruchter) \textsc{Mathematisches Institut, Rheinische Friedrich-Wilhelms-Universit\"at Bonn, Endenicher Allee 60, 53115 Bonn, Germany}

\emph{Email address:} \texttt{fruchter@math.uni-bonn.de}
\end{document}